\newtheorem{thm}{Theorem}[section]
\newtheorem{lem}[thm]{Lemma}
\newtheorem{cor}[thm]{Corollary}
\newtheorem{prop}[thm]{Proposition}
\newtheorem{main}[thm]{Main Theorem}
\theoremstyle{definition}
\newtheorem{exmp}[thm]{Example}
\newtheorem{defn}[thm]{Definition}
\newcommand{\bbb}[1]{\ensuremath{\mathbb{#1}}}
\newcommand{\euc}{\bbb{E}}
\newcommand{\sph}{\bbb{S}}
\newcommand{\cat}{\textsc{CAT}}
\begin{document}

\title{Group Actions on $\cat(0)$ Simplicial $3$-Complexes}

\author[R.~Levitt]{Rena M.H. Levitt}
\address{Mathematics Department\\
        Pomona College\\
         610 North College Avenue\\
Claremont, CA 91711}
\email{rena.levitt@pomona.edu}

\date{\today}

\begin{abstract} The notions of nonpositive curved spaces and biautomatic groups are generalizations of the geometric properties of hyperbolic spaces and computational properties of their fundamental groups. Given the mutual origins of these conditions, one might conjecture that groups acting on nonpositive curved spaces are biautomatic. While the conventional wisdom is that counterexamples should exist, some groups acting on nonpositively curved spaces have been shown to be biautomatic. This article adds to the list of positive examples by proving that groups acting on $\cat(0)$ simplicial $3$-complexes are biautomatic.\end{abstract}

\maketitle

%%%%%%%%%%%%%%%%%%%%%%%%%%%%%%%%%%%
\section{Introduction}
%%%%%%%%%%%%%%%%%%%%%%%%%%%%%%%%%%%%

It is a well understood principle in geometric group theory that there is the close connection between the intrinsic
geometry topological space and the computational properties of its fundamental group. A prototypical example of
such a connection is the fact that a closed, compact $n$-dimensional Riemannian manifold with strictly negative sectional curvatures has a contractible universal cover with unique geodesics, and a fundamental group whose word problem can be solved in linear time.These properties are not independent of one another. The linear time solution to the word problem is a consequence of the coarse hyperbolic geometry of the universal cover. Both properties have been generalized. The geometric properties lead to the theory of nonpositively curved and $\cat(0)$ spaces  initiated by Cartan, Alexandrov, and Toponogov in the early part of the twentieth century. The computational properties inspired the theory of Gromov hyperbolic groups and then the more general theory of biautomatic groups  developed by Cannon, Epstein, Holt, Levy, Paterson, and Thurston in the late 1980s.

There is some evidence that these generalizations maintain their close connection. In particular, there are very restrictive classes
of nonpositively curved spaces that are known to have biautomatic fundamental groups. Two such classes are given by the following theorems.

\begin{thm}[Gersten-Short]\label{thm:gs1} Let $K$ be a $\cat(0)$ triangle complex. Then $\pi_1(K)$ is biautomatic. \end{thm}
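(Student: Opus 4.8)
The plan is to exhibit an explicit biautomatic structure on $G = \pi_1(K)$. Here $K$ is a compact nonpositively curved triangle complex, so $G$ acts properly, cocompactly and isometrically on the universal cover $X = \widetilde K$, a $\cat(0)$ triangle complex: the $2$-cells of $X$ are equilateral Euclidean triangles of side $1$, and each vertex link is a metric graph of girth $\ge 2\pi$ (combinatorially, girth $\ge 6$). Since the action is geometric, $G$ is finitely presented; fix a base vertex $v_0$ and a finite generating set $S$ arising from the edges of $K$, so that edge-paths in $X^{(1)}$ issuing from the orbit $Gv_0$ read words in $S$. By the standard fellow-traveler criterion for biautomaticity, it suffices to produce a regular language $L \subseteq S^\ast$ that maps onto $G$ and has the two-sided fellow-traveler property: there is a constant $k$ such that whenever $u, v \in L$ and $a, b \in S \cup \{\varepsilon\}$ satisfy $a\,\overline{u}\,b = \overline{v}$, the edge-paths $a{\cdot}u$ and $v{\cdot}b$ synchronously $k$-fellow-travel in $X^{(1)}$.

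One cannot take $L$ to be the full set of edge-geodesics: already in the Euclidean plane triangulated by equilateral triangles --- itself a $\cat(0)$ triangle complex --- two geodesics with the same endpoints can diverge linearly. Instead I would fix a $G$-equivariant choice of one geodesic word $\sigma_g$ for each $g \in G$, for definiteness the shortlex-minimal geodesic representative for a fixed ordering of $S$, and set $L = \{\sigma_g : g \in G\}$. Then $L$ maps onto $G$ and consists of geodesic words, and it remains to prove that $L$ is regular and that it fellow-travels. I would derive both from one geometric statement --- a \emph{thin-diagram lemma}: there is a constant $c$ such that every reduced disc diagram in $X$ whose boundary is a union of two geodesic arcs, or of two geodesic arcs and two single edges, is a strip of width at most $c$; that is, its two geodesic sides $c$-fellow-travel.

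Proving the thin-diagram lemma is the step I expect to be the main obstacle, and it is where the triangle structure and the girth hypothesis are used essentially, the analogous statement being false for general $\cat(0)$ complexes. The proof is a combinatorial-curvature estimate: to a reduced disc diagram $D$ over $X$ one applies the combinatorial Gauss--Bonnet theorem $2\pi\chi(D) = 2\pi$, written as a sum of local curvatures over the faces and vertices of $D$. With the piecewise-Euclidean conventions each equilateral face contributes nonpositively, and the girth condition forces every interior vertex of $D$ to have angle sum at least $2\pi$ and hence nonpositive local curvature; so the entire positive curvature $2\pi$ is carried by the boundary, concentrated at the few corners where a geodesic side meets a short side. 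Checking the finitely many local pictures at such a corner bounds the curvature it can contribute, and a counting argument along $D$ then shows that a geodesic side straying far from the other would force a sub-diagram with too much (negative) interior curvature, contradicting the identity; this yields the uniform width bound. Granting the lemma, the fellow-traveler property for $L$ follows by applying its quadrilateral case to $u$, $v$ and the edges $a$, $b$; and regularity follows by falsification-by-fellow-traveler: by the bigon case, any word that is not the shortlex-minimal representative of its value is synchronously $c$-fellow-traveled by some shortlex-smaller word with the same value, so $S^\ast \setminus L$ is recognized by a nondeterministic automaton that scans the input while guessing such a competitor within distance $c$, whence $L$, its complement, is regular.

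With $L$ regular, mapping onto $G$, and two-sided $k$-fellow-traveling, the biautomatic structure is assembled in the usual way: the word-acceptor is the automaton recognizing $L$, and the equality recognizer and the left- and right-multiplier automata are fiber products of copies of it whose states record the bounded synchronous displacement between the two tracks and the pending generator. This proves $\pi_1(K)$ is biautomatic. Everything outside the thin-diagram lemma is formal, so essentially all of the content of the theorem is concentrated in that lemma and its combinatorial Gauss--Bonnet proof.
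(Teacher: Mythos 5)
Your argument hinges entirely on the ``thin-diagram lemma,'' and that lemma is false, for the very reason you yourself mention in the previous sentence. The equilateral triangulation of $\euc^2$ is a $\cat(0)$ triangle complex, and in it two geodesic edge-paths with common endpoints can bound an arbitrarily fat flat reduced disc diagram (a parallelogram or hexagon of triangles). Your Gau\ss--Bonnet sketch does not rule this out: in such a diagram every face and every interior vertex contributes exactly zero curvature, the full $2\pi$ sits at the boundary corners, and no contradiction appears no matter how wide the diagram is. So ``all positive curvature is on the boundary'' does not yield a width bound, and with the lemma gone both halves of your argument collapse: the fellow-traveler property for your shortlex language (which you deduce from the quadrilateral case) and its regularity (which you deduce from the bigon case via falsification-by-fellow-traveler) are unsupported. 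Nothing in a curvature count can see the lexicographic selection, so shortlex would need a genuinely different, non-diagrammatic argument, and it is not the route Gersten and Short take.

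The actual proof (and its generalization in this paper to $\cat(0)$ simplicial $3$-complexes) replaces shortlex by a \emph{geometrically} canonical subfamily of geodesics: those satisfying the diagonal condition, i.e.\ combinatorial geodesics with no bad pairs $v_{i-1}\in\gamma_0(D)$, $v_i\in\gamma_+(D)$ on any minimal spanning disk --- informally, geodesics that move diagonally whenever possible. For \emph{this} family the disk-diagram curvature argument really does force thinness: if two such geodesics have endpoints at distance at most one, Combinatorial Gau\ss--Bonnet applied to a minimal spanning disk produces at least six positive chains on the boundary, the diagonal condition forces them to sit at the two short ends, and one gets a $(2,1)$-fellow-traveler bound (this is the Fellow Travelling theorem of Section \ref{sec:gsgeos}). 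Regularity is then obtained in two steps that avoid your bigon lemma altogether: every non-geodesic path is $(1,0)$-fellow-traveled by a strictly shorter path (Proposition \ref{prop:combfft}), so the geodesic language is regular by Neumann--Shapiro; and by cocompactness there are only finitely many local bad-pair configurations, each excluded by a finite state automaton, so the GS-geodesics form a regular language by closure under union, intersection and complement (Theorem \ref{thm:combreg}). If you want to salvage your write-up, replace the shortlex selection and the thin-diagram lemma by the diagonal condition and this two-step regularity argument; the final assembly of the biautomatic structure that you describe is then fine.
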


\begin{thm}[Januszkiewicz - \`{S}wi\c{a}tkowski]\label{thm:js} Groups acting geometrically on a systolic complexes are biautomatic.\end{thm}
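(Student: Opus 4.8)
Let $G$ act geometrically on a systolic complex $X$ and fix a base vertex $v_0$. By the Milnor--\v{S}varc lemma the orbit map $G \to X^{(1)}$ is a quasi-isometry, so the plan is to build a $G$-invariant combing of the $1$-skeleton $X^{(1)}$ (with its edge-path metric) that is regular and enjoys the two-sided fellow-traveler property, and then transport it to a biautomatic structure on $G$ using that $G$ acts simplicially, properly, and cocompactly. The combing I would use assigns to each vertex $w$ a set of \emph{$v_0$-directed geodesic} edge-paths from $v_0$ to $w$; the first task is to define these, and the second, harder, task is the fellow-traveler estimate.

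For the definition I would first establish the \emph{projection lemma} for systolic complexes: if $d(v_0,w)=n\ge 1$, the vertices of the sphere $S_{n-1}(v_0)$ adjacent to $w$ span a simplex $\pi(w)$, the parent simplex of $w$; more generally the parent map sends a simplex of $S_n(v_0)$ to a simplex of $S_{n-1}(v_0)$ and is $1$-Lipschitz for the combinatorial metric, and combinatorial balls $B_n(v_0)$ are convex in $X^{(1)}$. The proofs are the usual minimal disk-diagram arguments: a shortest offending configuration produces an empty square or pentagon in some link, contradicting local $6$-largeness. A directed geodesic from $v_0$ to $w$ is then a geodesic edge-path $v_0=x_0,x_1,\dots,x_n=w$ subject to a local ``directedness'' constraint relating the parent simplices of consecutive vertices, chosen so that each step makes maximal progress and so that only a bounded amount of back-tracking ambiguity survives. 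Because the choice available at step $i$ depends only on the simplicial-isomorphism type of a ball of fixed radius about $x_i$, of which there are finitely many up to the action of $G$, the set of directed-geodesic words is a regular language; one also checks that every vertex is joined to $v_0$ by such a path and that these paths are genuine geodesics of $X^{(1)}$.

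The main obstacle is the two-sided $k$-fellow-traveler property. For the ``right'' version one must show: if $\gamma,\gamma'$ are directed geodesics from $v_0$ whose endpoints satisfy $d(w,w')\le 1$, then $d(\gamma(t),\gamma'(t))\le k$ for all $t$ and a uniform $k$. I would prove this by downward induction on $t=n,n-1,\dots$, carrying as inductive hypothesis a bound not only on $d(\gamma(t),\gamma'(t))$ but also on the combinatorial distance between the parent simplices $\pi(\gamma(t))$ and $\pi(\gamma'(t))$: the $1$-Lipschitz property of the parent map propagates closeness of the simplices downward, convexity of balls keeps the two paths from diverging, and the directedness constraint together with local $6$-largeness forces the chosen predecessors $\gamma(t-1),\gamma'(t-1)$ to stay within the same bound rather than an amplified one. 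This no-amplification step is where the systolic hypothesis is really used, and it is the delicate point of the whole argument. The ``left'' version — comparing $\gamma$ with a directed geodesic based at a vertex adjacent to $v_0$ — reduces to a basepoint-change estimate, namely that moving $v_0$ to an adjacent vertex alters all parent simplices by a bounded amount, which follows from the same projection lemmas. Granted both fellow-traveler estimates and regularity, the standard automatic-groups machinery (Cannon, Epstein, Holt, Levy, Paterson, and Thurston) promotes the combing of $X^{(1)}$ to an automatic structure, two-sidedness makes it biautomatic, and $G$-equivariance together with the geometric action transfers the structure to $G$ itself.
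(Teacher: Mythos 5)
This theorem is not proved in the paper at all: it is imported verbatim from Januszkiewicz and \'{S}wi\c{a}tkowski \cite{JaSw06}, and the article only uses it (e.g.\ in the remark following Theorem \ref{thm:3simpsnpc}) rather than reproving it. Your outline is in fact a faithful sketch of the argument in the cited source: the projection/parent-simplex lemma and convexity of combinatorial balls, the definition of directed geodesics by a local condition on consecutive parent simplices, regularity from the fact that the local condition is detected in balls of bounded radius modulo the cocompact $G$-action, and the two-sided fellow-traveler estimate proved by induction along the paths while controlling the distance between parent simplices. So the approach is the right one and matches the actual proof of the quoted theorem.

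That said, what you have written is a plan rather than a proof, and you have placed the genuinely hard content exactly where you say it is: the precise formulation of the directedness constraint (it is not merely ``maximal progress with bounded back-tracking''; in \cite{JaSw06} it is a specific condition saying that each vertex together with the previous parent simplex spans, and the next parent simplex is determined as a projection, and uniqueness/existence of the directed geodesic between two vertices has to be proved), the no-amplification step in the downward induction for fellow traveling, and the basepoint-change estimate for the left-handed (biautomatic, not just automatic) version. Each of these requires its own minimal-disk or link argument using $6$-largeness, and none is supplied here. As a blind reconstruction of the strategy behind the cited theorem your proposal is accurate; as a self-contained proof it is incomplete at precisely those three points, so it cannot substitute for the citation.
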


The term triangle complex refers to a piecewise Euclidean $2$-complex where each $2$-cell is isometric to an equilateral triangle with unit side length. Gersten and Short's results was originally stated in the language of small cancelation theory, and has been restated them using the terminology of this article\cite{GeSh91}. Systolic complexes are the main object of study in Janusszkiewicz and \'{S}wi\c{a}tkowski's theory of simplicial nonpositive curvature, or SNPC. The SNPC condition is similar to the $\cat(0)$ property, but replaces the metric condition with a combinatorial condition on links\cite{JaSw06}. While $\cat(0)$ spaces and SNPC spaces share many features, neither condition implies the other in general. Both results rely on the existence of a canonical system of combinatorial paths. It remains an open question whether groups acting on arbitrary nonpositively curved spaces are biautomatic, although the conventional wisdom is that counterexamples should exist. 

This article adds to the examples of biautomatic groups acting on $\cat(0)$ spaces. The following theorem is the main result.

\begin{main}\label{main:bi}Groups acting properly on $\cat(0)$ simplicial $3$-complexes are biautomatic.
\end{main}

\noindent To prove the main theorem a path system that generalizes the set  of paths defined by Gersten and Short for $\cat(0)$ triangle complexes is shown to exist in $\cat(0)$ $3$-complexes. It relies on Katherine Crowley's proof on the existence of minimal spanning disks \cite{Cr08} and uses techniques developed by Jon McCammond and myself to analyze triangle-square complexes \cite{LeMc09}. In the interest of full disclosure, I would like to note that using the existence of spanning disks it can be shown that regular $\cat(0)$ simplicial $3$-complex are systolic and thus biautomatic by Theorem \ref{thm:js} (see Theorem \ref{thm:3simpsnpc}). The techniques used in this paper rely on the $\cat(0)$ structure of the complex, and are independent of  Janusszkiewicz and \'{S}wi\c{a}tkowski's argument.

This article is structured as follows. Section \ref{sec:geom3simp} is a brief review of the geometry of  $\cat(0)$ simplicial complexes. Section \ref{sec:combpaths} is devoted to the structure combinatorial paths and minimal spanning disks in simplicial $3$-complexes. The theory of biautomatic groups is reviewed in Section \ref{sec:bi}. Section \ref{sec:combgeo} is an analysis of the local structure of combinatorial geodesics in  $\cat(0)$ simplicial $3$-complexes. The canonical set of paths used to prove Main Theorem \ref{main:bi}is defined in Section \ref{sec:gsgeos}, and the final details of the proof are given in Section \ref{sec:biautomaticity}. Finally, the feasibility of extending the proof  of the main theorem to higher dimensions is discussed in Section \ref{sec:highdim}.

%%%%%%%%%%%%%%%%%%%%%%%%%%%%%%%%%%%%%%%%%%%%%%%%%%%%%%%%%%%%%%%%%%%%

\section{$\cat(0)$ Simplicial Complexes}\label{sec:geom3simp}

%%%%%%%%%%%%%%%%%%%%%%%%%%%%%%%%%%%%%%%%%%%%%%%%%%%%%%%%%%%%%%%%%%%%%%

This section is a brief review of piecewise euclidean and spherical simplicial complexes, and non positive curvature. Recall that a metric space is \emph{geodesic} if every pair of points is connected by a length minimizing path. 

\begin{defn}[Piecewise Euclidean and Spherical Complexes] A \emph{Euclidean polytope} is the convex hull of a finite set of points in euclidean space $\euc^{n}$. Similarly, a \emph{Spherical polytope} is the convex hull of a finite set of points contained in an open hemisphere of $\sph^{n}$. A \emph{piecewise Euclidean complex}, or \emph{PE-complex} is a cell complex built out of Euclidean polytopes glued together along faces by isometries.  A \emph{piecewise Spherical complex (PS-complex)} is a cell complex built out of spherical polytopes. A theorem of Martin Bridson's implies that piecewise Euclidean and Spherical complexes with finitely many cell isometry types are geodesic metric spaces \cite{BrHa99}. The dimension of a complex $K$ is the maximum of the dimensions of its cells if the maximum exists. If not, $K$ is infinite dimensional. The $n$-skeleton of a complex $K$, denoted $K^{(n)}$ is the union of $m$-cells of $K$ for $m \leq n$. A \emph{subcomplex} $L$ of a complex $K$ is a subset of $K$ that is also a complex. \end{defn} 

\begin{defn}[Simplicial Complex] An \emph{$n$-dimensional simplex} or \emph{$n$-simplex}  $\sigma$ is the convex hull of $n+1$ points in general linear position in $\euc^n$. A face of a simplex is the convex hull of a subset of the points defining $\sigma$. A $0$-simplex is a vertex, a $1$-simplex an edge, and a $2$-simplex a face. It can be useful to identify a simplex with its vertex set. If $\sigma^{(0)} = \{v_0, v_1, ..., v_n\}$, then the set $\{v_0, v_1, ..., v_n\}s$ \emph{spans} $\sigma$. A \emph{simplicial complex} is a piecewise Euclidean complex with each $n$-polytope isometric to an $n$-simplex. A simplicial complex is \emph{regular} if each edge has unit length. All simplicial complexes in this article are taken to be regular unless otherwise specified. A \emph{spherical simplicial complex} is a piecewise spherical complex with each cell isometric to a simplex in an open hemisphere of $\sph^{n}$. 
\end{defn}

 \begin{defn}[Flag and Full] A simplicial complex $K$ is \emph{flag} if every set of vertices pairwise connected by edges span a simplex of $K$. A subcomplex $L$ of a complex $K$ of is \emph{full} in $K$ if  $\sigma^{(0)} \subseteq L$ implies $\sigma \subseteq L$ for all $\sigma \subseteq K$. It immediately follows that Full subcomplexes of flag complexes are flag. \end{defn}

Intuitively, a $\cat(0)$ space is a geodesic metric with geodesic triangles ``thinner'' than their Euclidean counter parts.  For the purposes of this paper $\cat(0)$ will be defined in terms of Gromov's link condition. 

\begin{defn}[Metric Link] Let $\sigma$ be a $k$-face of an  $n$-simplex $\tau$. The \emph{metric link} of $\sigma$ in $\tau$ is  the set of unit tangent vectors orthogonal to $\sigma$ and pointing into $\tau$. This defines a spherical $(n-k-1)$-simplex. Let $\sigma$ be a cell of a simplicial complex $K$. The \emph{metric link} of $\sigma$ in $K$, denoted $lk_{K}(\sigma)$, is the spherical simplicial complex whose cells are the links of $\sigma$ in each $\tau \supseteq \sigma$ in $K$.  
\end{defn}

\begin{lem}\label{lem:linksfull} Let $L$ be a full subcomplex of a simplicial complex $K$. Then $lk_L(v)$ is full in $lk_K(v)$ for each vertex $v \subseteq L$. \end{lem}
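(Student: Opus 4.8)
The plan is to unwind the definition of the metric link and reduce the statement to the fullness of $L$ in $K$. The key observation to set up first is that the assignment $\tau \mapsto lk_\tau(v)$ is an inclusion-preserving bijection from the set of simplices of $K$ that contain $v$ onto the set of simplices of $lk_K(v)$: a simplex $\tau$ with vertex set $\{v, w_0, \ldots, w_k\}$ corresponds to a $k$-simplex of $lk_K(v)$, and in particular the vertices of $lk_K(v)$ correspond to the edges of $K$ containing $v$, equivalently to the vertices of $K$ adjacent to $v$. Since $L$ is a subcomplex of $K$ and $v \in L$, restricting this bijection to simplices of $L$ through $v$ identifies $lk_L(v)$ with a subcomplex of $lk_K(v)$, and a vertex of $lk_K(v)$ lies in $lk_L(v)$ precisely when the corresponding edge $[v,w]$ of $K$ lies in $L$, which (as $v \in L$) happens precisely when $w \in L$.

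With this dictionary in place I would argue as follows. Let $s$ be a simplex of $lk_K(v)$ whose vertex set $s^{(0)}$ is contained in $lk_L(v)$, and let $\tau$ be the simplex of $K$ with $lk_\tau(v) = s$, say $\tau^{(0)} = \{v, w_0, \ldots, w_k\}$. Each vertex of $s$ is the link of $v$ in one of the edges $[v, w_i]$, so the hypothesis $s^{(0)} \subseteq lk_L(v)$ forces each edge $[v, w_i]$ to lie in $L$, hence $w_i \in L$ for every $i$. Together with $v \in L$ this shows $\tau^{(0)} \subseteq L$. Because $L$ is full in $K$ and $\tau$ is a simplex of $K$ with all of its vertices in $L$, we conclude $\tau \subseteq L$, and therefore $s = lk_\tau(v) \subseteq lk_L(v)$. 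Since $s$ was an arbitrary simplex of $lk_K(v)$ spanned by vertices of $lk_L(v)$, this proves that $lk_L(v)$ is full in $lk_K(v)$.

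There is essentially no analytic or combinatorial obstacle here; the content is entirely in setting up the correspondence between simplices of $lk_K(v)$ and simplices of $K$ through $v$ correctly, and in checking that it sends ``being a vertex or simplex of $lk_L(v)$'' to ``being an edge or simplex of $L$ through $v$.'' The one point that deserves a sentence of care is that this identification respects the face relation in both directions, so that ``$s^{(0)} \subseteq lk_L(v)$'' translates exactly into ``$\tau^{(0)} \subseteq L$''; this is immediate from the definition of the metric link of $v$ in a simplex as the spherical simplex of unit tangent vectors at $v$ orthogonal to the face and pointing inward. Once that compatibility is recorded, the result is a direct application of the definition of fullness.
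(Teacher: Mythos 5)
Your proof is correct and follows essentially the same route as the paper's: identify vertices of $lk_K(v)$ lying in $lk_L(v)$ with edges of $L$ through $v$, pass to the simplex $\tau$ of $K$ spanned by $v$ and the opposite vertices $w_i$, and apply fullness of $L$ to conclude $\tau \subseteq L$ and hence $\sigma \subseteq lk_L(v)$. The only difference is that you spell out the simplex-to-link correspondence more explicitly, which the paper leaves implicit.
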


\begin{proof} Suppose that $\sigma^{(0)} = \{v_0,v_1,...v_n\}\subseteq lk_L(v)$ for $\sigma \subseteq lk_K(v)$. Each vertex $v_i$ corresponds to an edge $e_i$ in $L$. Let $w_i$ be the vertex opposite $v$ on $e_i$ for $i = 0...n$. The existence of $\sigma \subseteq lk_K(v)$ implies that $\{v,w_1,w_2,...,w_n\}$ span a simplex $\tau$ in $K$. By fullness $\tau$ is contained in $L$. Thus $\sigma \subseteq lk_L(v)$.
\end{proof}

\begin{defn}[Nonpositively Curved, $\cat(0)$] Let $X$ be a geodesic metric space. A \emph{locally geodesic loop} $\rho$ is an embedding of a metric circle into $X$ satisfying the following property; at each point $x$ on the image of $\rho$ the angle between the incoming and outgoing tangent vectors of $\rho$ in $lk_{X}(x)$ is at least $\pi$. A piecewise Euclidean complex with finitely many isometry types of cells is \emph{nonpositively  curved} if the link of each cell contains no locally geodesic loops of length less than $2\pi$. If in addition $X$ is connected and simply connected, then $X$ is $\cat(0)$. \end{defn}

The intrinsic metric on a $\cat(0)$ space is convex, and geodesics are unique. The $\cat(0)$ and nonpositivel curvature conditions describe the behavior of metric links. Simplicial nonpositive curvature replaces the metric condition with a combinatorial condition on combinatorial links,

\begin{defn}[Star, Closed Star] The \emph {star of a simplex} $\sigma \subseteq K$, denoted $st(\sigma)$, is the union of the interiors of simplices $\tau$ containing $\sigma$. The \emph{closed star of $\sigma$} is $\bar{st}(\sigma) = \bigcup \{ \tau \subseteq K | \sigma \subseteq \tau \}$. \end{defn}

\begin{defn}[Combinatorial Link] The \emph{combinatorial link} of a simplex $\sigma$ in a simplicial complex $K$, denoted $clk(\sigma)$ is the union of all simplicies $\tau$ of $K$ such that $\sigma \ast \tau$ is a simplex of $K$. Thus $clk(\sigma) = \bar{st}(\sigma) \setminus st(\sigma)$.
\end{defn}

\begin{lem}\label{lem:clkfull} Let $\sigma$ be a simplex of $K$. Then $clk(\sigma)$ is a full subcomplex of a flag complex $K$.\end{lem}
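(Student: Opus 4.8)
The plan is to argue in two stages: first that $clk(\sigma)$ is a subcomplex of $K$, and then that it is full, the latter being where the flag hypothesis on $K$ enters.

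For the subcomplex claim, I would take a simplex $\tau$ with $\sigma \ast \tau$ a simplex of $K$ and a face $\tau' \subseteq \tau$; then $\sigma \ast \tau'$ is a face of the simplex $\sigma \ast \tau$, hence itself a simplex of $K$, so $\tau' \subseteq clk(\sigma)$. Thus $clk(\sigma)$ is closed under passing to faces, and since by definition it is a union of simplices of $K$, it is a subcomplex. Note also that the identity $clk(\sigma) = \bar{st}(\sigma)\setminus st(\sigma)$ shows no vertex of $clk(\sigma)$ lies in $\sigma$, which is exactly what makes every join $\sigma \ast \tau$ appearing below legitimate.

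For fullness, suppose $\rho$ is a simplex of $K$ with $\rho^{(0)} \subseteq clk(\sigma)$; I must show $\rho \subseteq clk(\sigma)$, that is, that $\sigma \ast \rho$ is a simplex of $K$. Write $\sigma^{(0)} = \{v_0,\dots,v_k\}$ and $\rho^{(0)} = \{u_0,\dots,u_m\}$. The $v_i$ are pairwise joined by edges because $\sigma$ is a simplex; the $u_j$ are pairwise joined by edges because $\rho$ is a simplex; and each $u_j$ is joined to each $v_i$ by an edge because $u_j \in clk(\sigma)$ forces $\sigma \ast u_j$ to be a simplex of $K$, which contains the edge spanned by $v_i$ and $u_j$. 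Hence the vertex set $\sigma^{(0)} \cup \rho^{(0)}$ is pairwise connected by edges of $K$, so by the flag condition it spans a simplex of $K$; that simplex is precisely $\sigma \ast \rho$. Therefore $\rho \subseteq clk(\sigma)$, and $clk(\sigma)$ is full.

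No step here is a genuine obstacle; the only points requiring care are the bookkeeping around the join — checking that $\sigma$ and $\rho$ share no vertices so that $\sigma \ast \rho$ is defined, and that ``$\sigma^{(0)} \cup \rho^{(0)}$ spans a simplex'' is literally the statement ``$\sigma \ast \rho$ is a simplex'' — and both follow immediately from the definitions together with $clk(\sigma) \subseteq \bar{st}(\sigma)\setminus st(\sigma)$. One could alternatively phrase the fullness step by an argument parallel to Lemma~\ref{lem:linksfull}, but the direct appeal to flagness is shorter.
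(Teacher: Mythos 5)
Your proof is correct and follows essentially the same route as the paper: the fullness step is exactly the paper's argument that the vertices of $\sigma^{(0)} \cup \tau^{(0)}$ are pairwise joined by edges (using that each vertex of $\tau$ lies in $clk(\sigma)$), so flagness gives the simplex $\sigma \ast \tau$ and hence $\tau \subseteq clk(\sigma)$. Your additional verification that $clk(\sigma)$ is closed under faces, and the careful bookkeeping that $\sigma$ and $\tau$ share no vertices, are details the paper leaves implicit but are entirely consistent with it.
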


\begin{proof} Suppose $\tau^{(0)} \subseteq clk(\sigma)$ for a simplex $\tau \subseteq K$. Then the set of vertices $\tau^{(0)} \cup \sigma^{(0)}$ are pairwise connected by an edge. As $K$ is flag, this implies $\tau \ast \sigma$ is a simplex of $K$. Thus $\tau \subseteq clk(\sigma)$. \end{proof}

\begin{defn}[Simplicial Nonpositive Curvature] A simplicial complex $K$ is \emph{$k$-large} if $K$ is flag and contains no empty $n$-gons for $n<k$. A simplicial complex $K$ satisfies the \emph{simplicial nonpositive curvature} condition, or \emph{$\textsc{SNPC}$},  if $clk_{K}(\sigma)$ is $6$-large  for each simplex $\sigma \subseteq K$. Simplicial complexes satisfying the SNPC condtion are \emph{systolic}. \end{defn}

%%%%%%%%%%%%%%%%%%%%%%%%%%%%%%%%%%%%%%%%%%%%%%%%%%%%%%%%%%%%%%%%%%%%%%%%%%%%%%%%%%%%%%%%

\section{Combinatorial Paths and Spanning Disks}\label{sec:combpaths}

%%%%%%%%%%%%%%%%%%%%%%%%%%%%%%%%%%%%%%%%%%%%%%%%%%%%%%%%%%%%%%%%%%%%%%%%%%%%%%%%%%%%%%%%
This section is a discussion combinatorial paths and disks in $\cat(0)$ simplicial complexes, including Katherine Crowley's result for spanning disks in $3$-complexes. The end of the section includes some useful consequences of her result.

\begin{defn}[Combinatorial Path, and Combinatorial Distance] A \emph{combinatorial path} is an alternating sequence of vertices and edges  $\alpha = [v_0,e_1,v_1, \\ e_2,..., v_{n-1},e_n, v_{n}]$ such that $e_i$ is spanned $\{v_{i-1},v_{i}\}$ for $1 \leq i <k$. The path is a \emph{loop} if $v_0 = v_n$. A combinatorial path is \emph{tight} if it does not cross the same edge twice. The length of a combinatorial path $\ell(\alpha)$ is the number of edges it crosses. The \emph{combinatorial distance} between two vertices $v$ and $w$,  denoted $d_c(v,w)$, is the minimum of the set of lengths of combinatorial path from $v$ to $w$. This defines a metric on $K^{(0)}$. A combinatorial path $\gamma$  from $v$ to $w$ is a \emph{combinatorial geodesic} if $\ell(\gamma) = d_c(v,w)$. Combinatorial geodesics are not unique.\end{defn}

\begin{defn}[Disk Diagram] A \emph{disk diagram} is a contractible $2$-complex that embeds in $\euc^2$. This embedding is often implicit. A disk diagram is \emph{nonsingular} if it is homeomorphic to the closed unit disk. Otherwise the disk contains a \emph{cut point} whose removal disconnects the diagram. Disks with cut points are called \emph{singular}. Singular disks may broken up into nonsingular subdisks. The boundary of a disk diagram is a combinatorial loop read clockwise around the outside of the disk. This may be ambiguous for a singular disk, where the boundary is determined by giving an explicit embedding in the plane. Suppose $D$ is a simplicial disk, i.e. $D$ has triangular faces. The \emph{combinatorial area} of $D$ is the number of faces contained in $D$.\end{defn}

\begin{defn}[Spanning Disk] Let $D$ be a simplicial disk diagram, $K$ be a simplicial complex, and $f: D \longrightarrow K$ a cellular map. Then $\alpha= f(\partial D)$  is a combinatorial loop in  $K$. In this case, $D$ \emph{spans} $\alpha$ in $K$ and $\alpha$ bounds $D$. \end{defn}

\begin{defn}[Vertex Degree] Let $D$ be a simplicial disk diagram. The \emph{degree} of a vertex $v$, denoted $deg_D(v)$ or simply $deg(v)$, is the number of edges sharing $v$ as a vertex. For vertices on the interior of $D$, this is equivalent to the number of faces with $v$ as a vertex. For boundary vertices, this is one more than the number of faces with $v$ as a vertex.  By Gromov's link condition a disk is $\cat(0)$ if and only if each interior vertex is contained in at least six triangles. \end{defn}

Many of the results in this article involve analyzing the structure of spanning disks. The Combinatorial Gau\ss-Bonnet Theorem will be used in many of these arguments. This is a classical result, a proof can be found in \cite{Cr08}.

\begin{thm}[Combinatorial Gau\ss-Bonnet] Let $D$ be a triangulated disk. Then
\begin{equation*}
    \sum_{v \in \partial D} (4-deg(v)) + \sum_{v \in intD} (6-deg(v)) = 6.
\end{equation*}
\end{thm}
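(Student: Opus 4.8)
The plan is to deduce the identity from Euler's formula together with a count of face--edge incidences. Write $V$, $E$, and $F$ for the numbers of vertices, edges, and triangular faces of $D$, and split $V = V_b + V_i$ and $E = E_b + E_i$ according to whether a vertex or edge lies on $\partial D$ or in the interior. Since a disk is contractible, $V - E + F = 1$.

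I would then record two elementary counts. First, each triangle has three sides, each interior edge is shared by exactly two triangles, and each boundary edge lies on exactly one triangle, so $3F = 2E_i + E_b$. Second, reading around the boundary loop, boundary edges and boundary vertices alternate around an embedded circle, so $V_b = E_b$. Combining these with $V - E + F = 1$ --- substitute $F = (2E_i + E_b)/3$, clear denominators, and use $E_i = E - E_b = E - V_b$ --- collapses Euler's relation to $2V_b + 3V_i - E = 3$, hence $4V_b + 6V_i - 2E = 6$ after doubling.

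It remains to see that the left-hand side of the asserted equation equals $4V_b + 6V_i - 2E$. Expanding,
\[
\sum_{v \in \partial D}(4 - deg(v)) + \sum_{v \in \mathrm{int}\,D}(6 - deg(v)) = 4V_b + 6V_i - \sum_{v} deg(v) = 4V_b + 6V_i - 2E,
\]
where the last equality is the handshake identity $\sum_{v} deg(v) = 2E$. Together with the computation above, this yields the value $6$.

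The one point that needs care is the step $V_b = E_b$, which uses that $\partial D$ is a single embedded circle, i.e.\ the nonsingular case. For a general disk diagram the boundary is a closed walk that revisits each cut point, and I would handle this either by applying the nonsingular case to each nonsingular subdisk and checking that the contributions reassemble, or simply by counting boundary vertices with multiplicity along the boundary walk, which restores $V_b = E_b$ and leaves the rest of the argument untouched. An alternative proof by induction on $F$ is also available --- the base case is a single triangle, $3(4-2) = 6$, and the inductive step deletes a triangle meeting $\partial D$ along one or two edges --- but it requires a short case analysis and attention to cut points created by the deletion, so I would present the Euler-characteristic computation as the main line.
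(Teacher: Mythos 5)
Your argument is correct, and in fact the paper does not prove this statement at all: it labels it a classical result and defers to Crowley \cite{Cr08}, so your Euler-characteristic computation supplies a self-contained proof rather than duplicating one. The algebra checks out: from $V-E+F=1$, $3F=2E_i+E_b$, and $V_b=E_b$ one gets $E=2V_b+3V_i-3$, and since the paper's $deg(v)$ is the number of edges at $v$, the handshake identity $\sum_v deg(v)=2E$ turns the left-hand side into $4V_b+6V_i-2E=6$ exactly as you say. Your caveat about singular diagrams is the right one to flag, but it is harmless here: the paper only invokes the formula for nonsingular disks (Lemma \ref{lem:+vecount}) and for Crowley's minimal spanning disks (Corollary \ref{cor:bdcount}), and elsewhere singular diagrams are explicitly broken into nonsingular subdisks before such counts are applied, so the nonsingular case you prove is all that is used; if you wanted the general statement, counting boundary vertices with multiplicity along the boundary walk, as you suggest, is the standard fix. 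The induction on $F$ you mention would also work, but the Euler-characteristic route is shorter and is the usual proof of combinatorial Gau\ss-Bonnet, so presenting it as the main line is a good choice.
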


By contractibility, each combinatorial loop bounds a spanning disk. The following theorem of Katherine Crowley's  determines the structure of these disks in $\cat(0)$ simplicial $3$-complexes \cite{Cr08}.

\begin{thm}[Crowley, Spanning Disks] \label{thm:spanningdisks} Let $K$ be a $\cat(0)$ simplicial $3$-complex, $\alpha$ be a combinatorial loop in $K^{(1)}$. Then there exists a $\cat(0)$ disk $D$  contained in $K$ of minimal combinatorial area such that $\partial D = \alpha$.
\end{thm}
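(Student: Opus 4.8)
The plan is to obtain $D$ by minimizing combinatorial area and then to show that minimality forces $D$ to be nonpositively curved. Since $K$ is $\cat(0)$ it is contractible, so (as noted just before the theorem) $\alpha$ bounds at least one simplicial spanning disk; among all cellular maps $f:D\to K$ with $f(\partial D)=\alpha$, fix one for which the combinatorial area of $D$ is smallest. If $D$ is singular, decompose it at its cut points into nonsingular subdisks and argue about each piece, so we may assume $D$ is nonsingular. Minimality immediately makes $D$ \emph{reduced}: no two triangles of $D$ sharing an edge are carried by $f$ to the same triangle of $K$ by a reflection across that edge, since folding such a pair together produces a strictly smaller disk bounding $\alpha$.

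The heart of the argument is the claim that for a minimal reduced $D$ every interior vertex lies in at least six triangles; by the Vertex Degree discussion (Gromov's link condition) this is precisely the assertion that $D$ is $\cat(0)$. Suppose instead that an interior vertex $v$ lies in $k\le 5$ triangles. Then $lk_D(v)$ is a $k$-cycle, and $f$ carries it to a combinatorial loop $\rho$ in the spherical complex $lk_K(f(v))$; since each triangle of $K$ is equilateral with unit edges, $\rho$ has metric length $k\pi/3\le 5\pi/3<2\pi$. (If $\rho$ is not embedded, the self-intersection yields an area-reducing cut of $D$, so we may assume $\rho$ is embedded.) As $K$ is $\cat(0)$, the link of the vertex $f(v)$ contains no locally geodesic loop of length less than $2\pi$, so $\rho$ must turn by strictly less than $\pi$ at some vertex $w$. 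Now $w$ corresponds to an edge $e$ of $K$ through $f(v)$, and $lk_{lk_K(f(v))}(w)$ is the edge link $lk_K(e)$, which --- because all cells of $K$ are regular --- is a metric graph all of whose edges have length $\arccos\tfrac13$ and which, being the link of a cell of a $\cat(0)$ complex, has systole at least $2\pi$. A turn of size less than $\pi$ at $w$ therefore means that the two triangles of $D$ incident to $v$ that contribute the edges of $\rho$ at $w$ are sent by $f$ to triangles of $K$ at distance $1$ or $2$ in $lk_K(e)$: they either coincide (impossible, since $D$ is reduced), are two faces of a common tetrahedron of $K$, or are the ends of a chain of two tetrahedra glued along a face.

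Using the tetrahedron so produced, one modifies $D$ near $v$ without increasing its area. In the common-tetrahedron case this is a single diagonal flip across that tetrahedron: it replaces the two triangles of $D$ meeting along the edge over $e$ by the other two faces of the tetrahedron, leaving the area unchanged while decreasing $\deg_D(v)$ by one. Iterating brings $\deg_D(v)$ down to $3$, and then the three triangles around $v$ have boundary loop mapping to a $3$-cycle of $K^{(1)}$, which by flagness of $K$ bounds a single triangle; replacing those three triangles by that one strictly reduces the area, contradicting minimality. (In the chain-of-two-tetrahedra case one first composes a short sequence of moves through the two tetrahedra to reduce to the previous situation; Combinatorial Gau\ss-Bonnet is a convenient tool for controlling the bookkeeping and termination.) Hence no interior vertex of $D$ has degree below $6$, so $D$ is a $\cat(0)$ disk of minimal combinatorial area bounding $\alpha$. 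That $D$ may moreover be taken embedded in $K$, hence a genuine subcomplex, follows from the usual cut-and-paste argument at a point of non-injectivity of $f$ together with uniqueness of geodesics in the $\cat(0)$ complex $K$; alternatively one reads ``contained in $K$'' as referring to the disk diagram $f:D\to K$ itself.

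I expect the real difficulty to be the local-modification step of the third paragraph: one must check that each move is globally well defined (it must not disturb $\partial D$ when $v$ lies near the boundary, and it must interact correctly when several low-degree interior vertices are adjacent), that it strictly decreases the combinatorial area or an appropriate lexicographic potential built from the area and the number of such vertices, and that the process terminates. Coupled to this is the case analysis of precisely how a turn of angle less than $\pi$ in $lk_K(f(v))$ can arise --- enumerating the tetrahedra or chains of tetrahedra that force it and discarding the harmless configurations using the $\cat(1)$ geometry of the edge links $lk_K(e)$ --- which is where the regularity of the simplicial metric is used essentially, and which is the delicate combinatorial-geometric core of the proof.
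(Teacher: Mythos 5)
This theorem is not proved in the paper at all: it is imported verbatim from Crowley \cite{Cr08}, so there is no internal argument to compare yours against. Judged on its own terms, your reconstruction has the right general shape --- take a minimal-area spanning disk, and show that an interior vertex of degree $k\le 5$ produces an edge loop of length $k\pi/3<2\pi$ in the spherical link of its image, which by the $\cat(0)$ link condition must turn by less than $\pi$ at some vertex $w$; since $lk_{lk_K(f(v))}(w)\cong lk_K(e)$ is a graph with all edges of length $\arccos\tfrac13$, that turn forces the two adjacent disk triangles to map to triangles of $K$ at distance $0$, $1$, or $2$ in $lk_K(e)$. That computation is correct, and the distance-$1$ case is handled correctly by the area-preserving diagonal flip inside the common tetrahedron.

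The gaps, however, are exactly where you flag them, and they are genuine rather than routine. First, in the distance-$2$ (chain of two tetrahedra) case there is no flip available: the quadrilateral $u,a,v,b$ in the diagram is already triangulated along its only guaranteed diagonal $e=\{f(u),f(v)\}$, and the edge $\{f(a),f(b)\}$ need not exist; you cannot appeal to the no-empty-square property here, because in this paper that property (Proposition \ref{prop:noemptytrisqpent}) is itself deduced from the spanning-disk theorem, so invoking it would be circular. ``A short sequence of moves through the two tetrahedra'' is not a construction, and this is precisely the hard case the theorem exists to resolve. Second, termination is not secured: your flip lowers $\deg_D(u)$ by one, so it can create a new interior vertex of degree below six, and the process can in principle wander; a genuine potential function (or a different global argument, as in Crowley's proof) is needed, not just ``Gau\ss-Bonnet for bookkeeping.'' Third, two auxiliary claims are asserted without proof: that a non-embedded link loop $\rho$ yields an area-reducing surgery, and that the final disk can be taken embedded, i.e.\ genuinely ``contained in $K$'' as the statement requires --- reading the conclusion as merely a disk diagram weakens the theorem as used later. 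As it stands the proposal is a plausible strategy sketch in the spirit of the cited result, but not a proof.
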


\begin{cor}\label{cor:bdcount} Let $D$ be a minimal minimal spanning disk. Then  \begin{equation*} \sum_{v \in \partial D} (4-deg(v)) \geq 6. \end{equation*} \end{cor}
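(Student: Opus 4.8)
The plan is to read the conclusion straight off the Combinatorial Gau\ss-Bonnet Theorem once the interior vertices have been controlled. By Crowley's Theorem~\ref{thm:spanningdisks} the minimal spanning disk $D$ may be taken to be $\cat(0)$, and by the link characterisation recorded in the definition of vertex degree this forces every interior vertex of $D$ to lie in at least six triangles. Since for an interior vertex $v$ the degree $deg_D(v)$ equals the number of triangles containing $v$, this says $deg_D(v)\ge 6$, and hence $6-deg_D(v)\le 0$, for each $v$ in the interior of $D$.

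Next I would substitute this bound into Gau\ss-Bonnet. Every summand of $\sum_{v\in int D}(6-deg(v))$ is non-positive, so that sum is $\le 0$, and the identity
\[
\sum_{v\in\partial D}(4-deg(v))+\sum_{v\in int D}(6-deg(v))=6
\]
rearranges to $\sum_{v\in\partial D}(4-deg(v))=6-\sum_{v\in int D}(6-deg(v))\ge 6$, which is exactly the asserted inequality. (If one wishes to be careful about singular disks, one notes that the $\cat(0)$ disk supplied by Theorem~\ref{thm:spanningdisks} is a genuine triangulated disk, so the Combinatorial Gau\ss-Bonnet Theorem applies in the form stated above.)

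I do not expect a real obstacle here: the entire content is carried by Theorem~\ref{thm:spanningdisks}, which furnishes a minimal-area spanning disk that is actually $\cat(0)$, and by Gromov's link condition in the already-stated ``at least six triangles'' form. The only point worth a remark is why the hypothesis that $D$ is minimal is needed at all — it enters solely to license the passage to Crowley's $\cat(0)$ disk, after which the estimate on the interior sum, and hence the corollary, is automatic.
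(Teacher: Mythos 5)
Your proof is correct and is essentially the paper's own argument: both invoke Crowley's Theorem~\ref{thm:spanningdisks} to ensure every interior vertex has degree at least six, so the interior sum in Combinatorial Gau\ss-Bonnet is nonpositive and the boundary sum must be at least $6$. (Your version is in fact slightly more careful, since the paper writes the interior sum as strictly negative where nonpositive is what is actually needed.)
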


\begin{proof}Theorem \ref{thm:spanningdisks} implies that the degree of each interior vertex is at least six. Thus $\sum_{v \in intD} (6-deg(v)) <0$. \end{proof}

\begin{defn}[Empty $n$-gon]  A \emph{combinatorial $n$-gon} or \emph{$n$-gon} is a tight combinatorial loop of length $n$.  A $n$-gon $\alpha$ is \emph{empty} if the minimal disk spanning $\alpha$ has an interior vertex. For example, an empty triangle is a loop of length three not spanned by a face, an empty square is a loop of length four not spanned by two faces sharing an edge, and an empty pentagon is a loop of length five not is not spanned by faces. \end{defn} 

\begin{prop}\label{prop:noemptytrisqpent} $\cat(0)$ simplicial $3$-complexes contain no empty triangles, squares or pentagons. \end{prop}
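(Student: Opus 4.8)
The plan is to reduce the proposition to a single statement about abstract triangulated disks, and then to prove that statement by a short induction driven by the Combinatorial Gau\ss--Bonnet Theorem. The statement I would isolate is: \emph{if $D$ is a nonsingular triangulated disk each of whose interior vertices lies in at least six faces (equivalently, $D$ is $\cat(0)$) and whose boundary is an embedded cycle of length $n\le 5$, then $D$ has no interior vertex}; an Euler characteristic count then shows that $D$ has exactly $n-2$ faces. Granting this, the proposition is immediate. An empty $n$-gon $\alpha$ with $3\le n\le 5$ is a tight loop, so by Theorem~\ref{thm:spanningdisks} it bounds a $\cat(0)$ disk $D$ of minimal combinatorial area; moreover, as $\ell(\alpha)\le 5$ and $\alpha$ is tight, $D$ is nonsingular (else $D$ would split along a cut vertex into nonsingular pieces of total boundary length $\ell(\alpha)\le 5$, yet each such piece has boundary length at least $3$). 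Since $D$ is $\cat(0)$, the Gromov link condition gives that each interior vertex of $D$ lies in at least six faces, so $D$ satisfies the hypotheses of the isolated statement; hence $D$ has no interior vertex and exactly $n-2$ faces. For $n=3,4,5$ this exhibits $\alpha$ as spanned respectively by a single face, by two faces sharing an edge, or by three faces with no interior vertex --- each of which contradicts the definition of an empty $n$-gon.

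To prove the isolated statement I would induct on $n\ge 3$, supposing for contradiction that $D$ has an interior vertex. Since every interior vertex has degree at least $6$, the sum $\sum_{v\in\mathrm{int}\,D}(6-\deg v)$ is nonpositive, so Gau\ss--Bonnet forces $\sum_{v\in\partial D}(4-\deg v)\ge 6$. In the base case $n=3$, each of the three boundary vertices has degree at least $2$ (as $D$ contains a face), so each term $4-\deg v$ is at most $2$; since there are only three terms, their sum is at least $6$ only if every boundary vertex has degree exactly $2$. Then each boundary vertex is incident to no edges beyond its two boundary edges, so every edge of $D$ joins two of the three boundary vertices, and by connectedness $D$ is the single triangle on those vertices --- which has no interior vertex, a contradiction.

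For the inductive step, let $n\in\{4,5\}$. If every boundary vertex had degree at least $3$, then $\sum_{v\in\partial D}(4-\deg v)\le n\le 5<6$, a contradiction; hence some boundary vertex $v$ has degree exactly $2$ and lies in a unique face $T=\{v,a,b\}$, where $a$ and $b$ are the two boundary neighbors of $v$. Because $n\ge 4$, the vertices $a$ and $b$ are not consecutive along $\partial D$, so $\overline{ab}$ is not a boundary edge and $T$ shares it with a second face. Deleting $T$, the vertex $v$, and the edges $\overline{va}$ and $\overline{vb}$ then yields a nonsingular triangulated disk $D'$ whose boundary is the embedded $(n-1)$-cycle obtained from $\partial D$ by replacing the subpath $a,v,b$ with the edge $\overline{ab}$; the interior vertices of $D'$ are exactly those of $D$, with unchanged links and hence each still lying in at least six faces, and $D'$ still contains the interior vertex of $D$. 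This contradicts the inductive hypothesis, completing the proof.

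The step I expect to take the most care is topological bookkeeping rather than any computation: checking that a minimal-area disk over a short tight loop may be taken nonsingular, that the ear-removal genuinely produces a nonsingular disk without collapsing it to something lower-dimensional (this is exactly where the presence of an interior vertex, which forces at least six faces, gets used), and that it preserves embeddedness of the boundary cycle --- for which one needs both $n\le 5$, so that the loop and its pieces cannot be too long, and $n\ge 4$, so that the boundary neighbors $a,b$ of the degree-two vertex are non-adjacent and $\overline{ab}$ is genuinely a new boundary edge of $D'$.
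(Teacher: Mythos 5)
Your proposal is correct and runs on the same machinery as the paper's proof: Crowley's theorem to obtain a $\cat(0)$ minimal spanning disk, the Combinatorial Gau\ss--Bonnet count to force a degree-two boundary vertex when the boundary has length at most five, and removal of the resulting ear to shorten the boundary. The only difference is packaging --- you induct on the abstract disk diagram, whereas the paper argues via an empty polygon of minimal length in $K$ and concludes $n \geq 6$ --- so the approach is essentially the same.
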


\begin{proof} Let $\alpha = \{v_0, e_0, v_1, ..., v_{n} = v_0\}$ be an empty $n$-gon of minimal length,  $D$ be the minimal disk spanning $\alpha$. Since $\alpha$ is tight, $deg(v_i) >1$ for each $i$. If $deg(v_i) = 2$, then $v_i$ lies on a single triangle in $D$. Removing the triangle incident $v_i$ gives an $(n-1)$-gon $\alpha'$ spanned by $D' $ containing the same interior vertex as $D$, contradicting the choice of $\alpha$. Thus  $deg(v_i) \geq 3$ for all $i = 0, ..., n-1$. This implies $$ n  = \sum_{i=0}^{n-1} 1 \geq \sum_{i=0}^{n-1} (4-deg(v_i)) \geq 6.$$ \end{proof} 

\begin{lem}\label{lem:fullnotempty} Full subcomplexes inherit the no empty triangles, squares and pentagons conditions.
\end{lem}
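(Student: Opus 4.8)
The plan is to argue by contradiction: suppose $L$ is a full subcomplex of a simplicial complex $K$, where $K$ has no empty triangles, squares, or pentagons, but $L$ contains an empty $n$-gon $\alpha$ for some $n \in \{3,4,5\}$. The key point is that "emptiness" is detected by a minimal spanning disk having an interior vertex, and I want to compare the minimal spanning disk of $\alpha$ computed in $L$ with one computed in $K$.

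First I would note that $\alpha$, being a combinatorial loop in $L^{(1)} \subseteq K^{(1)}$, bounds a minimal spanning disk $D_K$ in $K$; since $n \le 5$ and $K$ has no empty $n$-gons, $D_K$ has no interior vertices, i.e. $D_K$ is a "thin" triangulated polygon whose vertices are precisely the vertices of $\alpha$. Concretely this means all the vertices appearing in $D_K$ already lie in $L$, and the triangles of $D_K$ are spanned by triples of vertices of $\alpha$. Here is where fullness enters: each such triple is pairwise joined by edges of $D_K$, hence by edges of $K$; but I need those edges — and the triangles they span — to lie in $L$. Fullness gives exactly this, provided the relevant edges lie in $L$. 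So the second step is to check that every edge used in $D_K$ is an edge of $L$. For $n=3$ this is immediate ($D_K$ is a single face on the three vertices of $\alpha$, all edges are edges of $\alpha \subseteq L$, so fullness puts the face in $L$). For $n=4,5$, a diagonal edge of $D_K$ joins two vertices of $\alpha$ that lie in $L$, but $L$ being full means any simplex of $K$ spanned by vertices of $L$ lies in $L$ — so the diagonal edge, and then each triangle (spanned by three vertices of $L$), is in $L$. Thus $D_K$ is actually a spanning disk for $\alpha$ inside $L$, with no interior vertex, contradicting that $\alpha$ was empty in $L$.

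The step I expect to require the most care is the bookkeeping that a minimal-area spanning disk of a triangle, square, or pentagon in a complex with no empty $\le 5$-gons genuinely has all of its vertices among the boundary vertices — equivalently, that Proposition \ref{prop:noemptytrisqpent}'s "no interior vertex" conclusion is what I am entitled to invoke, and that no vertex of $D_K$ is a repeated or interior vertex forcing an edge not spanned by boundary vertices. Once that is pinned down, the argument is just: all vertices of $D_K$ lie in $L$, hence by fullness all simplices of $D_K$ lie in $L$, hence $\alpha$ bounds a disk with no interior vertex in $L$, so $\alpha$ is not empty — contradiction. I would also remark that the same reasoning shows full subcomplexes inherit flagness (already observed in the excerpt), so combined with the above, full subcomplexes of $\cat(0)$ simplicial $3$-complexes inherit every hypothesis needed downstream.
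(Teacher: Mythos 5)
Your argument is correct and is essentially the paper's own proof: the paper likewise observes that the loop is filled in $K$ by a disk with no interior vertex (a face for a triangle, two faces sharing a diagonal for a square, and similarly for a pentagon), so every diagonal edge joins two vertices of $L$ and fullness pulls the diagonals and then the faces into $L$. Your phrasing via a minimal spanning disk and a contradiction is just a repackaging of that same step, so no further comment is needed.
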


\begin{proof} Let $L$ be a full subcomplex of a simplicial complex $K$ with no empty triangles, squares or pentagons. The no triangle condition follows immediately from the definition of full. Let $c = [v_1,v_2,v_3,v_4,v_1]$ be a closed combinatorial path of length four in $L$. Then $c$ is filled by a disk consisting of two triangular faces in $K$. Thus either $v_1$ and $v_3$, or $v_2$ and $v_4$ span an edge $e$ in $K$. The fullness condition implies $e$, and hence both faces, are contained in $L$. The same argument shows $L$ has no empty pentagons.
\end{proof}

\begin{thm}\label{thm:cat(0)flag} If $K$ is a $\cat(0)$ simplicial $3$-complex then $K$ is flag.
\end{thm}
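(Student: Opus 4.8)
The plan is to reduce the flag condition for a $3$-complex to a statement about vertex links. Since $\dim K\le 3$, a simplicial complex $K$ is flag exactly when (a) every three pairwise-adjacent vertices span a $2$-simplex, (b) every four pairwise-adjacent vertices span a $3$-simplex, and (c) no five vertices are pairwise adjacent. Part (a) is immediate from Proposition~\ref{prop:noemptytrisqpent}: three pairwise-adjacent vertices form a $3$-gon, and a $3$-gon that is not empty is spanned by a single face. The content lies in (b) and (c).

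Granting (a), I would deduce both (b) and (c) from the single claim that $lk_K(v)$ is flag for every vertex $v$. If $v_0,v_1,v_2,v_3$ are pairwise adjacent then by (a) each triple $v_0v_iv_j$ spans a $2$-simplex, so in $lk_K(v_0)$ the three vertices determined by the edges $v_0v_i$ are pairwise joined by the edges determined by these $2$-simplices; since $lk_K(v_0)$ is flag, this $3$-clique spans a triangle, which is precisely the $3$-simplex $v_0v_1v_2v_3$, giving (b). Likewise, a $5$-clique in $K$ would produce a $4$-clique in $lk_K(v_0)$, which flagness would force to span a $3$-simplex of $lk_K(v_0)$ -- impossible, since $\dim lk_K(v)\le 2$ -- giving (c).

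It remains to establish the claim, and since $K$ is $\cat(0)$ and regular, $lk_K(v)$ is a $\cat(1)$ piecewise spherical simplicial complex of dimension at most $2$ all of whose edges have length $\pi/3$ (and whose triangles are spherical equilateral triangles of angle $\arccos(1/3)$, the dihedral angle of a regular tetrahedron). For such an $L$, I would first note that ``no $4$-clique'' follows from ``no empty triangle'': once every $3$-cycle of $L$ bounds a triangle, a $4$-clique of $L$ spans a copy of $\partial\Delta^3$, and then the link in $L$ of one of its vertices -- a $\cat(1)$ metric graph, hence of girth at least $2\pi/\arccos(1/3)>5$, so at least $6$ -- would contain a $3$-cycle of length $3\arccos(1/3)<2\pi$, which is forbidden. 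So everything comes down to showing $L$ has no empty triangle. Given a $3$-cycle $\gamma=(u_1u_2u_3)$ of length $\pi$ in $L$ that bounds no triangle, the $\cat(1)$ condition forces $\gamma$ not to be a closed local geodesic, so at some $u_i$ the angle between the two edges of $\gamma$ at $u_i$ is strictly less than $\pi$; since that angle equals the distance in the $\cat(1)$ graph $lk_L(u_i)$ between the two vertices determined by those edges, and $lk_L(u_i)$ has girth at least $6$ with edges of length $\arccos(1/3)$, those two vertices are at combinatorial distance $1$ or $2$. Distance $1$ is exactly the triangle we want; distance $2$ produces a fourth vertex $u_w$ of $L$ together with triangles $u_1u_2u_w$ and $u_1u_3u_w$.

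The obstacle is to rule out this ``distance two'' configuration, i.e.\ to complete the no-empty-triangle statement for these $\cat(1)$ spherical complexes. I expect this parallels the Gau\ss-Bonnet and minimal-disk arguments of Section~\ref{sec:combpaths}: when $\gamma$ bounds a disk in $L$, pass to one of minimal combinatorial area, observe that the link of any interior vertex is a cycle of $\arccos(1/3)$-edges that must have length at least $2\pi$, so such vertices have valence at least $6$, and contradict a spherical Gau\ss-Bonnet identity for disks assembled from spherical triangles of angle $\arccos(1/3)$ -- the spherical analogue of Corollary~\ref{cor:bdcount}. The two genuine difficulties here are establishing that $\gamma$ does bound a disk in $L$ (which needs an argument, as vertex links need not be simply connected) and the spherical counterpart of Crowley's reduction showing the minimal disk has no interior vertex of low valence; as elsewhere in Section~\ref{sec:combpaths}, the regularity of $K$ -- all edge lengths and all angles fixed -- is what makes the relevant counting available.
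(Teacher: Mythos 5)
There is a genuine gap, and you have located it yourself: everything in your reduction hinges on the claim that $lk_K(v)$ has no empty triangles, which (given the no-empty-triangle condition of Proposition~\ref{prop:noemptytrisqpent} in $K$) is exactly the statement that four pairwise-adjacent vertices of $K$ span a tetrahedron. That statement is the real content of Theorem~\ref{thm:cat(0)flag}, and your proposal does not prove it. The ``distance two'' configuration you isolate cannot be excluded by the local tools you invoke: Crowley's Theorem~\ref{thm:spanningdisks} is a statement about minimal disks in the $\cat(0)$ piecewise Euclidean $3$-complex $K$, not about its piecewise spherical vertex links, and no spherical analogue is available in this paper; moreover, as you note, a short cycle in $lk_K(v)$ need not bound any disk inside the link, since links need not be simply connected. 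So the proposed ``spherical Gau\ss-Bonnet plus minimal disk'' step is an unestablished conjecture sitting exactly where the proof needs an argument, and purely link-local counting will not settle it --- compare the positively curved spanning disks of Example~\ref{exmp:baddisk}, which show how badly local configurations can behave once one leaves the setting where a global filling argument is available.

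The paper closes this gap globally rather than locally: having filled all triangles by Proposition~\ref{prop:noemptytrisqpent}, it observes that a $4$-clique yields an embedded copy of the $2$-skeleton of a tetrahedron, i.e.\ a $2$-sphere in the contractible $\cat(0)$ space $K$, and uses the geodesic contraction (which strictly decreases distances) to conclude that this sphere must bound a $3$-simplex. Only then does it rule out $5$-cliques, by noting that an interior edge of the resulting complex would lie in three tetrahedra, producing a loop of length $3\arccos(\tfrac13)<2\pi$ in the edge link, contradicting $\cat(0)$ --- this last step is essentially the same girth computation as your ``no $4$-clique in the link'' argument. If you want to salvage your link-based outline, you must replace the missing step with some such global argument (contractibility of $K$, or an appeal to the $\cat(0)$ filling of the sphere); as written, the central implication is asserted, not proved.
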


\begin{proof}
Suppose that the vertices $v_1,v_2,...,v_n$ are pairwise connected by an edge in $K$. Proposition \ref{prop:noemptytrisqpent} implies that each triplet of vertices from the set $V = \{v_1,v_2,...v_n\}$ span a face. Let $W = \{w_1,w_2,w_3,w_4\}$ be a $4$-tuple of points of $V$. Then the full subcomplex of $K$ with vertex set $W$ contains the $2$-skeleton of a tetrahedron. As $\cat(0)$ spaces are contractible and contractions strictly reduce distance\cite{Bo92}, $W$ spans a tetrahedron of $K$. Let $U = \{u_1,u_2,u_3,u_4,u_5\} \subseteq V$ be distinct. Then the $1$-skeleton of the full subcomplex of $K$ with vertex set $U$ is a $K_5$ graph. Any $4$-tuple of points in $U$ span a tetrahedron. Let $e$ be an interior edge of the subcomplex. Then $e$ is contained in exactly three tetrahedra. Then $lk_K(e)$ contains a loop $c$ of three edges corresponding the three tetrahedra. The length of each edge is the dihedral angle at $e$ in each tetrahedron, $\arccos(\frac{1}{3})$. Thus $\ell(c) = 3\arccos(\frac{1}{3}) < \frac{3\pi}{2}$, a contradiction to the $\cat(0)$ condition. This implies that there can be at most four distinct vertices pairwise connected by edges and they must span a simplex of $K$. Thus $K$ is flag. 
\end{proof}

Theorem \ref{thm:cat(0)flag} together with Proposition \ref{prop:noemptytrisqpent}, Lemma \ref{lem:clkfull} and Lemma \ref{lem:fullnotempty} imply the following result.

\begin{thm}\label{thm:3simpsnpc} $\cat(0)$ simplicial $3$-complexes are systolic. \end{thm}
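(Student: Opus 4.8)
The plan is to assemble the four preceding results in the correct logical order. Recall that a simplicial complex is $6$-large precisely when it is flag and contains no empty $n$-gon for $n<6$, i.e. no empty triangle, square, or pentagon, and that $K$ is systolic precisely when $clk_K(\sigma)$ is $6$-large for every simplex $\sigma\subseteq K$. So for each such $\sigma$ I must verify two things: that $clk_K(\sigma)$ is flag, and that it contains no empty triangles, squares, or pentagons. (Since a $\cat(0)$ complex is contractible, any connectivity hypotheses implicit in the definition of systolic are automatic, so it suffices to check the link condition.)

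First I would record that $K$ itself is $6$-large: it is flag by Theorem~\ref{thm:cat(0)flag}, and it contains no empty triangles, squares, or pentagons by Proposition~\ref{prop:noemptytrisqpent}. Besides covering the link of the empty simplex, this is exactly the hypothesis needed to run the remaining argument, since both Lemma~\ref{lem:clkfull} and the observation that full subcomplexes of flag complexes are flag presuppose that $K$ is flag.

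Now fix an arbitrary simplex $\sigma\subseteq K$. By Lemma~\ref{lem:clkfull}, $clk(\sigma)$ is a full subcomplex of $K$; since $K$ is flag and full subcomplexes of flag complexes are flag, $clk(\sigma)$ is flag. (When $\sigma$ is a triangle, $clk(\sigma)$ is a discrete set of vertices, and when $\sigma$ is a tetrahedron it is empty, so the only substantive cases are $\sigma$ a vertex, where $clk(\sigma)$ is a $2$-complex, and $\sigma$ an edge, where $clk(\sigma)$ is a graph.) For the empty-polygon conditions, Lemma~\ref{lem:fullnotempty} applies directly: $clk(\sigma)$ is full in $K$, and $K$ has no empty triangles, squares, or pentagons by Proposition~\ref{prop:noemptytrisqpent}, so $clk(\sigma)$ inherits all three properties. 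Hence $clk(\sigma)$ is $6$-large, and as $\sigma$ was arbitrary, $K$ is systolic.

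There is no genuine obstacle at this stage; the real work has already been carried out in establishing Theorem~\ref{thm:cat(0)flag} (and, behind it, Crowley's spanning-disk theorem, which drives Proposition~\ref{prop:noemptytrisqpent}). The only point requiring care is the ordering of the implications: the flag property of $K$ must be in hand before invoking Lemma~\ref{lem:clkfull} and ``full-in-flag is flag,'' and the absence of empty triangles, squares, and pentagons in $K$ must be in hand before invoking Lemma~\ref{lem:fullnotempty}.
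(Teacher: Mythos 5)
Your proposal is correct and is exactly the argument the paper intends: the paper proves this theorem simply by citing Theorem \ref{thm:cat(0)flag}, Proposition \ref{prop:noemptytrisqpent}, Lemma \ref{lem:clkfull}, and Lemma \ref{lem:fullnotempty}, and your write-up just makes that assembly explicit (fullness of $clk(\sigma)$ gives flagness and inheritance of the no empty triangle, square, and pentagon conditions). No gaps; this matches the paper's proof.
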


%%%%%%%%%%%%%%%%%%%%%%%%%%%%%%%%%%%%%%%%%%%%%%%%

\section{Biautomatic Groups}\label{sec:bi}

%%%%%%%%%%%%%%%%%%%%%%%%%%%%%%%%%%%%%%%%%%%%%%%%

This section is a brief overview of biautomatic groups. Essentially, a group is biautomatic if there exist a set of regular paths in the Cayley graph of the group obeying a distance condition. This is equivalent to the existence of a set of finite state automata that completely determine computation the group. See \emph{Word Processing in Groups} by David Epstein, et. al. \cite{ECHLPT92} for a thorough treatment of the subject. For the purposes of this paper, biautomicity will be defined in terms of path systems in graphs. The definition given in this section was developed by Jacek \'{S}wi\c{a}tkowski, and is shown to be equivalent to the traditional formulation in \cite{Sw06}. Note that the notions of combinatorial path and combinatorial distance may be extended to any connected graph by viewing the graph as a $1$-complex.

\begin{defn}[Parameterizing Paths] Let $\alpha=[v_0, v_1, ..., v_n]$ be a combinatorial path in a graph $\Gamma$. Then $\alpha$ may be thought of as a simplicial map from a subdivided interval into $\Gamma$. Define $\alpha(t)$ to be $v_0$ when $t \leq 0$, the point $t$ units from the $v_0$ along $\alpha$ when $0 \leq t \leq \ell(\alpha)$,  and $v_n$ when $t \geq \ell(\alpha)$. \end{defn}

\begin{defn}[Distance between Paths] . The \emph{path distance} between $\alpha$ and $\beta$ is maximum of the distance between $\alpha(t)$ and $\beta(t)$ for all $t$.\end{defn}

\begin{defn}[Path System] Let $P_{\Gamma}$ be the set of all paths in a graph $\Gamma$. A \emph{path system} is a subset $\mathcal{P}$ of $P_{\Gamma}$. The \emph{geodesic path system}  of $\Gamma$ is the set of geodesics in  $P_{\Gamma}$. \end{defn}

\begin{defn}[$(k,l)$-Fellow Travel] A path system $\mathcal{P}$ \emph{$(k,l)$-fellow travels} if given pair of paths $\alpha, \beta \in \mathcal{P}$ whose  endpoints are at most $l$ apart, the path distance between $\alpha$ and $\beta$ is at most $k$. If such a $k$ and $l$ exist, then $\mathcal{P}$ satisfies the fellow traveller property.\end{defn}

\begin{defn}[FSA and Regularity] A \emph{finite state automata}, or FSA, is a finite, labeled, directed graph $M$ with decorations. The vertex set of $M$ has a distinguished start vertex, and a set of distinguished accept vertices. A path is accepted by $M$ if it travels from the start vertex to an accept vertex. The set of paths accepted by $M$ is the \emph{path system induced by $M$} denoted $\mathcal{P}(M)$. A path system is \emph{regular} if it is accepted by some FSA. \end{defn}

\begin{thm}[Combing Regular Languages \cite{ECHLPT92}]\label{thm:combreg} Let $\mathcal{P}$ and $\mathcal{P'}$ be regular path systems. Then $\mathcal{P} \cup \mathcal{P'}$, $\mathcal{P} \cap \mathcal{P'}$, and the compliment of $\mathcal{P}$ are also regular path systems. \end{thm}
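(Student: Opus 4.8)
The plan is to transcribe the classical proof that regular languages are closed under the Boolean operations into the language of path systems; since the statement is attributed to \cite{ECHLPT92}, I would cite it there and include the following sketch only for completeness. Throughout, the relevant alphabet is the finite set of edge labels of $\Gamma$, and a path of $\Gamma$ lies in $\mathcal{P}(M)$ precisely when its label-sequence is the label-sequence of a path of $M$ from the start vertex to an accept vertex; note that $P_\Gamma$ itself is induced by the one-vertex FSA that loops on every label, so $P_\Gamma$ is regular.

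First I would reduce to the deterministic setting. The definition of an FSA above does not require the transition structure to be deterministic, so I would first pass, via the classical subset (powerset) construction, from $M$ to an FSA $\widehat{M}$ whose underlying labeled graph is deterministic and complete --- exactly one edge with each label leaves each vertex --- and check that this does not change the induced path system, i.e. $\mathcal{P}(\widehat{M}) = \mathcal{P}(M)$. The only cost is that $\widehat{M}$ has at most $2^{|M|}$ vertices, still finite, so regularity is preserved.

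Next, given deterministic complete FSAs $M$ and $M'$ over the common alphabet, I would form the product FSA $M \otimes M'$: its vertex set is the product of the two vertex sets, its start vertex the pair of start vertices, and a label-$a$ edge runs from $(p,q)$ to $(p',q')$ exactly when $M$ has a label-$a$ edge $p \to p'$ and $M'$ one $q \to q'$. A path of $\Gamma$ is accepted by $M \otimes M'$ iff it is accepted by both $M$ and $M'$: declaring the accept vertices to be the pairs $(p,q)$ with $p$ accepting in $M$ and $q$ accepting in $M'$ realizes $\mathcal{P} \cap \mathcal{P}'$, while declaring the accept vertices to be those pairs with $p$ accepting \emph{or} $q$ accepting realizes $\mathcal{P} \cup \mathcal{P}'$ (alternatively, union follows from intersection and complement by De Morgan). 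For the complement, I would take the deterministic complete FSA $\widehat{M}$ for $\mathcal{P}$ and interchange accept and non-accept vertices; determinism and completeness guarantee that every path of $\Gamma$ labels a unique path of $\widehat{M}$ from the start vertex, so the swapped FSA accepts exactly $P_\Gamma \setminus \mathcal{P}$.

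I expect the only genuine subtlety to be the complement step: one must be explicit that the complement is taken inside $P_\Gamma$ rather than inside the set of all label-strings, and one must not skip the determinization, since interchanging the accept states of a non-deterministic FSA does not in general complement its language. The product and union constructions, and the bookkeeping that finiteness is preserved throughout ($|M|\cdot|M'|$ vertices for the product, $2^{|M|}$ for the subset construction), are routine.
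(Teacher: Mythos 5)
The paper offers no proof of this statement at all---it is quoted directly from \cite{ECHLPT92}---and your argument is the standard one from that source: determinize by the subset construction, use the product automaton for intersection and union, and swap accept states of the deterministic complete machine for the complement taken inside $P_\Gamma$. Your proof is correct (and appropriately careful about the two genuine subtleties, determinizing before complementation and complementing relative to $P_\Gamma$ rather than all label strings), so it matches the intended argument and nothing further is needed.
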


The following theorem is often used to show that path systems of geodesics are regular. It has been restated in the terminology of this paper.

\begin{thm}[Neumann-Shapiro]\label{thm:fft} Let $\Gamma$ be a graph such that each non geodesic path of $\Gamma$ is $(k,0)$-fellow traveled by a strictly shorter path. Then the geodesic path system of $\Gamma$ is regular. \end{thm}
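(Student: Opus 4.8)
The plan is to recognize the geodesics indirectly: build a (nondeterministic) finite automaton that accepts exactly the \emph{non}-geodesic paths by using the hypothesis to guess, step by step, the promised shorter fellow traveler, and then obtain the geodesic path system as a complement via Theorem~\ref{thm:combreg}.

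First I would fix the characterization of non-geodesicity to be exploited. A path $\alpha=[v_0,\dots,v_n]$ is non-geodesic precisely when there is a strictly shorter path from $v_0$ to $v_n$, and by hypothesis such a path $\beta$ may in addition be chosen with $d(\alpha(t),\beta(t))\le k$ for every $t$. Since $\alpha$ and $\beta$ are parametrized by arc length and extended by constants, the inequality $\ell(\beta)<\ell(\alpha)$ forces $\beta$ to sit at its terminal vertex --- to \emph{stall} --- for at least one unit of time. Hence $\alpha$ is non-geodesic if and only if there is a walk $\beta(0),\beta(1),\dots,\beta(n)$ with $\beta(0)=v_0$, $\beta(n)=v_n$, consecutive entries equal or adjacent, $d(v_t,\beta(t))\le k$ for all $t$, and at least one stall.

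Next I would construct a finite automaton $M$ accepting exactly the non-geodesic paths. Reading $\alpha$ one edge at a time, $M$ nondeterministically guesses $\beta$ one step at a time; after step $t$ its state records only (i) the isomorphism type of the ball of radius $k+1$ about the current vertex $v_t$, marked with the guessed vertex $\beta(t)$ --- which is meaningful because the fellow-traveling bound keeps $\beta(t)$ inside that ball --- and (ii) one bit recording whether a stall has already occurred. When $\alpha$ crosses the edge $v_tv_{t+1}$, the marked ball about $v_{t+1}$ and the legal positions for $\beta(t+1)$ (equal or adjacent to $\beta(t)$, and, as $M$ enforces, still within distance $k$ of $v_{t+1}$) are all determined by data inside the ball about $v_t$; the bit is switched on the first stall. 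The start state marks the ball about $v_0$ with $v_0$ and has the bit off, and a state is accepting exactly when the marked vertex is the ball's center and the bit is on. A run of $M$ on $\alpha$ then corresponds precisely to a walk $\beta$ as above, so $M$ accepts the non-geodesic paths and nothing else. Because the graphs in question have only finitely many isomorphism types of balls of a fixed radius (this is part of the setting of \cite{Sw06} and holds in all the applications), $M$ has finitely many states, so the non-geodesic path system is regular.

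Finally, the geodesic path system is the complement of the non-geodesic one, and hence regular by Theorem~\ref{thm:combreg}, which is exactly the conclusion. I expect the one delicate point to be the finiteness of $M$: one must check that nothing beyond the position of $\beta$ relative to the running endpoint of $\alpha$ has to be remembered, and it is precisely the $(k,0)$-fellow-traveling bound that confines that relative position to a ball of fixed radius, so that --- given finitely many local types --- the state set is finite. Everything else (maintaining the guessed walk, detecting the stall, matching the endpoints) is routine bookkeeping once the alphabet and state set are set up as above.
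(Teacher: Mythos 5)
The paper never proves Theorem~\ref{thm:fft}: it is imported from Neumann--Shapiro (and recast in \'{S}wi\c{a}tkowski's path-system language \cite{Sw06}), so there is no internal argument to compare against. What you propose is precisely the standard ``word-difference'' proof behind that theorem: characterize non-geodesics as paths admitting a $k$-fellow-traveling walk with the same endpoints and at least one stall, recognize these by nondeterministically guessing the walk while remembering only the guessed vertex's position relative to the moving endpoint of $\alpha$ plus a stall bit, and then pass to the complement using Theorem~\ref{thm:combreg}. Your reduction of non-geodesicity to the existence of a stalled walk is correct (the fellow-traveling hypothesis is only needed in the forward direction, which is exactly what bounds the search), so the outline is sound. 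Two caveats on the automaton itself. First, the marked-ball bookkeeping does not quite close up as stated: the $(k+1)$-ball about $v_{t+1}$ is not determined by the marked $(k+1)$-ball about $v_t$ (a vertex at distance $k+1$ from $v_{t+1}$ may lie at distance $k+2$ from $v_t$), and enlarging the radius only pushes the problem one step further; the new local type must come from the input, not from the old state. The clean fix, and the one matching \'{S}wi\c{a}tkowski's definition of regularity, is to take as states the $G$-orbits of pairs $(v,w)$ with $d_c(v,w)\le k$ together with the stall bit, with transitions read off from the finite quotient data and paths in $\mathcal{P}$ obtained as lifts. Second, as you yourself flag, finiteness of the state set is not a consequence of the hypotheses as literally stated for a bare graph $\Gamma$; it requires finitely many local types, which in this paper's application is supplied by the geometric action of $G$ on $\Gamma=K^{(1)}$, and the orbit-of-pairs formulation makes that finiteness automatic. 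With those adjustments your argument is a correct proof of the theorem, essentially the original one.
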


\begin{defn}[Geometric Action] A group $G$ acts on a space $X$ \emph{geometrically} if the action is properly discontinuous, cocompact, and by isometries. \end{defn}

\begin{defn}[Biautomatic Group] Suppose a group $G$ acts geometrically on a graph $\Gamma$. Then $G$ is \emph{biautomatic} if there exist a path system $\mathcal{P}$ satisfying the following conditions.

\begin{enumerate}

\item $\mathcal{P}$ is $G$-invariant and regular.
\item There exist a vertex $v_0 \in \Gamma$ such that each path starts and ends at a vertex of the orbit $G(v_0)$ and $\mathcal{P}$ is transitive on $G(v_0)$.
\item $\mathcal{P}$ satisfies the fellow traveller property.

\end{enumerate}
\end{defn}

To show a group acting on a complex $K$ is biautomatic, one must find a path system in $\Gamma = K^{(1)}$ satisfying each of the above properties For $\cat(0)$ simplicial $3$-complexes, the path system used will be a canonical subset of the combinatorial geodesics in the $1$-skeleton of $K$. The definition of the system relies on determining the structure combinatorial geodesics between pairs of vertices.

%%%%%%%%%%%%%%%%%%%%%%%%%%%%%%%%%%%%%%%%%%%%%%%%%%%%%%%%%%%%%%%%%

\section{The Structure Combinatorial Geodesics}\label{sec:combgeo}

%%%%%%%%%%%%%%%%%%%%%%%%%%%%%%%%%%%%%%%%%%%%%%%%%%%%%%%%%%%%%%%%%

This section is devoted to the local structure of combinatorial geodesics. Spanning disks provide a concise method to move between a combinatorial path and a combinatorial geodesic with common endpoints by pushing across small disks. The machinery discussed in this section was developed by Jon McCammond and myself to determine the structure of combinatorial geodesics in triangle-square complexes \cite{LeMc09}. The terminology matches the original formulation, while the results and proofs have been reworked to apply to $\cat(0)$ simplicial $3$-complexes.

\begin{defn}[Doubly-Based Disk] A doubly based based disk diagram $D$ is a nonsingular disk diagram with two distinguished vertices, a start vertex  $u$ and  an end vertex $v$. This breaks the boundary of $D$ into two paths, one travelling clockwise from $u$ to $v$ and one traveling counterclockwise from $u$ to $v$. The clockwise path around $D$ is referred to as the \emph{old path}, and the counterclockwise path is the \emph{new path}. If both the old and new paths are both combinatorial geodesics, then $D$ is a \emph{geodesic} disk. \end{defn}

\begin{defn}[Moves] A \emph{move} is a doubly based diagram $D$ together with a map simplicial map $D \rightarrow K$. If the old path of $D$ is mapped to a directed subpath of $\gamma$ in $K$, then \emph{applying the move} $D$ to $\gamma$ is replacing the image of the old path in $\gamma$ with the image of the new path to obtain the path $\gamma'$ (see Figure\ref{fig:moves}). If $\ell(\gamma') = \ell(\gamma)$ then the move is length preserving, and if $\ell(\gamma') <\ell(\gamma)$ the move is length reducing.\end{defn}

%%%%%%%%%%%%%%%%%%%%%%%%%%%%%%
\begin{center}

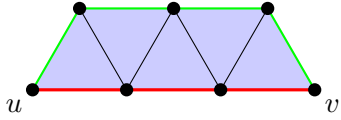
\begin{figure}

\begin{tikzpicture}[scale=1.25]

\coordinate (A0) at (0,0);
\coordinate (A1) at (1,0);
\coordinate (A2) at (2,0);
\coordinate (A3) at (3,0);

\coordinate (B0) at (0.5, 0.5*3^{0.5});
\coordinate (B1) at (1.5, 0.5*3^{0.5});
\coordinate (B2) at (2.5, 0.5*3^{0.5});

\filldraw[blue!20] (A0)--(A3)--(B2)--(B0)--cycle;

\draw[green, thick] (A0)--(B0)--(B2)--(A3);
\draw[red, very thick] (A0)--(A3);

\draw[thin] (B0)--(A1)--(B1)--(A2)--(B2);

\fill[black] (A0) circle (2pt) node[anchor=north east] {$u$};
\fill[black] (A1) circle (2pt);
\fill[black] (A2) circle (2pt);
\fill[black] (A3) circle (2pt) node[anchor=north west] {$v$};

\fill[black] (B0) circle (2pt);
\fill[black] (B1) circle (2pt);
\fill[black] (B2) circle (2pt);

\end{tikzpicture}

\caption{\label{fig:moves} An example of a move. The old upper green path is sent to the new lower red path. This move is length reducing.}

\end{figure}

\end{center}
%%%%%%%%%%%%%%%%%%%%%%%%%%%%%%%%

In triangulated disks, there are three basic moves; trivial moves, triangle moves, and triangle-triangle moves (see Figure \ref{fig:basicmoves}). In a trivial move $u=v$, the disk is a single edge, the old path crosses the edge twice, and the new path is the constant path $u$. In a \emph{triangle move} the disk is a single triangle, old path consists of two sides of the triangle, and the new path is the third side. Finally, in a \emph{triangle-triangle move} the disk consists of two triangles sharing an edge, $u$ and $v$ are the opposing degree two vertices, the old path is clockwise path from $u$ to $v$, and the new path is the counterclockwise  path from $u$ to $v$. Trivial and triangle moves are length reducing, triangle-triangle moves are length preserving.  

%%%%%%%%%%%%%%%%%%%%%%%%%%%%%%%
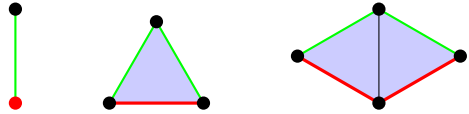
\begin{figure}

\begin{tikzpicture}[scale=1.25]

\draw[green,thick] (0,0) -- (0,1);

\fill[red] (0,0) circle (2pt);

\fill[black] (0,1) circle (2pt);

\filldraw[blue!20] (1,0)--(2,0)--(1.5, 0.5*3^{0.5})--cycle;

\draw[green,thick] (1,0)--(1.5, 0.5*3^{0.5})--(2,0);

\draw[red, very thick] (1,0)--(2,0);

\fill[black] (1,0) circle (2pt);
\fill[black] (2,0) circle (2pt);
\fill[black] (1.5,0.5*3^{0.5}) circle (2pt);

\filldraw[blue!20] (3, 0.5)--(3 + 0.5*3^{0.5}, 0) -- (3 + 3^{0.5}, 0.5) -- (3 + 0.5*3^{0.5}, 1) -- cycle;

\draw[green, thick] (3, 0.5)--(3 + 0.5*3^{0.5}, 1) -- (3 + 3^{0.5}, 0.5);
\draw[red, very thick] (3, 0.5)--(3 + 0.5*3^{0.5}, 0) -- (3 + 3^{0.5}, 0.5);

\draw (3 + 0.5*3^{0.5}, 0)--(3 + 0.5*3^{0.5}, 1);

\fill[black] (3, 0.5) circle (2pt);
\fill[black] (3 + 0.5*3^{0.5}, 0) circle (2pt);
\fill[black] (3 + 3^{0.5}, 0.5) circle (2pt);
\fill[black] (3 + 0.5*3^{0.5}, 1) circle (2pt);

\end{tikzpicture}

\caption{\label{fig:basicmoves}The three basic moves for triangulated simplicial disks: the trivial move, triangle move, and triangle-triangle move.}

\end{figure}
%%%%%%%%%%%%%%%%%%%%%%%%%%%%%%%

In doubly based spanning disk $D$,  points along the boundary of $D$ were a basic move could be applied are identified by the degree of the boundary vertex. Given a non distinguished  vertex $w \in \partial D^{(0)}$, if $deg(w) = 1$ a trivial move can be applied, if $deg(w) = 2$ a triangle move may be applied, and if $deg(w) = 3$ a triangle-triangle move may be applied.  This motivates the following definition.

\begin{defn}[Positive, Zero, and Negative Vertices] A vertex $v$ of a combinatorial path $\alpha$ in the boundary of a disk $D$ is a \emph{positive vertex} on $D$ if $4-deg_D(v)>0$, a \emph{zero vertex} if $4-deg_D(v) = 0$, and a \emph{negative vertex} if $4 - deg_D(v) <0$.  Let $\alpha_{+}(D)$, $\alpha_{0}(D)$, and $\alpha_{-}(D)$ denote the set of positive, zero, and negative vertices of $\alpha$ on $D$ respectively. Basic moves can be applied at positive vertices. To determine which move can be applied, the positive vertices can be further classified as $\alpha_1(D) = \{ v\in \alpha | 4-deg_D(v) = 1\}$, $\alpha_2(D) = \{ v\in \alpha | 4-deg_D(v) = 2\}$, and $\alpha_3(D) = \{ v\in \alpha | 4-deg_D(v) = 3\}$. Triangle-triangle moves can be applied at the vertices of $\alpha_1(D)$, triangle moves at vertices in $\alpha_2(D)$, and trivial moves at vertices in $\alpha_3(D)$. \end{defn}

Lemma \ref{lem:+vecount} is a consequence of Combinatorial Gau\ss-Bonnet. It was first observed by Gersten and Short in \cite{GeSh91}.

\begin{lem}\label{lem:+vecount}Let $D$ be a nonsingular $\cat(0)$ disk spanning a closed combinatorial path $\alpha$. Then $2|\alpha_2(D)| + |\alpha_1(D)| \geq |\alpha_-(D)| + 6$. If no triangle moves can be applied to $\alpha$, then $\alpha$ contains at least six more positive vertices than negative vertices. \end{lem}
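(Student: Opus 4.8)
The plan is to apply the Combinatorial Gau\ss-Bonnet Theorem directly to the disk $D$ and bookkeep the contributions of boundary vertices according to their classification. Since $D$ is $\cat(0)$ and nonsingular, every interior vertex lies in at least six triangles, so $\deg_D(v)\geq 6$ for each interior vertex; hence $\sum_{v\in\mathrm{int}D}(6-\deg(v))\leq 0$. Gau\ss-Bonnet then forces
\begin{equation*}
\sum_{v\in\partial D}(4-\deg(v)) \geq 6.
\end{equation*}
Now split the boundary sum over the three types of vertices. Writing $\partial D^{(0)} = \alpha_+(D)\sqcup\alpha_0(D)\sqcup\alpha_-(D)$, the zero vertices contribute nothing, and the positive part decomposes further as $\alpha_+(D) = \alpha_1(D)\sqcup\alpha_2(D)\sqcup\alpha_3(D)$ with respective values $1$, $2$, and $3$ for $4-\deg(v)$.

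The main step is to observe that the $\alpha_3$ vertices — the degree-one boundary vertices where trivial moves apply — can be removed from consideration by passing to a subdisk, or else bounded away. Actually the cleanest route is the following: if $w\in\alpha_3(D)$ then $w$ has degree one in $D$, so $w$ is a free vertex lying on a single pendant edge; but a pendant edge contributes a vertex of degree one, which cannot occur on the boundary of a $\cat(0)$ \emph{triangulated} disk with at least one triangle unless $D$ is itself a single edge. So for a genuine spanning disk carrying triangles we may assume $\alpha_3(D)=\emptyset$, and even in the degenerate edge case the inequality $2|\alpha_2|+|\alpha_1|\geq|\alpha_-|+6$ is checked by hand. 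With $\alpha_3(D)=\emptyset$ the boundary sum becomes exactly $2|\alpha_2(D)| + |\alpha_1(D)| - \sum_{v\in\alpha_-(D)}(\deg(v)-4)$, and since each negative vertex satisfies $\deg(v)-4\geq 1$, we get $\sum_{v\in\partial D}(4-\deg(v)) \leq 2|\alpha_2(D)| + |\alpha_1(D)| - |\alpha_-(D)|$. Combining with the Gau\ss-Bonnet lower bound of $6$ yields
\begin{equation*}
2|\alpha_2(D)| + |\alpha_1(D)| \geq |\alpha_-(D)| + 6,
\end{equation*}
which is the first assertion.

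For the second assertion, suppose no triangle moves can be applied to $\alpha$ on $D$; by the classification this means $\alpha_2(D)=\emptyset$. Substituting into the inequality just proved gives $|\alpha_1(D)| \geq |\alpha_-(D)| + 6$, i.e. $|\alpha_+(D)| = |\alpha_1(D)|\geq |\alpha_-(D)|+6$ (using $\alpha_2(D)=\emptyset$ and, as above, $\alpha_3(D)=\emptyset$), so $\alpha$ has at least six more positive vertices than negative vertices on $D$. I expect the only real subtlety to be the careful handling of the degree-one and degenerate cases — making sure the phrase "positive vertices" in the statement is interpreted consistently and that the pendant-edge configurations are genuinely excluded or absorbed — since the arithmetic of Gau\ss-Bonnet itself is immediate once the interior term is dispatched via the $\cat(0)$ hypothesis.
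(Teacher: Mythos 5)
Your argument is exactly the one the paper intends: the paper gives no written proof of this lemma, noting only that it is a consequence of Combinatorial Gau\ss-Bonnet (as first observed by Gersten and Short), and your computation---kill the interior term using the $\cat(0)$ condition, then bookkeep the boundary sum over $\alpha_1,\alpha_2,\alpha_3,\alpha_0,\alpha_-$ with $\deg(v)-4\geq 1$ at negative vertices---is precisely that calculation, mirroring Corollary \ref{cor:bdcount}. Two small corrections to your handling of $\alpha_3(D)$: the clean reason it is empty is the nonsingularity hypothesis itself (a diagram homeomorphic to the closed unit disk has every boundary vertex on a $2$-cell, hence of degree at least two), since triangulated disk diagrams with pendant edges do exist but are singular; and your aside that the degenerate single-edge case ``is checked by hand'' is false there ($2|\alpha_2|+|\alpha_1|=0<6$), but that case is likewise singular and so excluded by hypothesis, so nothing is lost.
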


\begin{lem} Let $\gamma = v_0,v_1,...,v_n$ be a combinatorial geodesic along the boundary of a disk $D$. Then $\sum_{i=1}^{n-1}(4-deg_D(v)) \leq 1$. \end{lem}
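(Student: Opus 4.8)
The plan is to argue by contradiction: suppose $\gamma = v_0, v_1, \dots, v_n$ is a combinatorial geodesic lying along the boundary of a disk $D$, but $\sum_{i=1}^{n-1}(4-\deg_D(v_i)) \geq 2$. The idea is that a large positive total curvature along a \emph{sub}path of the boundary forces the existence of basic moves that, when applied, produce a strictly shorter path with the same endpoints $v_0$ and $v_n$ — contradicting that $\gamma$ is a geodesic. The key point to exploit is that triangle and trivial moves are length reducing while triangle-triangle moves are length preserving, and that these moves are exactly what positive vertices (in $\gamma_2(D)$ and $\gamma_3(D)$, respectively $\gamma_1(D)$) detect.

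First I would set up the right combinatorial object. The interior path $\gamma$ together with the rest of $\partial D$ closes up, but it is cleaner to work directly with $D$ and the boundary arc $\gamma$: apply Combinatorial Gau\ss-Bonnet to $D$, giving $\sum_{v \in \partial D}(4-\deg(v)) + \sum_{v \in \mathrm{int}\,D}(6-\deg(v)) = 6$, and split the boundary sum into the contribution from the interior vertices $v_1,\dots,v_{n-1}$ of $\gamma$ and the contribution from the complementary arc. The hypothesis that $\gamma$ contributes at least $2$ should be combined with the geodesic hypothesis to control the complementary arc; in particular one wants to deduce that $\gamma$ must contain a positive vertex of type $\gamma_2$ or $\gamma_3$, or else a long enough run of $\gamma_1$ vertices. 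Here one can invoke (or mimic the proof of) Lemma~\ref{lem:+vecount}: in a nonsingular $\cat(0)$ disk, an abundance of positive boundary curvature not already ``used up'' by triangle moves forces triangle moves to exist after all. The cleanest route is probably to choose $D$ and then $\gamma$ so that the configuration is minimal in some sense (e.g. minimal area, or minimal number of positive vertices on $\gamma$), so that no length-preserving triangle-triangle move along $\gamma$ can be assumed available without already reducing to a simpler case.

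The main steps, in order: (1) assume for contradiction $\sum_{i=1}^{n-1}(4-\deg_D(v_i)) \geq 2$ and pass to a minimal counterexample (minimizing area of $D$, say); (2) observe that if any $v_i$ with $1 \le i \le n-1$ lies in $\gamma_2(D)$ or $\gamma_3(D)$, the corresponding triangle or trivial move applied to $\gamma$ yields a strictly shorter path from $v_0$ to $v_n$, contradicting geodesicity — so every interior vertex of $\gamma$ has $\deg_D(v_i) \geq 3$, i.e. $4-\deg_D(v_i) \le 1$; (3) therefore the only positive contributions come from $\gamma_1$-vertices (degree $3$), and one uses Gau\ss-Bonnet on $D$ plus the fact that interior vertices of a $\cat(0)$ disk have degree $\ge 6$ to bound how much total positive curvature $\gamma_1$-vertices can supply before the complementary boundary arc is forced to be non-geodesic or before a triangle-triangle move can be propagated into an actual length reduction; (4) push the triangle-triangle moves along $\gamma$ (a ``diagonal shuffle'' as in \cite{LeMc09}) until they either reduce length or reduce the area/complexity of $D$, contradicting minimality. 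Combining these, the total $\sum_{i=1}^{n-1}(4-\deg_D(v_i))$ cannot reach $2$, giving the bound $\le 1$.

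The hard part will be step (4): showing that a string of consecutive degree-$3$ boundary vertices along a \emph{geodesic} arc cannot persist. A triangle-triangle move at such a vertex is only length preserving, so on its own it gives no contradiction; one must show that performing it either eventually exposes a genuine length-reducing move further along, or strictly simplifies $D$ (fewer faces, fewer positive vertices on $\gamma$), so that the minimal-counterexample hypothesis is violated. This is exactly the kind of bookkeeping the moves machinery of Section~\ref{sec:combgeo} is designed for, and I expect the proof to reduce to a careful case analysis of what a triangle-triangle move does to the degrees of neighbouring boundary vertices, together with the observation that two positive vertices of a geodesic cannot be ``adjacent'' in a way that would let both moves fire. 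Everything else — Gau\ss-Bonnet, the degree-$\ge 6$ condition for interior vertices, and the length-reducing property of triangle and trivial moves — is already available from the excerpt.
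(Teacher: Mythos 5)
Your step (2) matches the paper's opening move (geodesicity rules out trivial and triangle moves, so $\deg_D(v_i)\geq 3$ and every positive vertex of $\gamma$ has degree exactly $3$), but after that your route has a genuine gap. The Gau\ss-Bonnet step cannot do what you want: the identity gives $\sum_{v\in\gamma}(4-\deg v) = 6 - \sum_{v\in\text{other arc}}(4-\deg v) - \sum_{v\in \mathrm{int}D}(6-\deg v)$, so an \emph{upper} bound on the contribution of $\gamma$ would require \emph{lower} bounds on the contributions of the complementary boundary arc and of the interior. Neither is available: the lemma makes no hypothesis on the rest of $\partial D$ (it need not be geodesic and can carry arbitrarily negative curvature), and even when $D$ is $\cat(0)$ the interior terms $6-\deg$ are only bounded \emph{above} by $0$, not below. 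Gau\ss-Bonnet in this setting only yields the lower bound of Corollary~\ref{cor:bdcount}, which points the wrong way. Moreover, the step you yourself flag as ``the hard part'' --- that a positive vertex followed, after a run of zero vertices, by another positive vertex lets you fire a chain of triangle-triangle moves capped by a triangle move and thereby strictly shorten $\gamma$ --- is precisely the content of the lemma's proof, and your proposal defers it to an unspecified case analysis rather than establishing it; the minimal-area/minimal-counterexample scaffolding does not substitute for it.

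The paper's argument is purely local along $\gamma$ and needs no contradiction, no Gau\ss-Bonnet, and no minimality or $\cat(0)$ hypothesis on $D$: after observing $\deg_D(v_i)\geq 3$, it notes that two vertices of $\gamma_+(D)$ not separated by a vertex of $\gamma_-(D)$ produce exactly the chain shortening of Figure~\ref{fig:pathshortening}, contradicting geodesicity; hence each positive vertex of $\gamma$, except possibly the last one, is followed by a negative vertex before the next positive vertex appears. Pairing each such positive vertex with the subsequent negative vertex gives pairs contributing at most $1+(-1)=0$, all remaining vertices contribute at most $0$, and only the final (possibly unmatched) positive vertex can contribute $+1$, giving $\sum_{i=1}^{n-1}(4-\deg_D(v_i))\leq 1$. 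To repair your proposal you would need to prove the chain-shortening claim and then replace the Gau\ss-Bonnet bookkeeping by this pairing count.
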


\begin{proof}  If $\gamma$ is a combinatorial geodesic, no length reducing moves are contained in the boundary of $D$ along $\gamma$. Thus $\deg_D(v_i) \geq 3$ for $i=1,...,n-1$. Reading along $\gamma$, if two vertices of $\gamma_{+}(D)$ are not separated by a vertex of $\gamma_{-}(D)$, then the length of $\gamma$ can be decreased by a sequence of triangle-triangle moves, followed by a single triangle move (see Figure \ref{fig:pathshortening}). Match each vertex of $\gamma_{+}(D)$, except possibly the last, with the subsequent vertex of $\gamma_{-}(D)$. Each pair contributes at most $0$ to the sum along $\gamma$. Suppose the final vertex of $\gamma_{+}(D)$ is $v_m$. Then $\sum_{i=1}^{n-1} (4-deg_D(v_i)) \leq (4-deg_D(v_m)) \leq 1$. 

\end{proof}

%%%%%%%%%%%%%%%%%%%%%%%%%

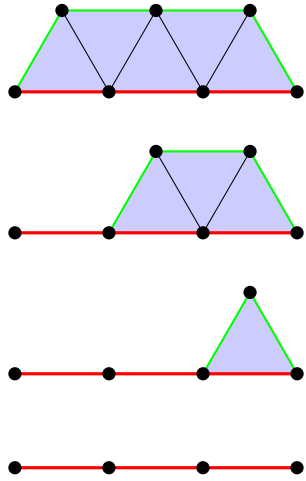
\begin{figure}
\begin{center}

\begin{tikzpicture}[scale = 1.25]

\coordinate (A1) at (0,3);
\coordinate (B1) at (3,3);
\coordinate (C1) at (2.5, 3+0.5*3^{0.5});
\coordinate (D1) at (0.5, 3+0.5*3^{0.5});
\coordinate (E1) at (1,3);
\coordinate (F1) at (1.5, 3+0.5*3^{0.5});
\coordinate (G1) at (2,3);

\filldraw [blue!20] (A1)--(B1)--(C1)--(D1)--cycle;

\draw[green, thick] (A1)--(D1)--(C1)--(B1);
\draw[red, very thick] (A1)--(B1);

\draw (D1)--(E1)--(F1)--(G1)--(C1);

\fill [black] (A1) circle (2pt);
\fill [black] (B1) circle (2pt);
\fill [black] (C1) circle (2pt);
\fill [black] (D1) circle (2pt);
\fill [black] (E1) circle (2pt);
\fill [black] (F1) circle (2pt);
\fill [black] (G1) circle (2pt);

\coordinate (A2) at (0,1.5);
\coordinate (B2) at (3,1.5);
\coordinate (C2) at (2.5, 1.5+0.5*3^{0.5});
\coordinate (D2) at (0.5, 1.5+0.5*3^{0.5});
\coordinate (E2) at (1,1.5);
\coordinate (F2) at (1.5, 1.5+0.5*3^{0.5});
\coordinate (G2) at (2,1.5);

\filldraw [blue!20] (E2)--(F2)--(C2)--(B2)--cycle;

\draw[green, thick] (E2)--(F2)--(C2)--(B2);
\draw[red, very thick] (A2)--(B2);

\draw (F2)--(G2)--(C2);

\fill [black] (A2) circle (2pt);
\fill [black] (B2) circle (2pt);
\fill [black] (C2) circle (2pt);
\fill [black] (E2) circle (2pt);
\fill [black] (F2) circle (2pt);
\fill [black] (G2) circle (2pt);

\coordinate (A3) at (0,0);
\coordinate (B3) at (3,0);
\coordinate (C3) at (2.5, 0.5*3^{0.5});
\coordinate (D3) at (0.5, 0.5*3^{0.5});
\coordinate (E3) at (1,0);
\coordinate (F3) at (1.5, 0.5*3^{0.5});
\coordinate (G3) at (2,0);

\filldraw [blue!20] (G3)--(C3)--(B3)--cycle;

\draw[green, thick]  (G3)--(C3)--(B3);

\draw[red, very thick]  (A3)--(B3);

\fill [black] (A3) circle (2pt);
\fill [black] (B3) circle (2pt);
\fill [black] (C3) circle (2pt);
\fill [black] (E3) circle (2pt);
\fill [black] (G3) circle (2pt);

\draw[red, very thick] (0,-1)--(3,-1);

\fill [black] (0,-1) circle (2pt);
\fill [black] (1,-1) circle (2pt);
\fill [black] (2,-1) circle (2pt);
\fill [black] (3,-1) circle (2pt);

\end{tikzpicture}

\end{center} 

\caption{\label{fig:pathshortening}An example of a sequence of moves shortening a path with two positive vertices separated by a (possibly empty) set of zero vertices. Sequences of this type are called \emph{chain shortenings}.}

\end{figure}  

%%%%%%%%%%%%%%%%%%%%%%%%

\begin{thm}[L.-McCammond: Straightening Paths] Let $X$ be a $\cat(0)$ simplicial $3$-complex, and let $\alpha$ and $\beta$ be combinatorial paths staring at vertex $u$ and ending at a vertex $w$. If $\beta$ is a (possibly empty) geodesic, then $\alpha$ can be reduced to $\beta$ by applying a sequence of  length preserving or length reducing basic moves. If both paths are geodesic, all moves will be length preserving.\end{thm}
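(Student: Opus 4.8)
The plan is to straighten $\alpha$ onto $\beta$ by pushing it across a disk spanning the loop $\gamma=\alpha\cdot\bar\beta$, running an induction on the pair $(\ell(\alpha),A)$ ordered lexicographically, where $A$ denotes the minimal combinatorial area of a $\cat(0)$ disk spanning $\gamma$ (finite by Theorem~\ref{thm:spanningdisks}). A triangle move or a trivial move shortens $\alpha$; a triangle–triangle move preserves $\ell(\alpha)$, but since it is carried by two faces of a chosen minimal disk, it strictly lowers $A$. So it will be enough to produce one applicable basic move whenever $\alpha\ne\beta$, after first disposing of degenerate configurations by recursion.

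First I would strip away degeneracies. If $\alpha$ contains a spur, a trivial move removes it. If $\alpha$ and $\beta$ begin, or end, with a common edge, I factor it out and recurse on the strictly shorter paths, noting that a subpath of a geodesic is a geodesic, so $\beta$'s truncation is still geodesic. If $\alpha$ revisits a vertex $q$, I write $\alpha=\alpha^{(1)}\cdot\alpha^{(2)}\cdot\alpha^{(3)}$ with $\alpha^{(2)}$ a strictly shorter loop at $q$; by induction $\alpha^{(2)}$ straightens onto the empty geodesic at $q$, after which I continue with the shorter path $\alpha^{(1)}\cdot\alpha^{(3)}$. If $\alpha$ and $\beta$ meet at a vertex interior to both, I split both paths there and recurse on the two resulting pairs of shorter subpaths. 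In every case $\ell$ of each new path strictly drops, so the induction hypothesis applies; after these reductions I may assume $\alpha$ and $\beta$ are embedded, internally disjoint, share no terminal edge, and are distinct, so $\gamma$ is an embedded loop of length at least $3$.

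In this generic situation I take a minimal $\cat(0)$ disk $D$ with $\partial D=\gamma$. Since $\gamma$ is embedded, its boundary cycle maps onto a simple loop, hence is itself simple, so $D$ is nonsingular and $\alpha$, $\bar\beta$ are complementary boundary arcs of $D$ meeting at $u$ and $w$. By Corollary~\ref{cor:bdcount} the boundary weights satisfy $\sum_{v\in\partial D}(4-\deg_D v)\ge 6$. Applying the lemma that a geodesic boundary arc carries total boundary weight at most $1$ to the arc $\beta$ bounds the contribution of the interior vertices of $\beta$ by $1$; and since $\alpha$ and $\beta$ share no terminal edge, $\deg_D u,\deg_D w\ge 2$, so $u$ and $w$ each contribute at most $2$. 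Subtracting, the interior vertices of $\alpha$ carry total boundary weight at least $1$, so some interior vertex $v$ of $\alpha$ has $\deg_D v\le 3$; as $\alpha$ has no spur, $\deg_D v\in\{2,3\}$, and I apply the triangle move, resp.\ the triangle–triangle move, carried by the one, resp.\ two, faces of $D$ at $v$. Removing those faces from $D$ yields a spanning disk of the new loop of strictly smaller area, so the lexicographic parameter drops and the induction closes. For the last sentence, if $\alpha$ is also geodesic then the same boundary-arc lemma applied to $\alpha$ forces $4-\deg_D v=1$ at every positive interior vertex (interior vertices of a geodesic arc have degree at least $3$, else a triangle move would shorten $\alpha$), so only length-preserving triangle–triangle moves occur; and this property survives the reductions, since geodesics have no spurs and their subpaths are geodesics.

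The counting here is in essence Gersten and Short's argument, so the real work is the bookkeeping of the first two steps: checking that each reduction genuinely lowers the induction parameter, and that each move is carried by a subdisk of $D$ attached along $\alpha$, so that deleting its faces again produces a disk spanning the new loop. The one structural fact worth isolating is that any disk spanning an \emph{embedded} loop is automatically nonsingular, which is exactly what makes it legitimate to reduce to the generic case; beyond that the argument is a straightforward descent.
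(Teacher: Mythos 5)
Your proposal is correct and is essentially the paper's own argument: the same induction on the length of $\alpha$ and the area of a minimal spanning disk, with the move produced by the same count — Corollary \ref{cor:bdcount} gives boundary weight at least $6$, the geodesic boundary-arc lemma caps $\beta$'s interior contribution at $1$, and the two distinguished vertices contribute at most $2$ each, forcing a positive interior vertex on $\alpha$. The only difference is bookkeeping: where the paper disposes of singular disks by splitting them into nonsingular subdisks, you preprocess the paths (spurs, shared edges, revisited and shared vertices) so that the boundary loop is embedded and the minimal disk is automatically nonsingular, which is an equivalent way of handling the same degenerate case.
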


\begin{proof} Let $D$ be the disk spanning the combinatorial loop $\alpha\beta^{-1}$. Induct on  the area of $D$, and the length of $\alpha$. If  the area of $D$ is zero,  and $\ell(\alpha) = \ell(\beta)$, then $\alpha = \beta$ and the result holds trivially. If $D$ is nonsingular, then each distinguished vertex has degree at least two.  By the proceeding lemma, the sum of degrees of the nondistinguished vertices along $\beta$ is at most  one.  Thus if $\beta = [v_0=u, v_1, v_2, ...., v_n = w]$, then $$\sum_{v \in \beta^{(0)}} (4-deg_D(v)) = (4 - deg_D(u)) + \sum_{i=1}^{n-1} (4-deg_D(v_i)) + (4-deg_D(w)) \leq 5.$$ This implies that the sum of nondistinguished vertex degrees along $\alpha$ is at least one. Thus $\alpha$ contains a positive vertex, and the corresponding basic move may be applied to $\alpha$. If $D$ is singular, then either $\alpha$ contains a trivial move, or $D$ may be broken into nonsingular subdisks along which a subpath of alpha contains a basic move by the previous argument.\end{proof}

\begin{cor} Every move can be broken in to a sequence of basic moves. \end{cor}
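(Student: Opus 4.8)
The plan is to prove this by an induction along the lines of the proof of the Straightening Paths theorem, using Crowley's Theorem~\ref{thm:spanningdisks} to replace the given diagram by one with enough positive curvature on its boundary. A move consists of a doubly based disk $D$ together with a simplicial map $f\colon D\to K$; set $\alpha=f(\text{old path})$ and $\beta=f(\text{new path})$, so $\alpha$ and $\beta$ are combinatorial paths in $K^{(1)}$ sharing their endpoints, with $\ell(\beta)\le\ell(\alpha)$. Since applying the move only alters the subpath $\alpha$ inside the ambient path, it suffices to carry $\alpha$ to $\beta$ by a sequence of basic moves. Now $\alpha\beta^{-1}$ is a combinatorial loop in $K^{(1)}$, so by Theorem~\ref{thm:spanningdisks} it bounds a $\cat(0)$ disk of minimal combinatorial area, and I would run the argument on such a disk rather than on the given $D$, because Corollary~\ref{cor:bdcount} then supplies the inequality $\sum_{v\in\partial}(4-\deg(v))\ge 6$.

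First I would induct on the pair consisting of the minimal combinatorial area of a $\cat(0)$ disk bounding $\alpha\beta^{-1}$ and the length $\ell(\alpha)$, ordered lexicographically. If the area is zero then $\alpha=\beta$ and there is nothing to prove. If the minimal disk is singular, cut it at a cut vertex into nonsingular subdisks, which realize moves of strictly smaller area (after first removing a spur by a trivial move if necessary), and apply the inductive hypothesis to each. Otherwise let $D'$ be the nonsingular minimal disk, double based at the two common endpoints of $\alpha$ and $\beta$. These two distinguished vertices have degree at least two and so contribute at most $4$ to the Gau\ss-Bonnet sum, whence the vertices lying in the interiors of $\alpha$ and of $\beta$ together contribute at least $2$; in particular at least one of $\alpha,\beta$ carries a nondistinguished positive vertex $p$.

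If $p$ lies on $\alpha$, I would apply the basic move it determines --- a trivial, triangle, or triangle-triangle move according as $\deg_{D'}(p)$ is $1$, $2$, or $3$ --- to obtain a path $\alpha'$; deleting the $0$, $1$, or $2$ triangles incident to $p$ (together with the spur edge, in the trivial case) exhibits a spanning disk for $\alpha'\beta^{-1}$ of strictly smaller area (triangle and triangle-triangle moves) or of equal area and strictly smaller length $\ell(\alpha')$ (trivial move), so the inductive hypothesis carries $\alpha'$ to $\beta$ by basic moves and we prepend the move at $p$. If instead the positive vertex lies only on $\beta$, I would push $\beta$ across the triangle or pair of triangles at $p$ to a path $\beta'$ with $\ell(\beta')\le\ell(\beta)\le\ell(\alpha)$, apply the inductive hypothesis to carry $\alpha$ to $\beta'$, and then append the inverse push $\beta'\to\beta$; since the inverse of a triangle-triangle move is again a triangle-triangle move, this last step is itself a basic move provided the push at $p$ was a triangle-triangle move.

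The hard part will be that last proviso. A positive vertex of the minimal disk lying on the new path could a priori be a triangle site (degree two) or a trivial site (degree one), and the inverse of a triangle or trivial move is length-increasing, hence not one of the basic moves. One must therefore know that the new path of a move admits no triangle or trivial move in a minimal spanning disk --- equivalently, that it is unshortenable --- and that this property persists through the reductions above. When the new path is a combinatorial geodesic, which is the case relevant to the applications in this paper, this holds automatically, since a vertex of degree at most two on a geodesic $\beta$ would furnish an edge short-cutting $\beta$; and in that case the corollary is in fact immediate from the Straightening Paths theorem applied directly to $\alpha$ and $\beta$. Pinning down this unshortenability in general, along with the cut-vertex bookkeeping in the singular case, is where I expect essentially all of the work to lie; the Gau\ss-Bonnet counting that drives the induction is routine.
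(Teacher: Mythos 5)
You have correctly located the crux, but your proposal stops exactly where the statement needs an argument, and the paper itself offers none to compare against: the corollary is stated with no separate proof, as an immediate consequence of the Straightening Paths theorem, i.e.\ precisely the observation you make in passing that when the new path is a combinatorial geodesic (or, more generally, admits no length-reducing move on any minimal spanning disk) the old path is straightened onto it by basic moves. That remark of yours \emph{is} the intended proof, and it is the only form in which the corollary is used later (the moves applied in the rest of the paper have geodesic or unshortenable new paths). The inductive machinery you build around it for an arbitrary new path is therefore extra, and the ``proviso'' you defer --- a positive vertex of the minimal disk lying only on the new path with degree one or two, whose inverse push is length-increasing and hence not basic --- is left unresolved in your write-up, so as a proof of the literal statement ``every move'' your proposal is incomplete.

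Moreover, that gap cannot be closed by more careful bookkeeping: without a hypothesis on the new path the statement is false, because basic moves never increase length. A small $\cat(0)$ example: take vertices $u,a,b,w,c,d$ with triangles $uab$, $ubw$, $ucw$, $cdw$ (all vertex links are paths, so the complex is $\cat(0)$). The doubly based disk on these four triangles, based at $u$ and $w$, is a legitimate move with old path $[u,a,b,w]$ and new path $[u,c,d,w]$, both of length three while $d_c(u,w)=1$. Since $\ell$ is preserved, any decomposition into basic moves would have to consist of triangle-triangle moves only, but the only triangle-triangle moves available from $[u,a,b,w]$ shuttle between $[u,a,b,w]$ and the backtracking path $[u,a,u,w]$, and the vertex $c$ can never be introduced; so this move is not a composition of basic moves. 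Thus the unshortenability of the new path is an essential hypothesis, not a technicality, and once you impose it your induction collapses back to the Straightening Paths theorem --- which is the one-line argument the paper intends. A fair summary is that your geodesic-case sentence matches the paper's (implicit) proof, while the general case you set out to handle is both unproved in your proposal and unprovable as stated.
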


For the remainder of the paper, the term \emph{move} will be used to refer to a \emph{basic move}. The following proposition will be useful in proving regularity results for path systems. In particular, it implies the path system of combinatorial geodesics is regular.

\begin{prop}\label{prop:combfft} Let $\alpha$ be a non geodesic combinatorial path. Then $\alpha$ is $(1,0)$-fellow travelled by a strictly shorter path. \end{prop}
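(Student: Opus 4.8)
The plan is to manufacture from $\alpha$ a single path $\beta$ with $\ell(\beta)=\ell(\alpha)-1$ that displaces every point of $\alpha$ by at most $1$. Shortening by \emph{exactly} one is the crucial feature: the common terminal segment of $\alpha$ and $\beta$ is then traversed with a parameter offset of only $1$, and consecutive vertices of a path lie at distance $1$, so the displacement bound survives on that tail, while on the rest it will come from a local picture. Fix a combinatorial geodesic $\gamma$ with the same endpoints $u,w$ as $\alpha$; since $\alpha$ is not geodesic, $\ell(\gamma)<\ell(\alpha)$, and by Theorem \ref{thm:spanningdisks} there is a minimal-area $\cat(0)$ disk $D$ with $\partial D=\alpha\gamma^{-1}$. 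Cancelling any shared initial and terminal edges of $\alpha$ and $\gamma$ at the outset does no harm, since a short fellow traveller of the truncated path extends over the cancelled edges; and singular diagrams are handled, exactly as in the proof of the Straightening Paths theorem, by passing to a nonsingular subdiagram. So assume $D$ is nonsingular and, for now, that $\alpha$ contains no spur, so that $\alpha$ has no valence-one vertex on $D$.

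The first step is a Gau\ss--Bonnet count. Corollary \ref{cor:bdcount} gives $\sum_{v\in\partial D}(4-\deg_D v)\ge 6$, and the lemma bounding the degrees of the nonterminal vertices of a geodesic lying on $\partial D$ gives $\sum_{v\in\gamma^{(0)}\setminus\{u,w\}}(4-\deg_D v)\le 1$; since the vertices of $\partial D$ are those of $\alpha$ together with the interior vertices of $\gamma$, this forces $\sum_{v\in\alpha^{(0)}}(4-\deg_D v)\ge 5$. Now dichotomize. If some vertex of $\alpha$ has valence two on $D$, a triangle move applies there and shortens $\alpha$ by one. Otherwise every vertex of $\alpha$ has valence at least three, so each summand is at most $1$ and, writing $p$ for the number of valence-three (positive) vertices of $\alpha$, the inequality gives $p$ at least $5$ more than the negative weight $\sum_{\deg_D v\ge5}(\deg_D v-4)$. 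If no two positive vertices of $\alpha$ were separated along $\alpha$ by zero vertices alone, each of the $p-1$ gaps between consecutive positive vertices would contain a distinct valence-$\ge5$ vertex, making the negative weight at least $p-1$; this contradicts its upper bound $p-5$. So $\alpha$ has a run between two positive vertices consisting only of zero vertices, and the chain shortening of Figure \ref{fig:pathshortening} --- a sequence of triangle-triangle moves followed by one triangle move --- applies to that run and shortens $\alpha$ by one.

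The second step is to check that the resulting $\beta$ $(1,0)$-fellow travels $\alpha$. A triangle move exchanges two sides of a unit equilateral triangle: the points at parameter $t$ from the common initial vertex lie at distance $t\le1$ for $t\le1$, and afterwards the new subpath rests at the far vertex while the old one runs along the third side, still within distance $1$. In a chain shortening the relevant subdiagram is a strip of unit equilateral triangles one triangle wide, and the old and new subpaths are its two sides; crossing any rhombus of two triangles glued along their short diagonal, one side is a translate of the other by a unit vector, so corresponding points stay at distance exactly $1$. With the parameter-offset-one observation on the common tail and the equality of endpoints, this gives fellow-traveller constant $1$ and $l=0$.

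The step I expect to be the real obstacle is the case in which $\alpha$ contains a spur $[a,p,a]$: simply deleting it shortens by two and can drive the common tail a parameter distance two apart, so the obvious candidate need not $(1,0)$-fellow travel $\alpha$. The remedy is to \emph{relocate or reshape} the spur so that a length-one reduction near it again takes place inside a width-one strip --- trading $[a,p,a,b]$ for $[a,p,b]$ when $p$ is adjacent to the following vertex $b$, trading $[a,p,a,b]$ for $[a,x,b]$ when $a$, $p$ and $b$ have a common vertex $x$, and, in the remaining configurations, exchanging the spur for a spur elsewhere along the path (as for $[p,a,b,a,p']$, which is $(1,0)$-fellow travelled by the shorter $[p,p',a,p']$ on two tetrahedra glued along the vertex $a$). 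Packaging these cases into one clean statement, most naturally by an induction on $\ell(\alpha)$ that peels off a spur together with its two adjacent edges, is the technical heart of the argument; the residual degeneracies --- valence one or two at $u$ or $w$ --- collapse, via the same edge-cancellation trick or by choosing $\gamma$ through the first edge of $\alpha$, to a strictly shorter instance.
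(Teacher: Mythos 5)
Your overall strategy is the paper's: span $\alpha\gamma^{-1}$ by a minimal $\cat(0)$ disk, use Gau\ss--Bonnet to locate a triangle move or a chain shortening on $\alpha$, and note that such a shortening displaces points by at most one. But your curvature bookkeeping has a genuine gap, and it sits exactly where you deviate from the paper. You take an \emph{arbitrary} geodesic $\gamma$ and the lemma bounding the weight of the nondistinguished vertices of a geodesic on $\partial D$ by $1$, deducing $\sum_{v\in\alpha^{(0)}}(4-\deg_D v)\ge 5$. That sum includes $u$ and $w$, and positive weight at the endpoints is useless: if $\deg_D(u)=2$, the unique triangle at $u$ has one boundary edge on $\alpha$ and the other on $\gamma$, so no triangle move applies to $\alpha$ there --- the first branch of your dichotomy is false at an endpoint; likewise, if one of the two positive vertices bounding your run of zero vertices is $u$ or $w$, the chain shortening would move a basepoint and so does not shorten $\alpha$. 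Since the endpoints can absorb weight up to $4$, your inequality leaves only weight $\ge 1$ on the nondistinguished vertices of $\alpha$, and a single weight-one vertex yields only a length-preserving triangle--triangle move. Your closing suggestion to ``choose $\gamma$ through the first edge of $\alpha$'' does not repair this, because the first edge of a non-geodesic path need not lie on any geodesic from $u$ to $w$. The paper's proof avoids the whole issue by choosing $\gamma$ so that the spanning disk has \emph{minimal area}: then $\gamma$ has no positive nondistinguished vertices, so $\sum_{v\in\gamma^{(0)}}(4-\deg_D v)\le(4-\deg_D u)+(4-\deg_D w)\le 4$, and Corollary \ref{cor:bdcount} forces weight at least $2$ on the nondistinguished vertices of $\alpha$ alone --- exactly enough to produce either an interior degree-two vertex (triangle move) or two interior positive vertices separated only by zeros (chain shortening, Figure \ref{fig:pathshortening}). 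You need this device, or an equivalent one, for the count to close.

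On the singular/spur case you are right that it is delicate --- deleting a spur shortens by two and, under the synchronous parameterization, only gives a $2$-fellow traveller, a point on which the paper's own one-line treatment (``either $\alpha$ contains a trivial move\ldots'') is terse --- but your proposed remedy is a list of sample configurations with the general packaging explicitly left open (``the technical heart''), so as written this part is a sketch rather than a proof. The verification that triangle moves and chain shortenings stay within distance one is fine and matches the paper's implicit argument.
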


\begin{proof} Let $u$ and $w$ be the distinguished endpoints of $\alpha$. Choose an equivalent geodesic path $\gamma$ such that the doubly based disk $D$ spanning $\alpha\gamma^{-1}$ has minimal area.  Suppose that $D$ is nonsingular. Then each vertex along the boundary of $D$ has degree at least two. If $\gamma$ has a positive vertex on $D$, then the area of $D$ can be reduced by applying the corresponding move. Thus $\gamma_{+}$ is empty, and $\sum_{v \in \gamma} (4-deg_D(v)) \leq (4-deg_D(u)) + (4-deg_D(w)) \leq 4$. This implies the sum of the degree of the nondistinguished vertices along $\alpha$ is at least two. Thus either $\alpha$ contains a nondistinguished vertex of degree two or two non distinguished vertices of degree one separated by a string of zero vertices (see Figure \ref{fig:pathshortening}). In either case $\alpha$ contains a shortening and is $1$-fellow traveled by a shorter path. If $D$ is singular, either $\alpha$ contains a trivial move, or the proceeding argument can be applied to the nonsingular subdisks of $\alpha$.
\end{proof}

The Path Straightening Theorem gives a method to move between two geodesics with common endpoints through the minimal disk spanning their union. Thus the set of combinatorial paths between a distinguished set of points are related via their spanning disks; one can ``sweep'' from one combinatorial path to another combinatorial path with the same distinguished endpoints by applying a sequence of moves. This will be used to determine a path system. The following lemma will be helpful in this process.

\begin{prop}\label{prop:diskunion} Let $D_1$ and $D_2$ be two minimal singular disks with a common combinatorial geodesic $\gamma$ along their boundary. Then $D = D_1 \cup_{\gamma} D_2$ can be reduced to a minimal disk $D' = D'_1 \cup_{\gamma'} D'_2$ where $\gamma'$ is obtained from $\gamma$ by a sequence of triangle-triangle moves. Furthermore, if $v' \in \gamma'$ is the image of $v \in \gamma$, then $deg_{D'_i}(v) \geq deg_{D_i}(v)$ for $i=1,2$. \end{prop}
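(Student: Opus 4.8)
The plan is to induct on the combinatorial area of $D=D_1\cup_\gamma D_2$, at each stage locating an obstruction to minimality that is forced to lie on $\gamma$ and clearing it by a triangle-triangle move across $\gamma$ followed by a purely local re-triangulation.

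I would first record two preliminary facts. \emph{(i)} A minimal spanning disk in a $\cat(0)$ simplicial $3$-complex is a $\cat(0)$ disk: every interior vertex has degree at least $6$, since an interior vertex $w$ of degree $k\le 5$ would have as its link a $k$-gon that, because $K$ is flag (Theorem~\ref{thm:cat(0)flag}) and contains no empty triangle, square, or pentagon (Proposition~\ref{prop:noemptytrisqpent}), bounds a disk of only $k-2$ triangles in $K$; exchanging the $k$-triangle star of $w$ for that disk strictly lowers the area. \emph{(ii)} On a combinatorial geodesic running along the boundary of a disk no two positive vertices are consecutive (otherwise a chain shortening as in Figure~\ref{fig:pathshortening} would shorten the geodesic); in particular a degree-$3$ vertex of $\gamma$ has both of its $\gamma$-neighbours of degree $\ge 4$.

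Now form $D=D_1\cup_\gamma D_2$. The interior vertices of $\gamma$ become interior vertices of $D$, while every other interior vertex of $D$ lies in the interior of $D_1$ or of $D_2$ and so, by minimality of those disks and fact \emph{(i)}, has degree $\ge 6$. If $D$ is minimal we take $D'=D$, $\gamma'=\gamma$. Otherwise fact \emph{(i)} produces an interior vertex of $D$ of degree $\le 5$, necessarily an interior vertex $v$ of $\gamma$. Since $\gamma$ is geodesic, $\deg_{D_j}(v)\ge 3$ for $j=1,2$, and since $\deg_{D_1}(v)+\deg_{D_2}(v)-2=\deg_D(v)\le 5$ we may assume $\deg_{D_1}(v)=3$ and $\deg_{D_2}(v)\le 4$. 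Then the star of $v$ in $D_1$ consists of two triangles $\{v,c,x\},\{v,x,d\}$ with $c,d$ the $\gamma$-neighbours of $v$; apply the triangle-triangle move replacing the subpath $c,v,d$ of $\gamma$ by $c,x,d$ and pushing those two triangles to the $D_2$-side. Now $v$ is interior to the enlarged $D_2$-piece with degree $\deg_{D_2}(v)+1\le 5$, so that piece is not minimal; collapse the star of $v$ there by re-triangulating its link (a $(\le 5)$-gon) using the fan based at the vertex $x$ that now lies on the new path $\gamma_1$. The total area strictly drops and the result is $\widehat D_1\cup_{\gamma_1}\widehat D_2$ with $\gamma_1$ obtained from $\gamma$ by one triangle-triangle move; apply the inductive hypothesis and compose the moves.

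It then remains to propagate the degree inequality through one reduction step, the general case following by transitivity. The vertex $v$ itself is pushed off $\gamma$, its image being $x$: on the losing side $x$ was an interior vertex of $D_1$ of degree $\ge 6$, hence a boundary vertex of $\widehat D_1$ of degree $\ge 5\ge\deg_{D_1}(v)$; on the gaining side the fan-based-at-$x$ re-triangulation is arranged precisely so that $x$ lies in $\deg_{D_2}(v)-1$ triangles of $\widehat D_2$, giving $\deg_{\widehat D_2}(x)=\deg_{D_2}(v)$, and so that the triangle counts of $c$ and $d$ on the $D_2$-side are left unchanged; every other vertex of $\gamma$ is untouched by both the move and the collapse on the $D_2$-side. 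The crux -- and the step I expect to be the genuine obstacle -- is the behaviour of $c$ and $d$ on the losing side, where each loses one triangle of $v$'s old $D_1$-star; here I would invoke fact \emph{(ii)}: since $\gamma$ carried no two consecutive positive vertices, a neighbour driven down to degree $3$ becomes itself an interior vertex of $\widehat D$ of degree $\le 5$, so the obstruction merely migrates one edge along $\gamma$ and is removed at the next stage, and one argues that when the cascade of moves terminates -- which it must, by the strict area decrease -- no surviving vertex of $\gamma'$ has lost degree in either piece.
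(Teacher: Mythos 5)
Your overall strategy is the paper's: locate an interior vertex of $D=D_1\cup_\gamma D_2$ of degree at most five, observe it must be an interior vertex $v$ of $\gamma$ with $\deg_{D_1}(v)=3$, $\deg_{D_2}(v)\in\{3,4\}$, clear it by a triangle-triangle move plus a local re-fill, and induct on area. Your degree-$4$ analysis is fine and agrees with the paper. The genuine gap is the degree-$5$ case. After you push $v$ across the two-triangle side to $x$, the link of $v$ is the pentagon $[x,c,x_2,x_3,d]$ (with $x_2,x_3$ the interior neighbours of $v$ in $D_2$), and you re-fill it ``using the fan based at the vertex $x$.'' That fan exists in $K$ only if both diagonals $\{x,x_2\}$ and $\{x,x_3\}$ are edges of $K$. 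Proposition \ref{prop:noemptytrisqpent} guarantees only that \emph{some} pair of non-crossing diagonals spans faces, and since $\{c,d\}$ is excluded by geodesy the actual filling may instead be the fan at $x_2$ (diagonals $\{x,x_2\},\{d,x_2\}$) or the fan at $x_3$ (diagonals $\{x,x_3\},\{c,x_3\}$); this is exactly the three-way case split of Figure \ref{fig:5deg}. In the latter two cases your re-triangulation is simply not available in $K$; and if you keep your new path through $x$ but fill with the fan that does exist, the image of $v$ lies in only two triangles on the $D_2$-side, so $\deg_{\widehat D_2}(x)=3<4=\deg_{D_2}(v)$ and the ``furthermore'' clause fails. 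The paper avoids this by letting the filling dictate the move: $v$ is sent to $x$, $x_2$, or $x_3$ according to which pair of diagonals is present, and in each case the degree of the image of $v$ is preserved or increased in both pieces. Your argument needs this case analysis (or a substitute); the move cannot always be taken across the thin side.

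A secondary weakness: your closing paragraph correctly flags that $c$ and $d$ each lose a triangle on the losing side, but ``one argues that when the cascade terminates no surviving vertex of $\gamma'$ has lost degree'' is an assertion, not an argument -- strict area decrease gives termination only, not the degree inequality at the end. The paper's proof tracks only the vertex actually moved at each step (which is how its statement is used later, e.g.\ in Lemma \ref{lem:diag2}); if you intend the inequality for every vertex of $\gamma$, you must either prove the migration claim or restrict the claim as the paper implicitly does.
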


\begin{proof} If $D$ is not minimal  there exists a vertex $v \in int D$ of degree less than six. $D_1$ and $D_2$ are both minimal, so $v = v_i \in \gamma = [v_0,...,v_n]$. If $deg_{D}(v) = 3$, then $v$ lies on a single triangle on $D_1$ or $D_2$,  contradicting that $\gamma$ is geodesic. Thus either $deg_D(v) = 4$ or $deg_{D}(v) = 5$.

Suppose $deg_D(v) = 4$. Then $\gamma$ separates $\bar{st}_D(v)$ into two triangles on each side, and $deg_{D_1}(v) = deg_{D_2}(v) = 3$. The vertex $v$ has four neighbors in $D$, $v_{i-1}$ and $v_{i+1}$ the vertices proceeding and following $v$ on $\gamma$, $x_1 \in intD_1$, and $x_2 \in intD_2$. The closed combinatorial path $[v_{i-1},x_1,v_{i+1},x_2] $ defines a square in $K$. The complex $K$ contains no empty squares, and $\gamma$ is geodesic, so $x_1$ and $x_2$ must span an edge. This gives two choices of moves; send $v$ to $x_1$ in $D_1$, or  send $v$ to $x_2$ in $D_2$ (see Figure \ref{fig:deg4}).

Suppose the move sending $v$ to $x_1$ is applied. This sends $\gamma$ to $\gamma' =[v_0,...,v_{i-1},x_1,v_{i+1},...,v_n] $, $D$ to $D' = D \setminus (\bar{st}_D(v)) \cup \{v_{i-1}, x_1,x_2\} \cup\{v_{i+1},x_1,x_2\}$,  $D_1$ to $D'_1 = D_1 \setminus (\bar{st}_{D_1}(v))$, and $D_2$ to $D'_2 = D_2 \setminus (\bar{st}_{D_2}(v)) \cup \{v_{i-1}, x_1,x_2\}\cup \{v_{i+1},x_1,x_2\}$. Thus $deg_{D'_1}(x_1) = deg_{D_1} (x_1) - 1 \geq 5 > deg_{D_1}(v)$, and  $deg_{D'_2}(x_1) = deg_{D_2}(v)$. The case that $v$ is sent to $x_2$ is symmetric.

%%%%%%%%%%%%%%%%%%%%%%%%%%%

\begin{figure}

\begin{center}

\begin{tikzpicture}[scale=1.5]

\coordinate (A1) at (0.5,0);
\coordinate (B1) at (0, 0.5*3^{0.5} );
\coordinate (C1) at (-0.5,0);
\coordinate (D1) at (0, -0.5*3^{0.5});
\coordinate (O1) at (0,0.1);

\filldraw [blue!20] (A1)--(B1)--(C1)--(D1)--cycle;
\draw (A1)--(B1)--(C1)--(D1)--cycle;
\draw (O1)--(A1);
\draw (O1)--(B1);
\draw (O1)--(C1);
\draw (O1)--(D1);
\draw [dashed] (C1)--(A1);

\filldraw (A1) circle (1pt) node [anchor=west] {$x_2$};
\filldraw (B1) circle (1pt) node [anchor=south] {$v_{i+1}$};
\filldraw (C1) circle (1pt) node [anchor=east] {$x_1$};
\filldraw (D1) circle (1pt) node [anchor=north] {$v_{i-1}$};
\filldraw (O1) circle (1pt) node [anchor=south west] {$v$};

\coordinate (A2) at (2.5,0);
\coordinate (B2) at (2, 0.5*3^{0.5} );
\coordinate (C2) at (1.5,0);
\coordinate (D2) at (2, -0.5*3^{0.5});
\coordinate (O2) at (2,0.1);

\filldraw [blue!20] (A2)--(B2)--(C2)--(D2)--cycle;
\draw (A2)--(B2)--(C2)--(D2)--cycle;
\draw (O2)--(A2);
\draw (O2)--(B2);
\draw (O2)--(C2);
\draw (O2)--(D2);
\draw [dashed] (C2)--(A2);

\filldraw (A2) circle (1pt) node [anchor=west] {$x_2$};
\filldraw (B2) circle (1pt) node [anchor=south] {$v_{i+1}$};
\filldraw (C2) circle (1pt) node [anchor=east] {$x_1$};
\filldraw (D2) circle (1pt) node [anchor=north] {$v_{i-1}$};
\filldraw (O2) circle (1pt) node [anchor=south west] {$v$};

\coordinate (A3) at (0.5,-2.25);
\coordinate (B3) at (0, -2.25+0.5*3^{0.5} );
\coordinate (C3) at (-0.5,-2.25);
\coordinate (D3) at (0, -2.25-0.5*3^{0.5});

\filldraw [blue!20] (A3)--(B3)--(C3)--(D3)--cycle;
\draw (A3)--(B3)--(C3)--(D3)--cycle;
\draw (A3)--(C3);

\filldraw (A3) circle (1pt) node [anchor=west] {$x_2$};
\filldraw (B3) circle (1pt) node [anchor=south] {$v_{i+1}$};
\filldraw (C3) circle (1pt) node [anchor=east] {$v \rightarrow x_1$};
\filldraw (D3) circle (1pt) node [anchor=north] {$v_{i-1}$};

\coordinate (A4) at (2.5,-2.25);
\coordinate (B4) at (2, -2.25+0.5*3^{0.5} );
\coordinate (C4) at (1.5,-2.25);
\coordinate (D4) at (2, -2.25-0.5*3^{0.5});

\filldraw [blue!20] (A4)--(B4)--(C4)--(D4)--cycle;
\draw (A4)--(B4)--(C4)--(D4)--cycle;
\draw (A4)--(C4);

\filldraw (A4) circle (1pt) node [anchor=west] {$v \rightarrow x_2$};
\filldraw (B4) circle (1pt) node [anchor=south] {$v_{i+1}$};
\filldraw (C4) circle (1pt) node [anchor=east] {$ x_1$};
\filldraw (D4) circle (1pt) node [anchor=north] {$v_{i-1}$};

\end{tikzpicture}

\end{center}

\caption{\label{fig:deg4}Two choices for resolving a degree four vertex when taking the union on two minimal disks.}

\end{figure}
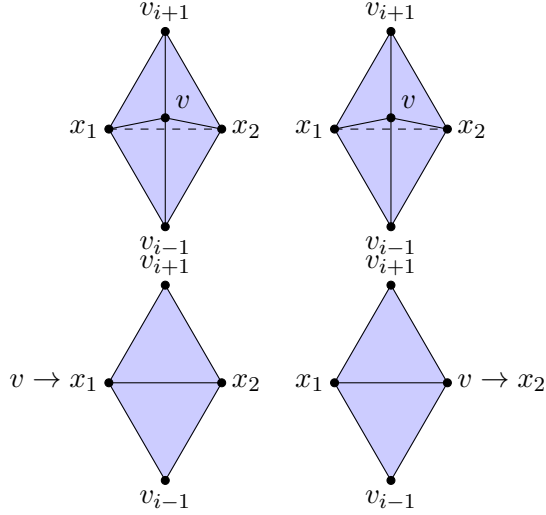

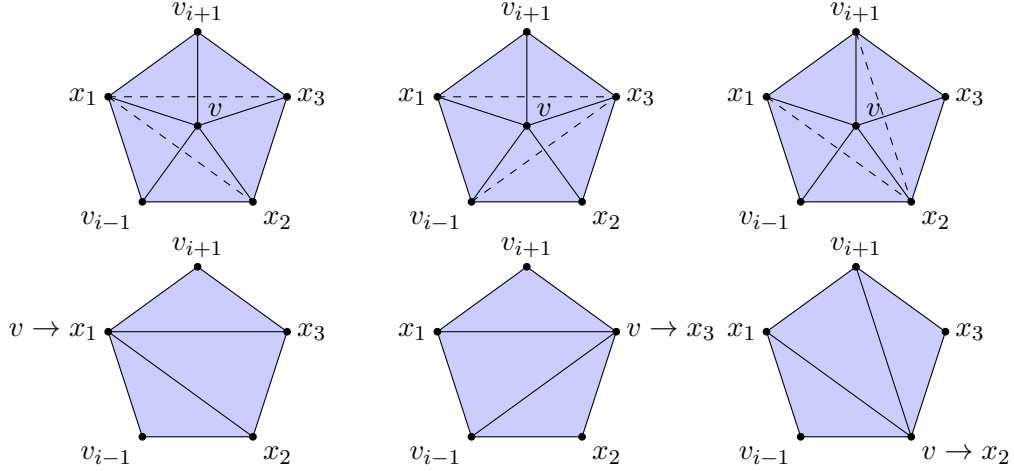
\begin{figure}

\begin{center}

\begin{tikzpicture}[scale=1.25]

\coordinate (A0) at (0,0);
\coordinate (A1) at (0, 1);
\coordinate (A2) at (-0.951056516, 0.309016994);
\coordinate (A3) at (-0.587785252, -0.809016994);
\coordinate (A4) at (0.587785252, -0.809016994);
\coordinate (A5) at (0.951056516, 0.309016994);

\filldraw[blue!20] (A1)--(A2)--(A3)--(A4)--(A5)--cycle;
\draw (A1)--(A2)--(A3)--(A4)--(A5)--cycle;
\draw (A0)--(A1);
\draw (A0)--(A2);
\draw (A0)--(A3);
\draw (A0)--(A4);
\draw (A0)--(A5);

\draw [dashed] (A2)--(A5);
\draw [dashed] (A2)--(A4);

\filldraw (A0) circle (1pt) node [anchor=south west] {$v$};
\filldraw (A1) circle (1pt) node [anchor=south] {$v_{i+1}$};
\filldraw (A2) circle (1pt) node [anchor=east] {$x_1$};
\filldraw (A3) circle (1pt) node [anchor=north east] {$v_{i-1}$};
\filldraw (A4) circle (1pt) node [anchor=north west] {$x_2$};
\filldraw (A5) circle (1pt) node [anchor=west] {$x_3$};

\coordinate (D0) at (0,-2.5);
\coordinate (D1) at (0, -1.5);
\coordinate (D2) at (-0.951056516, -2.5+0.309016994);
\coordinate (D3) at (-0.587785252, -2.5-0.809016994);
\coordinate (D4) at (0.587785252, -2.5-0.809016994);
\coordinate (D5) at (0.951056516, -2.5+0.309016994);

\filldraw[blue!20] (D1)--(D2)--(D3)--(D4)--(D5)--cycle;
\draw (D1)--(D2)--(D3)--(D4)--(D5)--cycle;

\draw  (D2)--(D5);
\draw (D2)--(D4);

\filldraw (D1) circle (1pt) node [anchor=south] {$v_{i+1}$};
\filldraw (D2) circle (1pt) node [anchor=east] {$v \rightarrow x_1$};
\filldraw (D3) circle (1pt) node [anchor=north east] {$v_{i-1}$};
\filldraw (D4) circle (1pt) node [anchor=north west] {$x_2$};
\filldraw (D5) circle (1pt) node [anchor=west] {$x_3$};

\coordinate (B0) at (3.5,0);
\coordinate (B1) at (3.5, 1);
\coordinate (B2) at (3.5-0.951056516, 0.309016994);
\coordinate (B3) at (3.5-0.587785252, -0.809016994);
\coordinate (B4) at (3.5+0.587785252, -0.809016994);
\coordinate (B5) at (3.5+0.951056516, 0.309016994);

\filldraw[blue!20] (B1)--(B2)--(B3)--(B4)--(B5)--cycle;
\draw (B1)--(B2)--(B3)--(B4)--(B5)--cycle;
\draw (B0)--(B1);
\draw (B0)--(B2);
\draw (B0)--(B3);
\draw (B0)--(B4);
\draw (B0)--(B5);

\draw [dashed] (B2)--(B5);
\draw [dashed] (B3)--(B5);

\filldraw (B0) circle (1pt) node [anchor=south west] {$v$};
\filldraw (B1) circle (1pt) node [anchor=south] {$v_{i+1}$};
\filldraw (B2) circle (1pt) node [anchor=east] {$x_1$};
\filldraw (B3) circle (1pt) node [anchor=north east] {$v_{i-1}$};
\filldraw (B4) circle (1pt) node [anchor=north west] {$x_2$};
\filldraw (B5) circle (1pt) node [anchor=west] {$x_3$};

\coordinate (E0) at (3.5,-2.5);
\coordinate (E1) at (3.5, -1.5);
\coordinate (E2) at (3.5-0.951056516, -2.5+0.309016994);
\coordinate (E3) at (3.5-0.587785252, -2.5-0.809016994);
\coordinate (E4) at (3.5+0.587785252, -2.5-0.809016994);
\coordinate (E5) at (3.5+0.951056516, -2.5+0.309016994);

\filldraw[blue!20] (E1)--(E2)--(E3)--(E4)--(E5)--cycle;
\draw (E1)--(E2)--(E3)--(E4)--(E5)--cycle;

\draw  (E2)--(E5);
\draw (E3)--(E5);

\filldraw (E1) circle (1pt) node [anchor=south] {$v_{i+1}$};
\filldraw (E2) circle (1pt) node [anchor=east] {$x_1$};
\filldraw (E3) circle (1pt) node [anchor=north east] {$v_{i-1}$};
\filldraw (E4) circle (1pt) node [anchor=north west] {$x_2$};
\filldraw (E5) circle (1pt) node [anchor=west] {$v \rightarrow x_3$};

\coordinate (C0) at (7,0);
\coordinate (C1) at (7, 1);
\coordinate (C2) at (7-0.951056516, 0.309016994);
\coordinate (C3) at (7-0.587785252, -0.809016994);
\coordinate (C4) at (7+0.587785252, -0.809016994);
\coordinate (C5) at (7+0.951056516, 0.309016994);

\filldraw[blue!20] (C1)--(C2)--(C3)--(C4)--(C5)--cycle;
\draw (C1)--(C2)--(C3)--(C4)--(C5)--cycle;
\draw (C0)--(C1);
\draw (C0)--(C2);
\draw (C0)--(C3);
\draw (C0)--(C4);
\draw (C0)--(C5);

\draw [dashed] (C2)--(C4);
\draw [dashed] (C1)--(C4);

\filldraw (C0) circle (1pt) node [anchor=south west] {$v$};
\filldraw (C1) circle (1pt) node [anchor=south] {$v_{i+1}$};
\filldraw (C2) circle (1pt) node [anchor=east] {$x_1$};
\filldraw (C3) circle (1pt) node [anchor=north east] {$v_{i-1}$};
\filldraw (C4) circle (1pt) node [anchor=north west] {$x_2$};
\filldraw (C5) circle (1pt) node [anchor=west] {$x_3$};

\coordinate (F0) at (7,-2.5);
\coordinate (F1) at (7, -1.5);
\coordinate (F2) at (7-0.951056516, -2.5+0.309016994);
\coordinate (F3) at (7-0.587785252, -2.5-0.809016994);
\coordinate (F4) at (7+0.587785252, -2.5-0.809016994);
\coordinate (F5) at (7+0.951056516, -2.5+0.309016994);

\filldraw[blue!20] (F1)--(F2)--(F3)--(F4)--(F5)--cycle;
\draw (F1)--(F2)--(F3)--(F4)--(F5)--cycle;

\draw  (F2)--(F4);
\draw (F1)--(F4);

\filldraw (F1) circle (1pt) node [anchor=south] {$v_{i+1}$};
\filldraw (F2) circle (1pt) node [anchor=east] {$x_1$};
\filldraw (F3) circle (1pt) node [anchor=north east] {$v_{i-1}$};
\filldraw (F4) circle (1pt) node [anchor=north west] {$v \rightarrow x_2$};
\filldraw (F5) circle (1pt) node [anchor=west] {$ x_3$};

\end{tikzpicture}

\end{center}

\caption{\label{fig:5deg} The three ways to resolve a degree five vertex in the union of two minimal disks. Unlike the degree four case, the choice of move depends on the filling of the pentagon.}

\end{figure}

%%%%%%%%%%%%%%%%%%%%%%%%%%%%%%%%%%%%%%%%%%%%%%%%%%%%%%%%%%%%%%%%

Suppose $deg_D(v)=5$. Then $v$ has five neighbors in $D$. As before, let $v_{i-1}$ be the neighbor proceeding $v$ in $\gamma$ and $v_{i+1}$ the neighbor following $v$ in $\gamma$. Then $\gamma$ separates $\bar{st}_D(v)$ into two sides, one with two triangles, the other with three. Without loss of generality suppose $\bar{st}_{D_1}(v)$ consists of two triangles $\{x_1,v_{i-1}, v\}$ and $\{x_1,v, v_{i+1}\}$, and  $\bar{st}_{D_2}(v)$ contains three triangles $\{v_{i-1},x_2,v\}$, $\{x_2,x_3,v\}$ and $\{x_3,v_{i+1},v\}$. Thus $deg_{D_1}(v) = 3$ and $deg_{D_2}(v) = 4$. The path $[x_1,v_{i-1},x_2,x_3, v_{i+1}]$ forms a pentagon in $K$, so two pairs of opposing vertices span an edge in $K$. Since $\gamma$ is geodesic, $v_{i-1}$, $v_{i+1}$ can not span an edge. This leaves three possible pairs of edges: $\{x_1,x_2\}$ and $\{x_1,x_3\}$, $\{x_1,x_3\}$ and $\{v_{i-1},x_3\}$, and $\{x_1,x_2\}$ and $\{v_{i+1},x_2\}$(see Figure \ref{fig:5deg}). 

If the pair $\{x_1,x_2\}$,$\{x_1,x_3\}$ span edges, apply the move at $v$ in $D_1$ sending $v$ to $v'=x_1$. The disk $D_1$ is sent to $D'_1 = D_1 \setminus \bar{st}{D_1}(v)$ and $D_2$ is sent to $D_2' = D_2 \setminus \bar{st}_{D_2}(v) \cup \{x_1,v_{i-1},x_2\}\cup \{x_1, x_2, x_3\}\cup\{x_1,x_3, v_{i+1}\}$. Thus $deg_{D'_1}(x_1) = deg_{D'_1}(x_1) - 1 \geq 5 > deg_{D_1}(v)$ and $deg_{D'_2}(x_1) = 4 = deg_{D_2} (v)$. If the pair $\{x_1,x_3\}$ and $\{v_{i-1},x_3\}$ is filled apply the move sending $v$ to $v' = x_3$,  and  if the filled pair is $\{x_1,x_2\}$ and $\{v_{i+1},x_2\}$ apply the move $v$ to $v' = x_2$. In each case the degree of the image of $v$ in $D_1$ is either preserved or increased. \end{proof}

%%%%%%%%%%%%%%%%%%%%%%%%%%%%%%%%%%%%%%%%%%%%%%%%

\section{Gersten-Short Geodesics}\label{sec:gsgeos}

%%%%%%%%%%%%%%%%%%%%%%%%%%%%%%%%%%%%%%%%%%%%%%%%

In this section the canonical path system used to prove Main Theorem \ref{main:bi} is defined. The paths are a generalization of those used by Gersten and Short in their proof of Theorem \ref{thm:gs1}.

\begin{defn}[GS-Geodesic] A combinatorial path $\gamma = [v_0,v_1,...v_n]$ is a \emph{Gersten-Short geodesic} or \emph{GS-geodesic} if for each minimal singular disk $D$ with $\gamma$ along the boundary of $D$ and $i=1,...n-1$ the following conditions hold.

\begin{enumerate}

\item Geodesic Condition: $\gamma$ is a combinatorial geodesic.
\item Diagonal Condition: If $v_{i} \in \gamma_+$ then $v_{i-1} \in \gamma_{-}$.

\end{enumerate}

Condition 2 forces the paths to move diagonally whenever possible. Pairs of vertices on combinatorial geodesics not satisfying the diagonal condition are called \emph{bad pairs}. Denote the path system of GS geodesic by $\mathcal{P}_{GS}$.\end{defn}

\begin{lem} \label{lem:diag1} Let $\gamma=[v_0,v_1,...,v_n]$ be a combinatorial geodesic lying along the boundary of a minimal singluar disk $D$. Then there exists a finite sequence of triangle-triangle moves sending $\gamma$ and $D$ to a combinatorial geodesic $\gamma'$ and disk $D'$ satisfying the diagonal condition. \end{lem}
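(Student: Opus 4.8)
The plan is to iteratively remove bad pairs by triangle-triangle moves, using a monovariant to guarantee termination. Recall a bad pair is a pair $v_{i-1}, v_i$ along $\gamma$ with $v_i \in \gamma_+(D)$ but $v_{i-1} \notin \gamma_-(D)$, i.e. $\deg_D(v_{i-1}) \geq 4$. Since $\gamma$ is geodesic, by the lemma preceding the Straightening Theorem no length-reducing move sits along $\gamma$, so $v_i \in \gamma_+(D)$ actually means $\deg_D(v_i) = 3$ (as $\deg_D(v_i) \geq 3$ and $4 - \deg_D(v_i) > 0$); hence the only positive vertices along a geodesic are degree-three vertices, at which a triangle-triangle move is available. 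First I would observe that applying the triangle-triangle move at such a $v_i$ replaces the subpath $[v_{i-1}, v_i, v_{i+1}]$ by $[v_{i-1}, w, v_{i+1}]$ where $w$ is the opposite vertex of the two triangles of $\bar{st}_D(v_i)$; this is length preserving, keeps $\gamma'$ a geodesic, and the new disk $D'$ has one fewer face than $D$ only if the move were length-reducing — which it is not, so $D'$ has the same area. So area is not the monovariant; instead I would track a positional quantity.

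The key step is choosing the right monovariant. I would apply the triangle-triangle move at the bad pair $(v_{i-1}, v_i)$ with $i$ \emph{minimal}. After the move the vertex formerly at position $i$ is replaced by $w$; I claim that no new bad pair is created at positions $\leq i$. Indeed positions $1, \dots, i-2$ of $\gamma'$ coincide with those of $\gamma$ and their degrees in $D'$ are unchanged (the move only alters $\bar{st}_D(v_i)$), and the vertex $v_{i-1}$ at position $i-1$ has its degree in $D'$ decreased by one: $\deg_{D'}(v_{i-1}) = \deg_D(v_{i-1}) - 1$. So the only worry is whether $v_{i-1}$ becomes positive — but then $(v_{i-2}, v_{i-1})$ could be a bad pair at an \emph{earlier} position, which would contradict minimality of $i$ unless $v_{i-2} \in \gamma_-(D) = \gamma_-(D')$. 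Thus after the move the first bad pair has strictly larger index, and since indices are bounded by $n-1$, after finitely many triangle-triangle moves no bad pairs remain. A more careful accounting is needed because a triangle-triangle move at $v_i$ can change $\deg_D(v_{i+1})$ and $\deg_D(v_{i-1})$, so I would set the monovariant to be the pair $(\#\{\text{bad pairs}\}, \text{index of first bad pair})$ ordered lexicographically in the appropriate direction, or more robustly the sum $\sum_j j \cdot \chi[\text{position } j \text{ is the right end of a bad pair}]$, and check it strictly decreases.

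The main obstacle I anticipate is bookkeeping the degree changes at the neighbors $v_{i-1}$ and $v_{i+1}$ when the move is applied, and confirming that $w$ (the new vertex at position $i$) together with its neighbors does not spawn a bad pair that sabotages the monovariant — in particular ruling out an infinite "cascade" where fixing a bad pair at position $i$ creates one at position $i+1$, fixing that creates one at $i+2$, and so on, with the front never stabilizing. The resolution is that each such step strictly advances the index of the leftmost bad pair (using minimality and that $\gamma$ is geodesic so degrees along it are $\geq 3$, hence a vertex cannot drop below degree $3$ under a triangle-triangle move and positive-ness is governed entirely by whether the degree equals $3$), so the cascade has length at most $n$. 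One should also note the move stays inside $K$: the vertex $w$ exists because it is already a vertex of $D \subseteq K$. Finally, since only finitely many triangle-triangle moves are applied and each is realized in $K$, the resulting $\gamma'$ and $D'$ lie in $K$, $\gamma'$ is geodesic with $\ell(\gamma') = \ell(\gamma)$, and $\gamma'$ satisfies the diagonal condition with respect to $D'$, as required.
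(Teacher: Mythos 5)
Your overall strategy (locate a bad pair, apply the triangle--triangle move there, repeat, and argue termination) is the same as the paper's, but your termination argument contains a genuine gap: the cascade of new bad pairs runs in the opposite direction from the one you claim. After the move at the minimal bad pair $(v_{i-1},v_i)$ you correctly record $\deg_{D'}(v_{i-1})=\deg_D(v_{i-1})-1$; since $\gamma$ is geodesic, two consecutive positive vertices would give a shortening, so in a bad pair $v_{i-1}$ is a zero vertex with $\deg_D(v_{i-1})=4$ (note your ``i.e.\ $\deg_D(v_{i-1})\geq 4$'' has the inequality backwards: not negative means $\deg_D(v_{i-1})\leq 4$). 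Hence after the move $v_{i-1}$ has degree $3$ and becomes a \emph{positive} vertex of $\gamma'$. Whether $(v_{i-2},v_{i-1})$ is now a bad pair depends only on whether $v_{i-2}$ is negative, and minimality of $i$ gives you no control over this: before the move the pair $(v_{i-2},v_{i-1})$ was automatically not bad for the trivial reason that $v_{i-1}$ was then a zero vertex, so $v_{i-2}$ is perfectly free to have degree $4$. In that case the move creates a new bad pair at index $i-1<i$, so your key claim that ``no new bad pair is created at positions $\leq i$'' is false, the asserted contradiction with minimality does not exist, and the monovariant ``the index of the leftmost bad pair strictly increases'' fails. Your fallback monovariant (a weighted sum over bad-pair positions) is named but never verified, and with leftward-propagating bad pairs it is not clear it moves in the direction you want, so as written finiteness is unproven.

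The paper's proof embraces exactly the phenomenon you rule out: resolving the bad pair at $v_i$ makes $v_{i-1}$ positive and may trigger a chain of triangle--triangle moves marching \emph{leftward} through the consecutive zero vertices $v_j$ with $j<i$, and termination comes from bounding the length of each such chain by its starting index, giving the $\sum_{i=1}^{n} i$ bound on the total number of moves --- not from a rightward-advancing front. Two further slips worth fixing: the new disk is $D'=D\setminus st_D(v_i)$, which has two fewer faces than $D$ (your later degree bookkeeping is consistent with this correct $D'$, but your remark that $D'$ has the same area is not); and the reason the new vertex $v'_i$ causes no trouble is that, being interior to the minimal disk $D$, it has degree at least six there, hence degree at least five in $D'$, so it lands in $\gamma'_-$ --- a point your write-up uses implicitly but never states.
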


\begin{proof} The degree of each vertex of a combinatorial geodesic along any disk is at least three. If $v_{i-1}, v_{i} \in \gamma_{+}(D)$ then they define a shortening. Thus only pairs of the form $v_i \in \gamma_{+}(D)$, $v_{i-1} \in \gamma_{0}(D)$ need to be avoided. 

Reading from $v_0$ to $v_n$ along $D$, look for a pair $v_{i-1}$, $v_i$ such that $v_i \in \gamma_{+}(D)$, $v_{i-1} \in \gamma_{0}(D)$. Then $v_i$ lies on a triangle-triangle move in $D$. Apply this move to obtain a new combinatorial geodesic $\gamma' = [v_0,v_1,...,v_{i-1},v'_{i},v_{i+1},...,v_n]$ and $D$ to $D' = D \setminus st_D(v_i)$. Then $deg_{D'}(v'_i) \geq 4$, so $v'_i \in \gamma'_{-}$. Replace $\gamma$ with $\gamma'$ and $D$ with $D'$. Now $v_{i-1} \in \gamma'_{+}$ (see Figure \ref{fig:resolve}). This move might cause a chain of triangle-triangle moves to be applied at each $v_j$ such that  $j < i$ and $v_j \in \gamma_0(D)$. At most resolving each bad pair will require $\sum_{i=1}^n i = \frac{n(n-1)}{2}$ steps. \end{proof}

%%%%%%%%%%%%%%%%%%%%%%%%%%%%%%%%%%

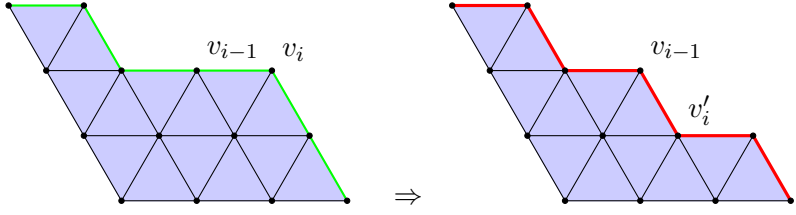
\begin{figure}

\begin{tabular}{ccc}

\begin{tikzpicture}

\coordinate (A1) at (1,0);
\coordinate (A2) at (2,0);
\coordinate (A3) at (3,0);
\coordinate (A4) at (4,0);

\coordinate (B1) at (1-0.5,0.5*3^{0.5});
\coordinate (B2) at (2-0.5,0.5*3^{0.5});
\coordinate (B3) at (3-0.5,0.5*3^{0.5});
\coordinate (B4) at (4-0.5,0.5*3^{0.5});

\coordinate (C1) at (1-1,3^{0.5});
\coordinate (C2) at (2-1,3^{0.5});
\coordinate (C3) at (3-1,3^{0.5});
\coordinate (C4) at (4-1,3^{0.5});

\coordinate (D1) at (1-1.5,1.5*3^{0.5});
\coordinate (D2) at (2-1.5,1.5*3^{0.5});

\filldraw[blue!20] (A1)--(A4)--(C4)--(C2)--(D2)--(D1)--cycle;

\draw(D1)--(A1)--(A4);
\draw(B1)--(B4);
\draw(C1)--(C2);
\draw(C2)--(A2);
\draw(C3)--(A3);
\draw (C1)--(D2);
\draw (B1)--(C2);
\draw(A1)--(C3);
\draw(A2)--(C4);
\draw(A3)--(B4);

\draw[green, thick] (D1)--(D2)--(C2)--(C4)--(A4);

\filldraw (A1) circle (1pt) node [anchor=south west] {$ $};
\filldraw (A2) circle (1pt) node [anchor=south west] {$ $};
\filldraw (A3) circle (1pt) node [anchor=south west] {$ $};
\filldraw (A4) circle (1pt) node [anchor=south west] {$ $};

\filldraw (B1) circle (1pt) node [anchor=south west] {$ $};
\filldraw (B2) circle (1pt) node [anchor=south west] {$ $};
\filldraw (B3) circle (1pt) node [anchor=south west] {$ $};
\filldraw (B4) circle (1pt) node [anchor=south west] {$ $};

\filldraw (C1) circle (1pt) node [anchor=south west] {$ $};
\filldraw (C2) circle (1pt) node [anchor=south west] {$ $};
\filldraw (C3) circle (1pt) node [anchor=south west] {$v_{i-1}$};
\filldraw (C4) circle (1pt) node [anchor=south west] {$v_i$};

\filldraw (D1) circle (1pt) node [anchor=south west] {$ $};
\filldraw (D2) circle (1pt) node [anchor=south west] {$ $};

\end{tikzpicture} & $\Rightarrow$ &

\begin{tikzpicture}

\coordinate (A1) at (1,0);
\coordinate (A2) at (2,0);
\coordinate (A3) at (3,0);
\coordinate (A4) at (4,0);

\coordinate (B1) at (1-0.5,0.5*3^{0.5});
\coordinate (B2) at (2-0.5,0.5*3^{0.5});
\coordinate (B3) at (3-0.5,0.5*3^{0.5});
\coordinate (B4) at (4-0.5,0.5*3^{0.5});

\coordinate (C1) at (1-1,3^{0.5});
\coordinate (C2) at (2-1,3^{0.5});
\coordinate (C3) at (3-1,3^{0.5});
\coordinate (C4) at (4-1,3^{0.5});

\coordinate (D1) at (1-1.5,1.5*3^{0.5});
\coordinate (D2) at (2-1.5,1.5*3^{0.5});

\filldraw[blue!20] (A1)--(A4)--(B4)--(B3)--(C3)--(C2)--(D2)--(D1)--cycle;

\draw(D1)--(A1)--(A4);
\draw(B1)--(B3);
\draw(C1)--(C2);
\draw(C2)--(A2);
\draw(C3)--(A3);
\draw (C1)--(D2);
\draw (B1)--(C2);
\draw(A1)--(C3);
\draw(A2)--(B3);
\draw(A3)--(B4);

\draw[red, very thick] (D1)--(D2)--(C2)--(C3)--(B3)--(B4)--(A4);

\filldraw (A1) circle (1pt) node [anchor=south west] {$ $};
\filldraw (A2) circle (1pt) node [anchor=south west] {$ $};
\filldraw (A3) circle (1pt) node [anchor=south west] {$ $};
\filldraw (A4) circle (1pt) node [anchor=south west] {$ $};

\filldraw (B1) circle (1pt) node [anchor=south west] {$ $};
\filldraw (B2) circle (1pt) node [anchor=south west] {$ $};
\filldraw (B3) circle (1pt) node [anchor=south west] {$v'_i $};
\filldraw (B4) circle (1pt) node [anchor=south west] {$ $};

\filldraw (C1) circle (1pt) node [anchor=south west] {$ $};
\filldraw (C2) circle (1pt) node [anchor=south west] {$ $};
\filldraw (C3) circle (1pt) node [anchor=south west] {$v_{i-1}$};

\filldraw (D1) circle (1pt) node [anchor=south west] {$ $};
\filldraw (D2) circle (1pt) node [anchor=south west] {$ $};

\end{tikzpicture} \\

\end{tabular}

\caption{\label{fig:resolve} Resolving a bad pair in the proof of Lemma \ref{lem:diag1}.}

\end{figure}

%%%%%%%%%%%%%%%%%%%%%%%%%%%%%%

\begin{lem} \label{lem:diag2} Let $\gamma$ be a combinatorial geodesic lying along the boundaries of two minimal, singular disks $D_1$ and $D_2$ such that $\gamma$ satisfies the diagonal condition on $D_1$. Suppose a triangle-triangle move is applied to $\gamma$ on $D_2$ to resolve a bad pair. Then the altered path $\gamma'$ satisfies the diagonal condition on $D'_1$.   \end{lem}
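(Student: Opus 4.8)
The plan is to understand precisely how a triangle-triangle move on $D_2$ changes the vertex degrees along $\gamma$ when viewed from the side of $D_1$, and to check that the diagonal condition cannot be newly violated there. Recall that a triangle-triangle move resolving a bad pair $v_{i-1}\in\gamma_0(D_2)$, $v_i\in\gamma_+(D_2)$ replaces the subpath $[v_{i-1},v_i,v_{i+1}]$ by $[v_{i-1},v'_i,v_{i+1}]$, where $v'_i$ is the fourth vertex of the two triangles of $D_2$ meeting $\gamma$ at $v_i$; concretely $deg_{D_2}(v_i)=3$ and after the move $v'_i$ becomes an interior vertex of $D_2'$ with higher degree. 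The key observation is that this move is local: it only changes $\gamma$ at the single vertex $v_i$, replacing it with $v'_i$, and leaves $v_{i-1}$, $v_{i+1}$ and every other $v_j$ fixed as vertices of $K$. Consequently the only degrees in $D_1$ that can change are $deg_{D_1'}(v_{i-1})$, $deg_{D_1'}(v_{i+1})$, and the degree of the new vertex $v'_i$ on $D_1'$ — but in fact nothing in the interior or boundary structure of $D_1$ is touched by a move carried out on $D_2$, except that the boundary path $\gamma$ now reads $v'_i$ in place of $v_i$, so $D_1$ must be amended near that vertex exactly as in the proof of Proposition \ref{prop:diskunion}.

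First I would invoke Proposition \ref{prop:diskunion} applied to $D_1$ and the single triangle-triangle move's ``swept'' bigon on the other side: after the move, $D_1$ is replaced by a minimal disk $D_1'$ whose boundary contains $\gamma'$, and the Proposition guarantees $deg_{D_1'}(v_j)\ge deg_{D_1}(v_j)$ for every vertex $v_j$ of $\gamma$ that persists (i.e. all $j\ne i$), with $v'_i$ now a boundary vertex of $D_1'$. Since $\gamma$ satisfied the diagonal condition on $D_1$, each $v_j\in\gamma_+(D_1)$ had $v_{j-1}\in\gamma_-(D_1)$; the degree-monotonicity means $v_{j-1}$ stays negative on $D_1'$, so no old bad pair is created — the only thing to check is the two new adjacent pairs $(v_{i-1},v'_i)$ and $(v'_i,v_{i+1})$.

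Next I would analyze those two pairs directly. For the pair $(v'_i,v_{i+1})$: I claim $v'_i$ is not positive on $D_1'$. Indeed $\gamma$ is geodesic, so $deg_{D_1'}(v'_i)\ge 3$; and since the move on $D_2$ turned $v_i$ (degree $3$ there) into an interior vertex $v'_i$ of $D_2'$, together with the fact that in $D_1$ the vertex $v_i$ already had $deg_{D_1}(v_i)\ge 3$ and its two neighbours $x$ on the $D_1$-side are picked up by $v'_i$ as in Proposition \ref{prop:diskunion}, one gets $deg_{D_1'}(v'_i)\ge 4$, i.e. $v'_i\in\gamma'_{-}(D_1')\cup\gamma'_0(D_1')$. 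Hence whatever $v_{i+1}$ is, the pair $(v'_i,v_{i+1})$ is not bad (a bad pair requires the \emph{first} vertex to be non-negative when the second is positive, and here if $v_{i+1}\in\gamma_+$ we need $v'_i\in\gamma_-$, which holds). For the pair $(v_{i-1},v'_i)$: before the move $v_{i-1}\in\gamma_0(D_2)$ but on $D_1$ we know $v_{i-1}$ satisfied the diagonal condition, and $v'_i$ is non-positive by the above, so $v'_i\in\gamma_+$ fails and the pair is automatically good. Finally I would note that the move may trigger a propagating chain of triangle-triangle moves at the $v_j$ with $j<i$, exactly as in Lemma \ref{lem:diag1}; but each such move is again local and increases degrees on $D_1'$ by the same Proposition \ref{prop:diskunion} argument, so the diagonal condition on the $D_1$-side is preserved throughout, and the chain terminates by the $\tfrac{n(n-1)}{2}$ bound of Lemma \ref{lem:diag1}.

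The main obstacle I anticipate is pinning down the interaction bookkeeping in the middle step: making rigorous the claim that a move performed ``on $D_2$'' forces a well-defined minimal modification of $D_1$ with the stated degree monotonicity, rather than hand-waving it. The cleanest route is to apply Proposition \ref{prop:diskunion} to the union $D_1\cup_\gamma D_2$ itself — the single triangle-triangle move on $D_2$ is one of the reduction steps in the proof of that proposition (the degree-four or degree-five resolution), and the proposition already records that the image of each surviving $v\in\gamma$ has $deg_{D'_1}(v)\ge deg_{D_1}(v)$; so the present lemma is essentially a corollary of Proposition \ref{prop:diskunion} once one checks the two freshly-created adjacent pairs, which is the short computation above. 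I would therefore organize the write-up as: (1) reduce to a single move via the chain argument of Lemma \ref{lem:diag1}; (2) quote Proposition \ref{prop:diskunion} for degree monotonicity on $D_1$; (3) verify $v'_i$ is non-positive on $D_1'$; (4) conclude the two new pairs are good and hence $\gamma'$ satisfies the diagonal condition on $D_1'$.
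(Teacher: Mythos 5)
Your outline gets the right shape (reduce to one move, use Proposition \ref{prop:diskunion}, check the freshly created pairs), but step (3) contains a genuine error that removes exactly the case the lemma is really about. After the triangle-triangle move, the disk on the $D_1$ side is $D_1' = D_1 \cup \{[v_{i-1},v_i,v'_i],[v_{i+1},v_i,v'_i]\}$: the swept bigon is glued onto $D_1$, and the new boundary vertex $v'_i$ lies on exactly those two triangles, so $\deg_{D_1'}(v'_i)=3$ and $v'_i$ is \emph{positive} on $D_1'$, not non-positive as you claim. Your argument for $\deg_{D_1'}(v'_i)\ge 4$ conflates this situation with the resolutions inside Proposition \ref{prop:diskunion}: before the move $v'_i$ is an interior vertex of $D_2$ and need not belong to $D_1$ at all, so it does not ``pick up'' any neighbours on the $D_1$ side, and it does not become an interior vertex of $D_2'$ (it becomes a boundary vertex once $st_{D_2}(v_i)$ is removed). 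Consequently the pair $(v_{i-1},v'_i)$ is not ``automatically good''; it is precisely the potentially bad pair, and your proof never addresses it.

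The paper's proof closes this gap by first using Proposition \ref{prop:diskunion} to assume the union $D_1\cup_\gamma D_2$ is minimal, and then counting triangles at $v_{i-1}$: since $v_{i-1}\in\gamma_0(D_2)$ it lies on three triangles of $D_2$, and as an interior vertex of the minimal union it lies on at least six, hence on at least three triangles of $D_1$ and at least four of $D_1'$, so $v_{i-1}\in\gamma'_-(D_1')$ and the pair $(v_{i-1},v'_i)$ satisfies the diagonal condition. With $v'_i$ correctly identified as positive, the pair $(v'_i,v_{i+1})$ is handled by the geodesic condition (two adjacent positive vertices would give a shortening), and the pair $(v_{i+1},v_{i+2})$ by the degree bookkeeping you do have (the move only raises $\deg(v_{i+1})$ and fixes $\deg(v_{i+2})$). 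Your remaining observations (locality of the move, monotonicity of degrees of the surviving vertices, reduction to a single move) are fine, but without the minimality-of-the-union count at $v_{i-1}$ the proof does not go through.
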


\begin{proof}   By Proposition \ref{prop:diskunion}, we may assume that $D_1 \cup D_2$ is minimal. Suppose a triangle-triangle move has been applied to $\gamma$ sent the vertex $v_j$ to the vertex $v_j'$, $\gamma$ to $\gamma' = [v_0,..,v_{j-1},v'_j,v_{j+1},..,v_n]$, $D_2$ to $D'_2 = D_2 \setminus st_{D_2}(v_j)$, and $D_1$ to $D'_1 = D_1 \cup \{[v_{j-1},v_j, v'_j],[v_{j+1},v_j,v'_j]\}$. It suffices to check pairs involving the vertices $v_{j-1}, v'_j, v_{j+1}$ are not bad pairs on $(\gamma',D_1')$.  

If $(v_{j-1},v_{j})$ was a bad pair on $D_2$, then $v_j \in \gamma_+(D_2)$ and $v_{j-1} \in \gamma_0(D_2)$. After the move, $v_j' \in \gamma'_-(D'_2)$ and $v_{j-1} \in \gamma_+(D'_2)$. By minimality, $v_{j-1}$ is contained in at least four triangles in $D'_1$, so $v_{j-1} \in \gamma_-(D'_1)$ and is not a member of a bad pair. Since $v'_j \in \gamma'_+(D'_1)$ and $\gamma'$ is geodesic, $v_{j+1} \in \gamma'_0(D'_1) \cup \gamma'_-(D_1')$, so $(v'_j, v_{j+1})$ is not a bad pair. Finally, suppose $v_{j+1} \in \gamma'_0(D'_1)$ and $v_{j+2} \in \gamma'_+(D'_1)$. The triangle-triangle move preserved the degree of $v_{j+2}$ and reduced the degree of $v_{j+1}$ by one. This implies that $v_{j+1}, v_{j+2} \in \gamma_+(D_1)$, a contradiction to $\gamma$ geodesic. Thus $(v_{j+1}, v_{j+2})$ is also not a bad pair of $\gamma'$ along $D'_1$.

\end{proof}

%%%%%%%%%%%%%%%%%%%%%%%%%%%%%%%%%%%%%%%%%%%%%%%%%

\begin{figure}

\begin{tabular}{cc}

\begin{tikzpicture}[scale=1.125]

\coordinate (A1) at (1,0);
\coordinate (A2) at (2,0);
\coordinate (A3) at (3,0);

\coordinate (B1) at (1-0.5,0.5*3^{0.5});
\coordinate (B2) at (2-0.5,0.5*3^{0.5});
\coordinate (B3) at (3-0.5,0.5*3^{0.5});

\coordinate (C1) at (1-1,3^{0.5});
\coordinate (C2) at (2-1,3^{0.5});
\coordinate (C3) at (3-1,3^{0.5});

\filldraw[blue!10] (-1,-0.5)--(4,-0.5)--(4,3^{0.5}+0.75)--(-1, 3^{0.5}+0.75)--cycle;

\filldraw[blue!20] (-1,-0.5)--(4,-0.5)--(A3)--(C3)--(C1)--(-1,3^{0.5}+0.75)--cycle;

\draw[green, thick] (4,-0.5)--(A3)--(C3)--(C1)--(-1,3^{0.5}+0.75);

\draw (A1)--(A3);
\draw (B1)--(B3);
\draw (B1)--(C2);
\draw (A1)--(C3);
\draw(A2)--(B3);
\draw(A1)--(C1);
\draw(A2)--(C2);

\filldraw (A1) circle (1pt) node [anchor=south west] {$ $};
\filldraw (A2) circle (1pt) node [anchor=south west] {$ $};
\filldraw (A3) circle (1pt) node [anchor=south west] {$v_{j+2}$};

\filldraw (B1) circle (1pt) node [anchor=south west] {$ $};
\filldraw (B2) circle (1pt) node [anchor=south west] {$ $};
\filldraw (B3) circle (1pt) node [anchor=south west] {$v_{j+1}$};

\filldraw (C1) circle (1pt) node [anchor=south west] {$v_{j-2}$};
\filldraw (C2) circle (1pt) node [anchor=south west] {$v_{j-1}$};
\filldraw (C3) circle (1pt) node [anchor=south west] {$v_{j}$};

\draw (3.9, 3^{0.5}+0.65) node [anchor = north east] {$D_1$};
\draw (-0.9, -0.4) node [anchor = south west] {$D_2$};

\end{tikzpicture} &

\begin{tikzpicture}[scale=1.125]

\coordinate (A1) at (1,0);
\coordinate (A2) at (2,0);
\coordinate (A3) at (3,0);

\coordinate (B1) at (1-0.5,0.5*3^{0.5});
\coordinate (B2) at (2-0.5,0.5*3^{0.5});
\coordinate (B3) at (3-0.5,0.5*3^{0.5});

\coordinate (C1) at (1-1,3^{0.5});
\coordinate (C2) at (2-1,3^{0.5});
\coordinate (C3) at (3-1,3^{0.5});

\filldraw[blue!10] (-1,-0.5)--(4,-0.5)--(4,3^{0.5}+0.75)--(-1, 3^{0.5}+0.75)--cycle;

\filldraw[blue!20] (-1,-0.5)--(4,-0.5)--(A3)--(B3)--(B2)--(C2)--(C1)--(-1,3^{0.5}+0.75)--cycle;

\draw[red, very thick] (4,-0.5)--(A3)--(B3)--(B2)--(C2)--(C1)--(-1,3^{0.5}+0.75);

\draw (A1)--(A3);
\draw (B1)--(B2);
\draw (B1)--(C2);
\draw (A1)--(C3);
\draw(A2)--(B3);
\draw(A1)--(C1);
\draw(A2)--(B2);
\draw(B3)--(C3)--(C2);

\filldraw (A1) circle (1pt) node [anchor=south west] {$ $};
\filldraw (A2) circle (1pt) node [anchor=south west] {$ $};
\filldraw (A3) circle (1pt) node [anchor=south west] {$v_{j+2}$};

\filldraw (B1) circle (1pt) node [anchor=south west] {$ $};
\filldraw (B2) circle (1pt) node [anchor= north] {$v'_j $};
\filldraw (B3) circle (1pt) node [anchor=south west] {$v_{j+1}$};

\filldraw (C1) circle (1pt) node [anchor=south west] {$v_{j-2}$};
\filldraw (C2) circle (1pt) node [anchor=south west] {$v_{j-1}$};
\filldraw (C3) circle (1pt) node [anchor=south west] {$ $};

\draw (3.9, 3^{0.5}+0.65) node [anchor = north east] {$D'_1$};
\draw (-0.9, -0.4) node [anchor = south west] {$D'_2$};

\end{tikzpicture} \\

\end{tabular}

\caption{\label{fig:diag2} Disk configuration described in the proof of Lemma \ref{lem:diag2}.}

\end{figure}
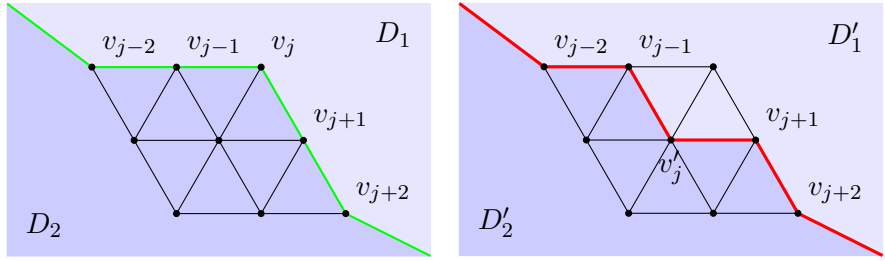
%%%%%%%%%%%%%%%%%%%%%%%%%%%%%%%%%%%%%%%%%%%%%%%%%

\begin{thm}\label{thm:trans} Every pair of vertices in a $\cat(0)$ simplicial $3$-complex $K$ is connected by a GS-geodesic. \end{thm}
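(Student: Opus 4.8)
The plan is to begin with an arbitrary combinatorial geodesic from $u$ to $w$ and repeatedly invoke Lemma \ref{lem:diag1} to clear bad pairs, using Lemma \ref{lem:diag2} to guarantee that arranging the diagonal condition on one disk never destroys it on a disk already treated. First, since $K^{(1)}$ is connected there is a combinatorial geodesic $\gamma$ from $u$ to $w$. Any minimal singular disk $D$ with $\gamma$ along its boundary has its complementary boundary path equal to a combinatorial geodesic from $u$ to $w$: if it were not, a triangle or trivial move supported off $\gamma$ would produce a disk of strictly smaller area still carrying $\gamma$ on its boundary, and moreover such a reduction only lowers the degrees of the vertices of $\gamma$, so — smaller degrees making the diagonal condition strictly harder — the diagonal condition on the reduced disk would imply it on $D$. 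Every combinatorial geodesic from $u$ to $w$ has all its vertices in the combinatorial ball of radius $d_c(u,w)$ about $u$, a finite complex, so there are only finitely many candidate complementary geodesics $\beta_1,\dots,\beta_m$, and it suffices to produce a geodesic that satisfies the diagonal condition on every minimal disk whose complementary boundary path is one of the $\beta_j$.

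Next I would make a single pass over $\beta_1,\dots,\beta_m$. At stage $j$, with current geodesic $\gamma$, choose a minimal disk $D_j$ having $\gamma$ as one boundary path and $\beta_j$ as the other (if the two paths coincide, $D_j$ is degenerate and there is nothing to do). Lemma \ref{lem:diag1} produces a finite sequence of triangle-triangle moves carrying $\gamma$ to a geodesic satisfying the diagonal condition on the resulting modified disk; these moves fix the endpoints $u,w$ and preserve length, so the path remains a combinatorial geodesic throughout. Because each of these moves resolves a bad pair, Lemma \ref{lem:diag2} applies move-by-move and shows that the diagonal condition on every disk handled at an earlier stage is preserved — and the modified earlier disk still has the same complementary path $\beta_i$, so "the disk associated to $\beta_i$'' remains well defined as the pass proceeds. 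After stage $m$ the resulting geodesic $\gamma^\ast$ satisfies the diagonal condition on a minimal disk against each $\beta_j$; combined with the reduction of the previous paragraph — and, when a loop $\gamma^\ast\beta_j^{-1}$ bounds several minimal disks, with Proposition \ref{prop:diskunion} to compare them by passing to a configuration where they glue to a minimal disk — this yields the diagonal condition on every minimal singular disk with $\gamma^\ast$ along its boundary, so $\gamma^\ast$ is the desired GS-geodesic.

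The step I expect to be the main obstacle is the bookkeeping that makes this scheme genuinely terminate and genuinely exhaustive: one must verify that clearing bad pairs against $\beta_{j+1}$ cannot reintroduce one against $\beta_1,\dots,\beta_j$ (this is exactly Lemma \ref{lem:diag2}, but it has to be threaded through the entire sequence of moves supplied by Lemma \ref{lem:diag1}), that a single loop may bound several minimal disks and that the family of minimal disks attached to a geodesic shifts under the moves, and that the whole argument takes place inside a finite subcomplex — here using local finiteness of $K^{(1)}$, as holds in the setting of the Main Theorem. None of these points is deep, but together they are what converts the two diagonal-condition lemmas into a proof. By contrast the termination of the inner loop is immediate: in Lemma \ref{lem:diag1} each triangle-triangle move strips a vertex star of two triangles from the working disk, so its combinatorial area strictly decreases.
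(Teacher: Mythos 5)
Your proposal follows essentially the same route as the paper's proof: start from an arbitrary combinatorial geodesic, repeatedly apply Lemma \ref{lem:diag1} to resolve bad pairs disk by disk, invoke Lemma \ref{lem:diag2} (together with Proposition \ref{prop:diskunion}) to guarantee that disks already treated are not spoiled, and conclude by finiteness of the relevant geodesics/disks. Your bookkeeping --- enumerating the complementary geodesic paths, reducing general minimal disks to those with geodesic complements, and making termination explicit --- is more detailed than the paper's brief argument, but it is the same idea, not a different approach.
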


\begin{proof}Start with any combinatorial geodesic $\gamma$ between $u$ and $w$. Locally $\gamma$ bounds in finitely many spanning disks $D$. If $\gamma$ does not satisfy the diagonal condition on one of these disks, apply Lemma \ref{lem:diag1} to obtain a combinatorial geodesic $\gamma'$ from $u$ to $w$ that satisfies the diagonal condition on $D'$. Replace $\gamma$ with $\gamma'$ and repeat. By Lemma \ref{lem:diag2}, this process does not create bad pairs. There are finitely many combinatorial geodesics from $u$ to $w$, so this process will eventually halt. \end{proof}

\begin{thm}[Fellow Travelling] $\mathcal{P}_{GS}$ satisfies the $(2,1)$-fellow traveller property. \end{thm}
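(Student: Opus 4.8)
### Proof proposal for the Fellow Travelling Theorem

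The plan is to fix two GS-geodesics $\alpha$ and $\beta$ whose endpoints are at distance at most $1$, and to compare them through the minimal singular disk $D$ spanning the loop $\alpha\bar\beta^{-1}$ obtained by joining their endpoints (possibly by one edge at each end). I would first handle the case where the endpoints coincide exactly; the case of distance $1$ reduces to it by attaching the connecting edges to $D$ and tracking how this perturbs vertex degrees by at most $1$. So assume $\alpha,\beta$ share both endpoints $u,w$ and let $D$ be the minimal doubly-based disk between them, with $\alpha$ the old path and $\beta$ the new path.

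The core of the argument is a structural analysis of $D$. Since both $\alpha$ and $\beta$ are combinatorial geodesics, no length-reducing move sits along either boundary arc, so every nondistinguished boundary vertex has degree at least $3$; and since $D$ is minimal, every interior vertex has degree at least $6$ (Theorem~\ref{thm:spanningdisks}). Combinatorial Gau\ss-Bonnet then forces $\sum_{v\in\partial D}(4-\deg_D(v))\ge 6$, and as in Lemma~\ref{lem:+vecount} the positive vertices of $\partial D$ are concentrated near the four ``corners'' $u$ and $w$ (each contributing at most $4-\deg_D \le 2$) — in fact the diagonal condition is exactly what rules out two consecutive positive vertices on either arc being separated only by zero vertices without an intervening negative vertex, which is what would otherwise let the disk be ``wide.'' The key claim to establish is: under the diagonal condition, $D$ contains no interior vertices and is a ``thin strip,'' i.e.\ every interior-of-arc vertex $v_i$ of $\alpha$ satisfies $\deg_D(v_i)\in\{3,4\}$ and the pattern of degrees alternates tightly enough that each vertex of $\alpha$ lies within combinatorial distance $2$ of $\beta$ (and symmetrically). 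Concretely, I would argue that a positive ($\deg=3$) vertex on $\alpha$ is joined by a triangle-triangle configuration directly to a vertex on $\beta$ (distance $\le 2$ via the two triangles), while zero vertices ($\deg=4$) are sandwiched between such vertices; running a left-to-right induction along $\alpha$, each $\alpha(t)$ is then shown to be within distance $2$ of the correspondingly-parametrized point $\beta(t)$, because the two boundary arcs advance in lockstep across the strip of triangles.

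To make the parametrized statement precise I would set up a correspondence between the triangles of $D$ and the ``time'' parameter: reading $D$ as a sequence of triangles glued along interior edges, each interior edge $e_k$ has one endpoint on $\alpha$ at some arc-distance $s_k$ from $u$ and one endpoint on $\beta$ at arc-distance $t_k$ from $u$, and the diagonal/geodesic conditions force $|s_k - t_k|\le 1$ with the $s_k$ and $t_k$ both nondecreasing. Since consecutive interior edges share a triangle, any boundary vertex of $\alpha$ at parameter $t$ lies on an interior edge whose $\beta$-endpoint is at parameter within $1$ of $t$ and within one more edge-step, giving $d(\alpha(t),\beta(t))\le 2$. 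For singular $D$, decompose along cut vertices into nonsingular subdisks; a cut vertex is a common point of $\alpha$ and $\beta$, so the parameters agree there and the estimate on each piece glues to a global $(2,1)$-fellow-travelling bound.

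The main obstacle I anticipate is proving the ``thin strip'' claim rigorously — specifically, ruling out interior vertices of $D$ and controlling the degree pattern along the arcs purely from the diagonal condition plus Gau\ss-Bonnet. The delicate point is that the diagonal condition is a condition on \emph{every} minimal disk with $\gamma$ on its boundary, so I can invoke it for $D$ itself; combined with Proposition~\ref{prop:diskunion}-style degree-monotonicity when resolving degree-$4$ and degree-$5$ interior vertices, one should be able to push any offending interior vertex to the boundary without destroying minimality or the diagonal condition, deriving a contradiction with minimality — but bookkeeping the case analysis (degree $3$, $4$, $5$ at a would-be interior vertex, interacting with which arc it is pushed toward) is where the real work lies, and it is essentially the $\cat(0)$-$3$-complex analogue of the corresponding step in Gersten–Short.
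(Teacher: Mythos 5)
Your argument stands or falls on the ``thin strip'' claim --- that the minimal disk $D$ between the two GS-geodesics has no interior vertices and boundary-arc degrees confined to $\{3,4\}$ --- and you do not prove it; you flag it yourself as the main obstacle, but without it the parametrized correspondence $|s_k-t_k|\le 1$ has nothing to rest on. Worse, the claim is false in the form you need. Combinatorial Gau\ss-Bonnet together with the bound $\sum_{i=1}^{n-1}(4-\deg_D(v_i))\le 1$ along a geodesic arc only forces interior vertices of a minimal $D$ to have degree at least six; in the common-endpoint nonsingular case it forces them to have degree exactly six (so they may still exist), and in the distance-one case, where the four corners can each contribute up to $+2$, it excludes nothing. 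Indeed the extremal configuration in the paper's own proof (Figure \ref{fig:hexcase}) is a minimal disk between two GS-geodesics with endpoints one apart whose two ends each contain an interior vertex of degree six adjacent to $\gamma(0),\gamma(1),\delta(0),\delta(1)$: the diagonal condition does not flatten $D$ to an interior-vertex-free strip, it only controls where chain shortenings can sit (forcing them onto the connecting edges $a$ and $b$). Your proposed remedy --- pushing offending interior vertices to the boundary ``\`a la'' Proposition \ref{prop:diskunion} --- does not apply either: that proposition resolves degree-$4$ and degree-$5$ interior vertices arising in a \emph{non-minimal} union of two minimal disks, whereas here $D$ is already minimal and its interior vertices have degree at least six, so there is nothing to resolve and no contradiction to extract. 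Your opening reduction is also unjustified: endpoints at distance one cannot be reduced to common endpoints by ``attaching the connecting edges,'' since extending a GS-geodesic by such an edge need not yield a geodesic, let alone a GS-geodesic, and the distance-one case is precisely where the content lies.

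For comparison, the paper never establishes any global structure of $D$. It inducts on the length of the boundary loop $\alpha$ formed by $\gamma$, $\delta^{-1}$ and the edges $a,b$: loops of length at most five are handled by Proposition \ref{prop:noemptytrisqpent}; an endpoint of degree at most two is peeled off and the induction hypothesis applied; otherwise Lemma \ref{lem:+vecount} produces at least six chain shortenings, which, because $\gamma$ and $\delta$ are GS-geodesics, must have length one and meet $a$ or $b$, pinning down the end configuration of Figure \ref{fig:hexcase} with $\deg_D(\gamma(0))=\deg_D(\gamma(1))=\deg_D(\delta(0))=3$; a single triangle-triangle move near that end then produces a path $\beta'$ to which the earlier case applies, and $d(\gamma(t),\delta(t))\le 2$ follows since $\beta'(t)=\gamma(t)$ for $t>1$. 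If you want to salvage your approach you would need to weaken ``no interior vertices'' to something that tolerates the wheels of Figure \ref{fig:hexcase}, at which point you are essentially reconstructing that induction.
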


\begin{figure}

\begin{center}

\begin{tikzpicture}[scale=1.25]

\coordinate (O1) at (0,0);
\coordinate (A1) at (0.5*3^{0.5},0.5);
\coordinate (B1) at (0,1);
\coordinate (C1) at (-0.5*3^{0.5},0.5);
\coordinate (D1) at (-0.5*3^{0.5},-0.5);
\coordinate (E1) at (0,-1);
\coordinate (F1) at (0.5*3^{0.5},-0.5);

\coordinate (O2) at (6,0);
\coordinate (A2) at (6+0.5*3^{0.5},0.5);
\coordinate (B2) at (6,1);
\coordinate (C2) at (6-0.5*3^{0.5},0.5);
\coordinate (D2) at (6-0.5*3^{0.5},-0.5);
\coordinate (E2) at (6,-1);
\coordinate (F2) at (6+0.5*3^{0.5},-0.5);

\draw (-0.5*3^{0.5},0) node [anchor = east] {$a$};
\draw (6+0.5*3^{0.5},0) node [anchor=west] {$b$};

\filldraw[blue!20] (A1)--(B1)--(C1)--(D1)--(E1)--(F1)--(D2)--(E2)--(F2)--(A2)--(B2)--(C2)--cycle;

\draw (O1)--(B1)--(C1)--(D1)--(E1)--cycle;
\draw (O2)--(B2)--(A2)--(F2)--(E2)--cycle;

\draw[dashed] (O1)--(A1);
\draw (O1)--(C1);
\draw (O1)--(D1);
\draw[dashed] (O1)--(F1);

\draw[dashed] (O2)--(C2);
\draw (O2)--(A2);
\draw (O2)--(F2);
\draw[dashed] (O2)--(D2);

\draw[dashed] (B1)--(A1)--(C2)--(B2);
\draw[dashed] (E1)--(F1)--(D2)--(E2);

\filldraw (O1) circle (1pt) node [anchor=south west] {$ $};
\filldraw (A1) circle (1pt) node [anchor=south west] {$ $};
\filldraw (B1) circle (1pt) node [anchor=south] {$\delta(1)$};
\filldraw (C1) circle (1pt) node [anchor=south east] {$\delta(0)$};
\filldraw (D1) circle (1pt) node [anchor=north east] {$\gamma(0)$};
\filldraw (E1) circle (1pt) node [anchor=north] {$\gamma(1)$};
\filldraw (F1) circle (1pt) node [anchor=north west] {$ $};

\filldraw (O2) circle (1pt) node [anchor=south west] {$ $};
\filldraw (A2) circle (1pt) node [anchor=south west] {$\delta(m)$};
\filldraw (B2) circle (1pt) node [anchor=south] {$\delta(m-1)$};
\filldraw (C2) circle (1pt) node [anchor=south east] {$ $};
\filldraw (D2) circle (1pt) node [anchor=north east] {$ $};
\filldraw (E2) circle (1pt) node [anchor=north] {$\gamma(n-1)$};
\filldraw (F2) circle (1pt) node [anchor=north west] {$\gamma(n)$};

\end{tikzpicture}

\end{center}

\caption{The configuration when $\alpha$ contains six chains. \label{fig:hexcase}}

\end{figure}
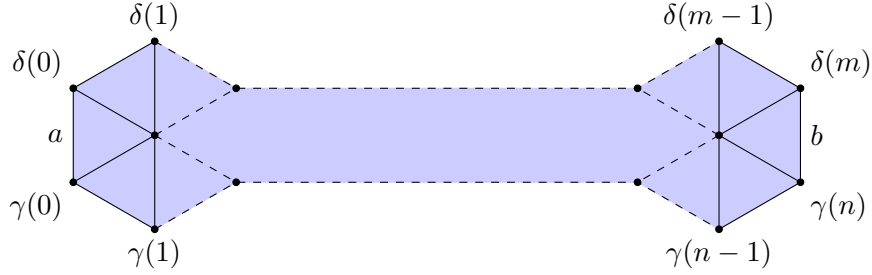

\begin{proof} Let $\delta$ and $\gamma$ be paths in  $\mathcal{P}_{GS}$ on with endpoints at most distance one apart. Let $\alpha$ be the closed combinatorial path consisting of $\gamma$ followed by $\delta^{-1}$, together with the possible edges separating their endpoints. Let $D$ be the minimal disk spanning $\alpha$. The proof follows by induction on the length of $\alpha$.

If $\ell(\alpha) \leq 5$, then the path distance between $\delta$ and $\gamma$ is at most two by the no empty triangle, square, and pentagon condition. Suppose the induction hypothesis holds for pairs of GS-geodesics defining boundary paths of length less than $n$, and $\ell(\alpha) = n$. If $deg_{D}(\gamma(0))\leq 2$, then $\gamma(0)$ lies on an edge or a single triangle in $D$ and $d(\gamma(1),\delta(0))\leq 1$. Then the boundary path defined by $\gamma' = [\gamma(1),...,\gamma(m)]$ and $\delta$ satisfy the induction hypothesis. Thus the path distance $d(\gamma,\delta) = d(\gamma',\delta) \leq 2$. The same argument applies to each endpoint of $\gamma$ and $\delta$.

Suppose each endpoints of $\gamma$ and $\delta$ has degree at least three on $D$. Then $\alpha$ contains no vertices of degree two. By Lemma \ref{lem:+vecount}, $\alpha$ contains at least six move positive vertices than negative vertices. Thus $\alpha$ contains at least six chain shortenings. Both $\gamma$ and $\delta$ are $GS$-geodesics, so each chain intersects either the edge $a$ or the edge $b$, and  has length one by the diagonal condition (see Figure \ref{fig:hexcase}). Thus $d_C(\gamma(1), \delta(1)) = 2$ and  $deg_D(\delta(0)) = deg_D(\gamma(0)) = deg_D(\gamma(1)) = 3$. Let $\beta = [\delta(0), \gamma(0), \gamma(1),...,\gamma(m)]$. Then $\beta$ is a non geodesic path containing the move $[\delta(0), \gamma(0), \gamma(1), \gamma(2)]$. Take $\beta' $ to be the image of $\beta$ after applying the move. By the previous case, $d(\beta'(t),\delta(t)) \leq 2$ for all $t$.  For $t >1$, $\gamma(t) = \beta'(t)$. Thus $d(\gamma(t),\delta(t))\leq 2$ for all $t$.
\end{proof}

%%%%%%%%%%%%%%%%%%%%%%%%%%%%%%%%%%%%%%%%%%%%%%%%

\section{Biautomaticity of Proper Actions}\label{sec:biautomaticity}

%%%%%%%%%%%%%%%%%%%%%%%%%%%%%%%%%%%%%%%%%%%%%%%%

This section is devoted to the proof of Main Theorem \ref{main:bi}. Let $G$ be  a group acting geometrically by isometries on a $\cat(0)$ simplicial $3$-complex $K$. 

Choose a distinguished vertex $v_0$ of $K$ and restrict $\mathcal{P}_{GS}$ to paths between vertices of the orbit $G_{v_0}$ to produce the path system $\mathcal{P}_{GS,v_0}$. The action of $G$ on $K$ induces an action on the $1$-skeleton of $K$. The action preserves the length of combinatorial paths, and the degree of vertices. Thus it sends $\mathcal{P}_{GS,v_0}$ to $\mathcal{P}_{GS,v_0}$, and $\mathcal{P}_{GS,v_0}$ is $G$-invariant. The fellow traveller property still holds when we restrict ourselves to paths in the orbit of $v_0$. All that remains to prove our main theorem is to show is path system is regular.

\begin{prop} $\mathcal{P}_{GS,v_0}$ is regular. \end{prop}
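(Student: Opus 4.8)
The plan is to realize $\mathcal{P}_{GS,v_0}$ as a Boolean combination of regular path systems in $\Gamma = K^{(1)}$ and then invoke the closure properties of Theorem \ref{thm:combreg}. By Proposition \ref{prop:combfft} every non-geodesic combinatorial path of $\Gamma$ is $(1,0)$-fellow traveled by a strictly shorter path, so the Neumann--Shapiro criterion (Theorem \ref{thm:fft}) shows that the geodesic path system $\mathcal{P}_{geo}$ of $\Gamma$ is regular; this will be the first ingredient.

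The main step is a purely local reformulation of the diagonal condition, which I would prove as follows: a combinatorial geodesic $\gamma=[v_0,\dots,v_n]$ is a GS-geodesic \emph{if and only if} for each $i$ with $1\le i\le n-1$ there is no vertex $x$ of $K$ for which both $\{v_{i-1},v_i,x\}$ and $\{v_i,v_{i+1},x\}$ span $2$-simplices of $K$. For the forward direction, if such an $x$ exists then, since $\gamma$ is a geodesic, $v_{i-1}$ and $v_{i+1}$ span no edge and $x\notin\gamma^{(0)}$, so the path $\delta$ obtained from $\gamma$ by replacing $v_i$ with $x$ is again a geodesic and the minimal disk $D$ spanning $\gamma\delta^{-1}$ consists of the two triangles $\{v_{i-1},v_i,x\}$ and $\{v_i,v_{i+1},x\}$ (minimality uses that there are no empty squares, Proposition \ref{prop:noemptytrisqpent}); on $D$ one computes $\deg_D(v_i)=3$ and $\deg_D(v_{i-1})=2$, so $v_i\in\gamma_+(D)$ while $v_{i-1}\notin\gamma_-(D)$ and the diagonal condition fails on $D$. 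Conversely, if $D$ is any minimal singular disk with $\gamma$ on its boundary and a bad pair at $i$, then $\gamma$ being a geodesic forbids trivial and triangle moves along it, so $\deg_D(v_i)\ge 3$, and $v_i\in\gamma_+(D)$ forces $\deg_D(v_i)=3$; hence $v_i$ lies on exactly two triangles of $D$, which must be $\{v_{i-1},v_i,x\}$ and $\{v_i,v_{i+1},x\}$ for $x$ the unique neighbor of $v_i$ in $D$ other than $v_{i-1}$ and $v_{i+1}$, and these are $2$-simplices of $K$.

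Granting the reformulation, $\mathcal{P}_{GS}=\mathcal{P}_{geo}\cap\mathcal{L}$, where $\mathcal{L}$ is the set of combinatorial paths no interior vertex of which admits such an $x$. Since $G$ acts cocompactly on $K$ there are only finitely many $G$-orbits of ``corners'' $(v_{i-1}v_i,\;v_i,\;v_iv_{i+1})$, and whether a corner admits the offending $x$ depends only on its orbit, so a finite state automaton that remembers the $G$-type of the last traversed edge recognizes $\mathcal{L}$; thus $\mathcal{L}$ is regular and $\mathcal{P}_{GS}=\mathcal{P}_{geo}\cap\mathcal{L}$ is regular by Theorem \ref{thm:combreg}. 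Finally $\mathcal{P}_{GS,v_0}=\mathcal{P}_{GS}\cap\mathcal{O}$, with $\mathcal{O}$ the set of combinatorial paths both of whose endpoints lie in $G(v_0)$; cocompactness again gives finitely many vertex orbits, membership in $G(v_0)$ depends only on a vertex's orbit, so $\mathcal{O}$ is regular and one more application of Theorem \ref{thm:combreg} finishes the proof.

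The step I expect to require the most care is the local reformulation, and within it the backward direction: one must verify that on a minimal disk a positive vertex of a geodesic cannot have degree $2$ (this is exactly where ``no length-reducing move survives along a geodesic'' is used) and that its two incident triangles are precisely those meeting the path at its neighbors $v_{i-1}$ and $v_{i+1}$; one should also confirm that the harmless degenerate case $x\in\gamma^{(0)}$ is excluded by geodesy. Everything after that is routine bookkeeping with regular path systems.
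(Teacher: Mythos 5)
Your proposal follows essentially the same route as the paper: the geodesic path system is shown regular via Proposition \ref{prop:combfft} and Theorem \ref{thm:fft}, and $\mathcal{P}_{GS,v_0}$ is then carved out by removing, through finitely many automata corresponding to the finitely many $G$-orbits of local bad-pair configurations (finiteness from cocompactness), the geodesics containing such a configuration, finishing with the closure properties of Theorem \ref{thm:combreg}. Your explicit two-triangle characterization of a bad pair via the apex vertex $x$ is precisely the local detectability the paper uses implicitly when it asserts there are only finitely many subpaths $\beta=[v_0,v_1,v_2,v_3]$ with $v_2\in\beta_+$ and $v_1\in\beta_0$, so your argument is a correct (indeed slightly more detailed) version of the paper's proof.
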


\begin{proof} The falsification by fellow traveller property still holds when restricted to paths between vertices of $G_{v_0}$, so the path system of $\mathcal{G}_{v_0}$ of combinatorial geodesics between vertices of $G_{v_0}$ is regular by Proposition \ref{prop:combfft} and Theorem \ref{thm:fft}. The action of $G$ on $K$ is geometric, so the quotient space $K/G$ is compact and thus finite.  There finitely many paths $\beta = [v_0,v_1,v_2,v_3]$ such that $v_2 \in \beta_{+}$ and $v_1 \in \beta _0$. Let $S$ be that set of such subpaths $\beta$. For each $\beta \in S$, build an FSA $M_{\beta}$ that accepts the path $\epsilon$ if and only if $\epsilon$ contains a lift of $\beta$ as a subpath. Then the path system $\mathcal{S}_{\beta}$ of paths containing a lift of $\beta \in S$ is accepted by $M_{\beta}$ and thus regular. This implies that $\mathcal{P}_{GS,v_0} = \mathcal{G}_{v_0} \setminus (\bigcup_{\beta \in S} M_{\beta})$ is regular. \end{proof}

\renewcommand{\themain}{\ref{main:bi}}
\begin{main}Groups acting properly on $\cat(0)$ simplicial $3$-complexes are biautomatic.
\end{main}
\renewcommand{\themain}{\Alph{main}}

%%%%%%%%%%%%%%%%%%%%%%%%%%%%%%%%%%%%%%%%%%%%%%%%%%%%%%%%%%%%%

\section{Higher Dimensions}\label{sec:highdim}

%%%%%%%%%%%%%%%%%%%%%%%%%%%%%%%%%%%%%%%%%%%%%%%%%%%%%%%%%%%%%

Unfortunately this technique can not be directly generalized to higher dimensional simplicial complexes. The main stumbling block is trying to generalize Crowley's minimal spanning disk theorem to higher dimensions. While simplicial $n$-complexes for $n\geq4$ satisfy the no empty triangle and no empty square condition, they might have empty pentagons.

%%%%%%%%%%%%%%%%%%%%

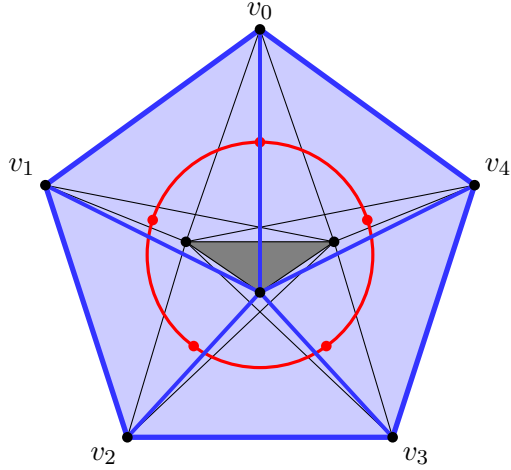
\begin{figure}

\begin{tikzpicture}

\coordinate (O) at (0,0);
\coordinate (P0) at (90: 3);
\coordinate (P1) at (90+72: 3);
\coordinate (P2) at (90+2*72: 3);
\coordinate (P3) at (90+3*72: 3);
\coordinate (P4) at (90+4*72: 3);

\coordinate (T0) at (270 + 100: 1);
\coordinate (T1) at (270 - 100: 1);
\coordinate (T2) at (270: 0.5);

\coordinate (L0) at (90: 1.5);
\coordinate (L1) at (90+72: 1.5);
\coordinate (L2) at (90+2*72: 1.5);
\coordinate (L3) at (90+3*72: 1.5);
\coordinate (L4) at (90+4*72: 1.5);

\filldraw[blue!20] (P0)--(P1)--(P2)--(P3)--(P4)--cycle;

\draw[blue!80, line width=2 pt] (P0)--(P1)--(P2)--(P3)--(P4)--cycle;

\filldraw[black!50] (T0)--(T1)--(T2)--cycle;

\draw (T0)--(T1)--(T2)--cycle;

\draw (T0)--(P0);
\draw (T0)--(P1);
\draw (T0)--(P2);
\draw (T0)--(P3);
\draw (T0)--(P4);

\draw (T1)--(P0);
\draw (T1)--(P1);
\draw (T1)--(P2);
\draw (T1)--(P3);
\draw (T1)--(P4);

\draw[red, very thick] (O) circle (1.5);

\fill[red] (L0) circle (2pt);
\fill[red] (L1) circle (2pt);
\fill[red] (L2) circle (2pt);
\fill[red] (L3) circle (2pt);
\fill[red] (L4) circle (2pt);

\draw[blue!80, line width=1.5 pt] (T2)--(P0);
\draw[blue!80, line width=1.5 pt] (T2)--(P1);
\draw[blue!80, line width=1.5 pt] (T2)--(P2);
\draw[blue!80, line width=1.5 pt] (T2)--(P3);
\draw[blue!80, line width=1.5 pt] (T2)--(P4);

\fill[black] (P0) circle (2pt) node [anchor=south] {$v_0$};
\fill[black] (P1) circle (2pt) node [anchor=south east] {$v_1$};
\fill[black] (P2) circle (2pt) node [anchor=north east] {$v_2$};
\fill[black] (P3) circle (2pt) node [anchor=north west] {$v_3$};
\fill[black] (P4) circle (2pt) node [anchor=south west] {$v_4$};

\fill[black] (T0) circle (2pt);
\fill[black] (T1) circle (2pt);
\fill[black] (T2) circle (2pt);

\end{tikzpicture}

\caption{The complex described in Example \ref{exmp:baddisk} for $n=4$. \label{fig:baddisk}}

\end{figure}

%%%%%%%%%%%%%%%%%%%%

\begin{exmp}[High Dimensional Empty Pentagon]\label{exmp:baddisk} Let $\sigma$ be an $(n-2)$-dimensional simplex for $n \geq 4$, and take $K$ to be the join of  $\sigma$ together with closed cycle of five distinct vertices $\alpha = [v_0,v_2,...,v_5=v_0]$. Then $K$ is the union of five distinct $n$-simplices arranged cyclically around $\sigma$ (see Figure \ref{fig:baddisk}).  The curvature of $K$ depends only on $lk_K(\sigma)$. The link is a regular metric graph consisting of a single cycle $c$ with five edges. Each edge has length $\arccos(\frac{1}{n})$, giving $\ell(c) = 5\arccos(\frac{1}{n}) > 2\pi$ for $n \geq 4$.  $K$ is also simply connected, so $K$ is $\cat(0)$.  There are $n-1$ minimal disks spanning $\alpha$ in $K$, one through each of the vertices of $\sigma$. Each disk is an example of a full subcomplex that is positively curved; the disks consist of five triangles around a interior vertex. \end{exmp} 

\noindent Excluding configurations like those described in Example \ref{exmp:baddisk} give complexes which fall under Januszkiewicz and \'{S}wi\c{a}tkowski's theory of simplicial nonpositive curvature \cite{JaSw06}. The loss of $\cat(0)$ spanning disks eliminates many of the tools used to prove the Main Theorem, such as Corollary \ref{cor:bdcount}. 

%%%%%%%%%%%%%%%%%%%%

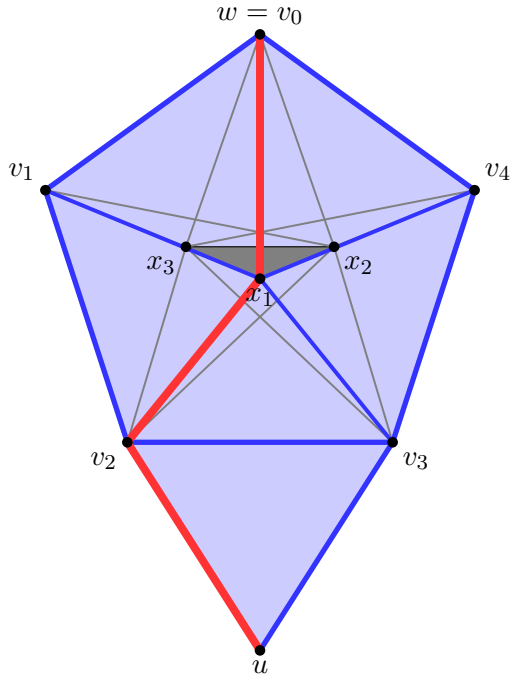
\begin{figure}

\begin{tikzpicture}

\coordinate (O) at (0,0);
\coordinate (P0) at (90: 3);
\coordinate (P1) at (90+72: 3);
\coordinate (P2) at (90+2*72: 3);
\coordinate (P3) at (90+3*72: 3);
\coordinate (P4) at (90+4*72: 3);
\coordinate (P5) at (0, -3*1.73205081);

\coordinate (T0) at (270 + 100: 1);
\coordinate (T1) at (270 - 100: 1);
\coordinate (T2) at (270: 0.25);

\coordinate (L0) at (90: 1.5);
\coordinate (L1) at (90+72: 1.5);
\coordinate (L2) at (90+2*72: 1.5);
\coordinate (L3) at (90+3*72: 1.5);
\coordinate (L4) at (90+4*72: 1.5);

\filldraw[blue!20] (P0)--(P1)--(P2)--(P5)--(P3)--(P4)--cycle;

\draw[blue!80, line width=2 pt] (P0)--(P1)--(P2)--(P3)--(P4)--cycle;

\draw[blue!80, line width=2 pt] (P2)--(P5)--(P3);

\filldraw[black!50] (T0)--(T1)--(T2)--cycle;

\draw (T0)--(T1)--(T2)--cycle;

\draw[black!50, line width=0.75 pt] (T0)--(P0);
\draw[black!50, line width=0.75 pt] (T0)--(P1);
\draw[black!50, line width=0.75 pt] (T0)--(P2);
\draw[black!50, line width=0.75 pt] (T0)--(P3);
\draw[black!50, line width=0.75 pt] (T0)--(P4);

\draw[black!50, line width=0.75 pt] (T1)--(P0);
\draw[black!50, line width=0.75 pt] (T1)--(P1);
\draw[black!50, line width=0.75 pt] (T1)--(P2);
\draw[black!50, line width=0.75 pt] (T1)--(P3);
\draw[black!50, line width=0.75 pt] (T1)--(P4);

\draw[blue!80, line width=1.5 pt] (T2)--(P0);
\draw[blue!80, line width=1.5 pt] (T2)--(P1);
\draw[blue!80, line width=1.5 pt] (T2)--(P2);
\draw[blue!80, line width=1.5 pt] (T2)--(P3);
\draw[blue!80, line width=1.5 pt] (T2)--(P4);

\draw[red!80, line width=3 pt] (P5)--(P2)--(T2)--(P0);

\fill[black] (P0) circle (2pt) node [anchor=south] {$w = v_0$};
\fill[black] (P1) circle (2pt) node [anchor = south east] {$v_1$};
\fill[black] (P2) circle (2pt) node [anchor = north east] {$v_2$};
\fill[black] (P3) circle (2pt)node [anchor = north west] {$v_3$};
\fill[black] (P4) circle (2pt)node [anchor = south west] {$v_4$};
\fill[black] (P5) circle (2pt)node [anchor=north] {$u$};

\fill[black] (T0) circle (2pt) node [anchor = north west] {$x_2$};
\fill[black] (T1) circle (2pt)node [anchor = north east] {$x_3$};
\fill[black] (T2) circle (2pt) node [anchor = north] {$x_1$};

\end{tikzpicture}

\caption{The vertices $x_1$ and $y_2$ form a bad pair on the combinatorial geodesic $\gamma = [u, x_1, y_2, v]$. Take $D$ to be a disk containing the triangles spanned by $\{u, v_2, v_3\}$, $\{v_2, v_3, x_3\}$, $\{v_2, x_1, x_3\}$, and $\{x_3, x_1, v\}$. Then $y_2 \in \gamma_+(D)$, but $x_1 \in \gamma_0(D)$.\label{fig:narwahl1}}

\end{figure}

%%%%%%%%%%%%%%%%%%%%

 Without $\cat(0)$ spanning disks it is unclear how to generalize GS-geodesics to higher dimensions. Consider the $\cat(0)$ $n$-complex $K$ obtained by attaching a triangle along the lower edge of the high dimensional empty pentagon in Example \ref{exmp:baddisk} (see Figure \ref{fig:narwahl1}). There are $2n$ combinatorial geodesics of length four from $u$  to $v$ in $K$. The middle two vertices of each path form a bad pair.

\bibliography{researchbib}

\bibliographystyle{plain}%{unsrt}%{alpha}{siam}{apalike}{ieeetr}{plain}{acm}{abbrv}

\end{document}